\DeclareMathOperator{\Tr}{Tr}
\newcommand*{\Trace}[3][\pm]{\Tr^{#1}(#2, #3)}
\newcommand*{\Trp}[2]{\Trace[i]{#1}{#2}}
\newcommand*{\Trm}[2]{\Trace[e]{#1}{#2}}
\newcommand*{\Trie}[2]{\Trace[i,e]{#1}{#2}}
\newcommand*{\chiut}[1][]{\chi^{#1}_{\{u > t\}}}
\newcommand*{\jump}[1]{\Theta_{#1}}
\def\nuint{\widetilde{\nu}}
\newcommand{\A}{\boldsymbol{A}}
\newcommand{\Cyl}[2]{\left[\!\left[ #1 \cdot #2 \right]\!\right]}
\def\DM{{\mathcal{DM}^{\infty}}}
\newcommand*{\BVA}[1][\Omega]{BV({#1}) \cap L^1(#1, |\Div\A|)}
\newcommand*{\BVAlocloc}[1][\Omega]{BV_{\rm loc}({#1})\cap L^1_{\rm loc}(#1, |\Div\A|)}
\newcommand*{\DMlocloc}[1][\Omega]{\mathcal{DM}^{\infty}_{{\rm loc}}{(#1)}}
\newcommand*{\Ixr}[1][x,r]{I^{#1}}
\newcommand{\mean}[1]{\,-\hskip-1.08em\int_{#1}} %media integrale displayed
\newcommand{\Leb}[1]{\mathcal{L}^{#1}} % Misura di Lebesgue
\def\bbbr{\mathbb{R}}
\def\R{\mathbb{R}}
\def\N{\mathbb{N}}
\def\cinftio{C^{\infty}_c(\Omega)}
\def\intom#1{\int_{\Omega} #1\,  dx}
\DeclareMathOperator{\diver}{div}
\DeclareMathOperator{\Div}{div}
\newcommand{\medint}{-\kern  -,375cm\int}
\newcommand{\medintinrigo}{-\kern  -,315cm\int}
 \newcommand{\hh}{{\mathcal H}^{N-1}}
\newcommand{\LLN}{{\mathcal L}^N}
\newcommand{\LLU}{{\mathcal L}^1}
\newcommand{\Haus}[1]{{\mathcal H}^{#1}} % Misura di Hausdorff
\newcommand{\res}{\mathop{\hbox{\vrule height 7pt width .5pt depth 0pt
\vrule height .5pt width 6pt depth 0pt}}\nolimits} % macro per la restrizione
\def\pscal#1#2{\left\langle #1,\, #2 \right\rangle}
\DeclareMathOperator{\supp}{supp}
\DeclareMathOperator{\Tan}{Tan}
\def\ut{\widetilde{u}}
\def\polar{\theta}
\renewcommand{\prec}[2][*]{#2^{#1}}
\def\intA#1{\int_{\Omega} #1\,  d(\Div\A)}
\def\pair#1{\left(#1\right)}
\def\upiu{u^+}
\def\umeno{u^-}
\def\uint{{u^{i}}}
\def\uext{{u^{e}}}
\def\radon{\mathcal{M}(\Omega)}
\long\def\taglio#1{}
\newtheorem{definition}{Definition}[section]
\newtheorem{lemma}[definition]{Lemma}
\newtheorem{theorem}[definition]{Theorem}
\newtheorem{proposition}[definition]{Proposition}
\newtheorem{corollary}[definition]{Corollary}
\theoremstyle{remark}
\newtheorem{remark}[definition]{Remark}
\newtheorem{example}[definition]{Example}
\def\@settitle{\begin{center}%
		\baselineskip14\p@\relax
		\bfseries
		\uppercasenonmath\@title
		\@title
		\ifx\@subtitle\@empty\else
		%\\[1ex]\uppercasenonmath\@subtitle
		%\footnotesize\mdseries\@subtitle
		\\[5ex]%\@subtitle
		\normalsize\mdseries\@subtitle
		\fi
	\end{center}%
}
\def\subtitle#1{\gdef\@subtitle{#1}}
\def\@subtitle{}
\definecolor{grey}{rgb}{.7,.7,.7}
\definecolor{evidG}{rgb}{0,0.5,0}
\begin{document}
\title[Representation formulas for pairings]{Representation formulas for pairings between divergence-measure fields and $BV$ functions}

\author[G.E.~Comi]{Giovanni E.~Comi}
\address{Dipartimento di Matematica, Università di Bologna, Piazza di Porta San Donato 5, 40126 Bologna (BO), Italy}
\email{giovannieugenio.comi@unibo.it}
\author[G.~Crasta]{Graziano Crasta}
\address{Dipartimento di Matematica ``G.\ Castelnuovo'', Sapienza Università di Roma\\
	P.le A.\ Moro 5 -- I-00185 Roma (Italy)}
\email{graziano.crasta@uniroma1.it}
\author[V.~De Cicco]{Virginia De Cicco}
\address{Dipartimento di Scienze di Base  e Applicate per l'Ingegneria, Sapienza Università di Roma\\
	Via A.\ Scarpa 10 -- I-00185 Roma (Italy)}
\email{virginia.decicco@uniroma1.it}
\author[A.~Malusa]{Annalisa Malusa}
\address{Dipartimento di Matematica ``G.\ Castelnuovo'', Sapienza Università di Roma\\
	P.le A.\ Moro 5 -- I-00185 Roma (Italy)}
\email{annalisa.malusa@uniroma1.it}

\thanks{\textit{Acknowledgments}. 
The authors are members of  the Istituto Nazionale di Alta Matematica (INdAM), Gruppo Nazionale per l'Analisi Matematica, la Probabilità e le loro Applicazioni (GNAMPA), and are partially supported by the INdAM--GNAMPA 2022 Project \textit{Alcuni problemi associati a funzionali integrali: riscoperta strumenti classici e nuovi sviluppi}, codice CUP\_E55F22000270001.
Part of this work was undertaken while the first author was visiting the Mathematics Department Guido Castelnuovo and the SBAI Department at the Sapienza University of Rome. He would like to thank these institutions for the support and warm hospitality during the visits.
}

\keywords{Divergence-measure fields, functions of bounded variation, coarea formula, Gauss--Green formula, normal traces}
\subjclass[2010]{26B30,26B20,49Q15}
% 26B30 Absolutely continuous functions, functions of bounded variation
% 26B20 Integral formulas of real functions of several variables (Stokes, Gauss, Green, etc.)
% 49Q15 Geometric measure and integration theory, integral and normal currents
% 49J45 Methods involving semicontinuity and convergence; relaxation
\date{\today}

\begin{abstract}
The purpose of this paper is to find pointwise representation formulas for 
the density of the pairing between 
divergence-measure fields and BV functions, in this way continuing the research started in \cite{CD3, CDM}. In particular, we extend a representation formula from an unpublished paper of Anzellotti \cite{Anz2} involving the limit of cylindrical averages for normal traces, and we exploit a result of \cite{Silh} in order to derive another representation in terms of limits of averages in half balls.
\end{abstract}

\maketitle

%\tableofcontents

\section{Introduction}
Starting from the pioneering papers \cite{Anz, ChenFrid, Silh}, 
the research community has shown a growing interest in the pairing
theory between divergence-measure fields (i.e. vector fields whose weak divergence is a Radon measure) and $BV$ functions, 
fostered by its application in several contests.
We mention, among others,
hyperbolic conservation and balance laws with discontinuous fluxes
\cite{ChFr1,ChenFrid,ChTo}, 
capillarity and prescribed mean curvature problems \cite{ComiLeo, LeoSar,LeoSar2,MR3501836}, 
the weak formulations of problems involving the 1-Laplacian operator
\cite{MR1814993,MR2348842,AVCM,Cas, MR3813962, MR2502520},
and continuum mechanics \cite{ChCoTo,DGMM,Silh,Schu}. For recent extensions to non-Euclidean and fractional frameworks we refer to \cite{ComiMagna,BuffaComiMira,comi2023fractional}.

The current general setting for the pairing theory is the following (see e.g.\ \cite{Anz, ChenFrid, ComiPayne}). Given an open set $\Omega\subset\R^N$, we say that a vector field $\A \in L^{\infty}(\Omega; \R^N)$ is a divergence-measure field, and we write $\A \in \DM(\Omega)$, if $\Div \A$ is a finite Radon measure on $\Omega$. For any function $u\in BV(\Omega) \cap L^{\infty}(\Omega)$,
the pairing $\pair{\A,Du}$ %between $\A$ and $Du$ 
is defined in the sense of distributions as
%$\pair{\A,Du}\colon C^\infty_c(\Omega) \to \R$ acting as
\begin{equation*}%\label{f:gpairing}
\pscal{\pair{\A,Du}}{\varphi}:= -\intA{\prec{u}\,\varphi}- 
\intom{u\A\cdot\nabla\varphi}\,,
\qquad
\text{for all } \varphi\in C^\infty_c(\Omega)\,.
\end{equation*}
We recall that this definition is well posed, since the measure $\Div\A$ does not charge sets of $(N-1)$-dimensional
Hausdorff measure zero, while the precise representative $\prec{u}$ of a $BV$ function $u$ 
is defined $\mathcal H^{N-1}$-a.e.\ in $\Omega$. 
In fact, it is proved in \cite{Anz, ChenFrid} that the pairing $\pair{\A,Du}$ is a Radon measure in $\Omega$, 
and $|\pair{\A,Du}| \leq \|\A\|_{L^{\infty}(\Omega; \R^N)} |Du|$, so that there exists a density
$\polar(\A,Du,\cdot) \in L^1(\Omega, |Du|)$ such that
the equality
\begin{equation*}%\label{f:polar}
\pair{\A,Du}=\polar(\A,Du,x)|Du|
\end{equation*}
holds in the sense of measures. In addition, in \cite{CD3, MR3939259, CDM} it is shown that the boundedness assumption on $u$ can be replaced with the weaker requirement that $u^* \in L^1(\Omega; |\Div \A|)$.

\smallskip
The aim of this paper is to find some representation formula for the pairing, obtained through 
a detailed description of the density $\theta(\A, Du, x)$ of the pairing measure $\pair{\A, Du}$ with respect to $|Du|$.

In the classical case, when $\A\in \DM(\Omega) \cap C(\Omega; \R^N)$ and $u\in BV(\Omega) \cap C^1(\Omega)$, then $\pair{\A, Du} = \A \cdot \nabla u\, \LLN$, where $\LLN$ is the $N$-dimensional Lebesgue measure,
so that $\polar(\A,Du,x) =0$ if $\nabla u(x) = 0$, whereas
\[
\polar(\A,Du,x)  = \A(x) \cdot \frac{\nabla u (x)}{|\nabla u(x)|} 
=: \Trace[]{\A}{\{u = u(x)\}}\,,
\qquad \text{if}\ \nabla u(x) \neq 0,
\]
where $\Trace[]{\A}{\{u = u(x)\}}$ denotes the normal trace of $\A$ on the regular level set $\{u = u(x)\}$.

In the general case, 
the usual decomposition 
$$Du = \nabla u \, \LLN + D^j u + D^c u$$ of the measure $Du$ into its absolutely continuous, jump and Cantor parts,
leads to a corresponding decomposition of the pairing measure
$$\pair{\A, Du} = \pair{\A, Du}^a + \pair{\A, Du}^j + \pair{\A, Du}^c.$$

The absolutely continuous part satisfies $\pair{\A, Du}^a = \A \cdot \nabla u \, \Leb{N},$
as it is first shown in \cite{ChenFrid}, while in \cite{CD3} the authors prove that $\pair{\A, Du}^j = \Trace[*]{\A}{J_u}\, |D^j u|$,
where $\Trace[*]{\A}{J_u}$ is the average of the interior and exterior weak normal traces of the vector field $\A$ on 
the jump set $J_u$ of $u$ (see Section~\ref{distrtraces} below).
This result is satisfactory for what concerns the representation of the absolutely continuous part and the jump part of the pairing measure,
since it implies that
\[
\polar(\A,Du,x) = \A \cdot \frac{\nabla u(x)}{|\nabla u(x)|}\, \chi_{\{\nabla u \neq 0\}}(x)\,,
\qquad 
\text{for $\LLN$-a.e.}\ x\in\Omega, 
\]
and
\[
\polar(\A,Du,x) = \Trace[*]{\A}{J_u}(x)\,,
\qquad 
\text{for $|D^j u|$-a.e.}\ x\in\Omega.
\]
%On the other hand, in \cite{CD3} the representation of the Cantor part 
%of the pairing measure as
%\((\A, Du)^c = \widetilde{\A} \cdot D^c u\)
%is given only assuming that
%$|D^c u| (S_{\A}) = 0$,
On the other hand, for what concerns the representation of the Cantor part 
of the pairing measure, in \cite{CD3} it is proved that
\((\A, Du)^c\res (\Omega\setminus S_{\A}) = 
\widetilde{\A} \cdot D^c u\res (\Omega\setminus S_{\A})\),
where \(S_{\A}\) is the approximate discontinuity set  of \(\A\),
and $\widetilde{\A}$ denotes the approximate continuous representative of $\A$
in $\Omega\setminus S_{\A}$.
Yet, this representation on $\Omega\setminus S_{\A}$ is far from being optimal,
since there exist vector fields $\A \in \DM(\R^N)$
such that the Hausdorff dimension of $S_{\A}$ is $N$ (see Example~\ref{dimensionSA}).

\smallskip
Aiming to obtain a representation formula for $\polar(\A,Du,\cdot)$ without any additional assumption,   
we propose a new approach based on the use of the coarea formula for the pairing measure proved in \cite[Theorem 4.2]{CD3}. 
The basic idea is to use the representation of the purely jump measure $(\A, D\chi_{\{u>t\}})$
on superlevel sets of $u$,
and to recover information on $(\A, Du)$ through the coarea formula, 
obtaining in particular that
\[
(\A, Du)^c = \Trace[*]{\A}{\partial^* \{u>\widetilde u(\cdot)\} }(\cdot)|D^cu|
\]
where $\widetilde{u}$ is the approximate limit of $u$ at some Lebesgue point and $\partial^* E$ denotes the reduced boundary of some measurable set $E$ (see Theorem~\ref{t:repr}).

Hence, more explicit representation formulas for $\polar(\A,Du,\cdot)$ can be inherited by
explicit representation formulas for the weak normal traces of $\A$.

A relevant contribution in this direction is contained in the unpublished paper \cite{Anz2}, 
where the divergence of the vector field $\A\in L^\infty(\Omega;\R^N)$ is assumed to be a summable function, 
and the weak normal trace of $\A$ 
is obtained as the limit of a suitable cylindrical average. 
More precisely,  \cite[Theorem~3.6]{Anz2}  states that, if
$\Div \A \in L^1(\Omega)$
and $u\in BV(\Omega)\cap L^\infty(\Omega)$, then
\begin{equation}\label{f:denscyl1}
\theta(\A, Du, x) =
%\Trace[*]{\A}{\partial^* \{u>\widetilde u(x)\} } =
\Cyl{\A}{\nu_u}(x)
\qquad
\text{for $|Du|$-a.e.}\ x\in\Omega,
\end{equation}
where $Du = \nu_u\, |Du|$ is the polar decomposition of $Du$, 
and, for some set $G \subset \Omega$ and some function $\zeta : G \to \mathbb{S}^{N-1}$,
\begin{equation*}%\label{f:cyl}
\Cyl{\A}{\zeta}(x)
:= \lim_{\rho\downarrow 0}
\lim_{r\downarrow 0}
\frac{1}{\LLN(C_{r,\rho}(x, \zeta(x)))}
\int_{C_{r,\rho}(x, \zeta(x))} \A(y) \cdot \zeta(x) \, dy \quad \text{ for } x \in G,
\end{equation*}
whenever the limits exist, with
\[
C_{r,\rho}(x, \zeta(x)) :=
\left\{
y\in\R^N:\ 
|(y-x)\cdot\zeta(x)| < r,\
|(y-x) - [(y-x)\cdot\zeta(x)]\zeta(x)| < \rho
\right\}
\]
(the existence of the limit in \eqref{f:denscyl1} for $|Du|$-a.e. $x \in \Omega$ is part of the statement).

In Theorem~\ref{representAnz}
we obtain a generalization of above-mentioned result
to divergence-measure vector fields, 
by adapting the arguments of the original proof through the use, as a new ingredient, 
of the Gauss-Green formulas recently proved in \cite{CD3} (see Theorem \ref{t:gGG}).
More precisely, the representation formula \eqref{f:denscyl1} turns out to hold true provided that the set on which the jump part of the measure $\Div \A$ is concentrated, which we denote by $\Theta_{\A}$, has an $\Haus{N-1}$-negligible intersection with the jump set $J_u$ of the function $u$ (see also Remark \ref{rem:Cantor_1}).
This result is optimal: if instead $\Haus{N-1}(\Theta_{\A} \cap J_u) > 0$, then relation \eqref{f:denscyl1} is no longer valid, as it is shown in Example~\ref{e:no}.

As an application, we obtain the following Gauss-Green formula,
valid for every $\A\in\DM(\R^N)$ satisfying $\Haus{N-1}(\Theta_{\A})=0$, $u\in BV(\R^N)\cap L^\infty(\R^N)$,
and $E\subset\R^N$ set of finite perimeter such that $\supp(\chi_E u)$ is bounded:
\[
\int_{E^1} u^* \, d\Div\A + \int_{E^1} \Cyl{\A}{\nu_u} \, d|Du| = -
\int_{\partial ^*E} u^i \ \Cyl{\A}{\nu_{E}} \,
d\mathcal H^{N-1}\,,
\]
where $E^1$ denotes the measure theoretic interior of $E$,
and $u^i$ denotes the interior trace of $u$ on $\partial^* E$ 
(see Theorem~\ref{t:GG}).

In addition, in the paper \cite{CD5} the cylindrical averages approach is further exploited in order to gain an explicit representation for the relaxation of a pairing-type functional.

\smallskip
Finally, combining our results
with the representation formula for weak normal traces obtained in \cite[Theorem~4.4]{Silh},
we get the following further integral representation
\begin{equation*}
%\label{repreSil}
\theta(\A, Du, x) =
\lim_{r\to 0}
\frac{N}{2 \omega_{N-1} r^N}
\left(
\int_{B_r^i(x)} \A(y) \cdot \frac{y-x}{|y-x|}\, dy
-
\int_{B_r^e(x)} \A(y) \cdot \frac{y-x}{|y-x|}\, dy
\right)\,,
\end{equation*}
for $|Du|$-a.e.\ $x\in\Omega$, where \(B_r^i(x) := \{y\in B_r(x):\ (y-x)\cdot \nu_u(x) > 0\}\), 
\(B_r^e(x) := \{y\in B_r(x):\ (y-x)\cdot \nu_u(x) < 0\}\),
and $\omega_{N-1}$ is the $(N-1)$-dimensional Lebesgue measure of
the unit ball in $\R^{N-1}$.

As a further application of our general representation formula,
in the final part of the paper
we recover the local structure of the pairing measure by means of its
tangent measures, coherently with the classical theory of sets of finite perimeter and functions of bounded variation. This result can be also achieved by means of direct calculations on the blow-up sequence of the pairing measure; however this is not necessary, since we can exploit Theorem~\ref{t:repr} below and the Federer-Vol'pert theorem.

\medskip
The plan of the paper is the following.

In Section~2 we set the notation, and we recall some results on divergence-measure vector fields, their
weak normal traces and functions of bounded variation.

In Section~3 we first recall some results concerning the pairing between divergence-measure vector fields
and functions of bounded variation, mainly taken from
\cite{Anz,ChenFrid,CD3,CDM}.
Then we prove the result on the representation of the density
$\polar(\A,Du,\cdot)$ in terms of weak normal traces (Theorem~\ref{t:repr}).

In Section~4, building on the results of Section~3, we show that $\polar(\A,Du,\cdot)$ can be represented in terms of the cylindrical averages introduced in \cite{Anz2}; then we achieve a similar result with the half balls averages introduced in \cite{Silh}.

Finally, in Section~5 we briefly describe the tangential properties of the pairing measure.

%%%%%%%%%%%%%%%%%%%%%%%%%%%%%%%%%%%

\section{Notation and preliminary results}
\label{s:prelim}

In the following we denote by \(\Omega\) a nonempty open subset of 
\(\R^N\), and for every set $E\subset \R^N$ we denote by $\chi_{E}$ its 
characteristic function. We say that a set $E$ is compactly contained in $\Omega$, and we write $E \Subset \Omega$, if the closure \(\overline{E}\) of \(E\) is a compact subset of \(\Omega\). Given two sets $E, F \subset \R^N$, their symmetric difference is the set $E\vartriangle F := (E \setminus F) \cup (F \setminus E)$.
For $x \in \R^N$ and $r > 0$, we denote by $B_{r}(x)$ the ball centered in $x$ with radius $r$, and we set $B_1 := B_1(0)$.

\subsection{Measures}

We denote by
$\LLN$ 
and $\hh$
the Lebesgue measure 
and the $(N-1)$-dimensional 
Hausdorff measure in $\R^N$, respectively. Unless otherwise stated, a measurable set is a $\Leb{N}$-measurable set. We set $\omega_N := \Leb{N}(B_1)$. 
%= \frac{\pi^{\frac{N}{2}}}{\Gamma\left (\frac{N}{2} + 1\right )}$, where $\Gamma$ is Euler's Gamma function.

Following the notation of \cite{AFP}, we denote by $\mathcal{M}_{\rm loc}(\Omega)$ the space of Radon measures on $\Omega$, and by $\radon$ the space of finite Radon measures on $\Omega$.

Given $\mu\in \mathcal{M}(\Omega)$ and a $\mu$-measurable set $E$, the {\sl restriction} $\mu\res E$ is the Radon measure defined by
\[
\mu\res E(B):=\mu(E\cap B), \qquad \forall\ B\ \text{$\mu$-measurable},\ 
B\subset\Omega.
\]

The {\sl total variation} $|\mu|$ of $\mu \in \mathcal{M}(\Omega)$ is 
the nonnegative Radon measure defined by
\[
|\mu|(E) := \sup\left\{ \sum_{h=0}^\infty |\mu(E_h)| \colon \ E_h\ 
\text{$\mu$-measurable sets, pairwise disjoint},\ E=\bigcup_{h=0}^\infty E_h 
\right\},
\]
for every $\mu$-measurable set $E$.
Since $\mu \in \mathcal{M}_{\rm loc}(\Omega)$ if and only if $\mu \in \mathcal{M}(\Omega')$ for every open set $\Omega' \Subset \Omega$, the above definitions can be easily extended to the case of a not necessarily finite Radon measure $\mu$ by adding the assumptions $B \Subset \Omega$ and $E_h \Subset \Omega$, respectively.

\medskip  

Let $\mu \in \mathcal{M}_{\rm loc}(\Omega)$ and $\nu$ be a nonnegative measure on $\Omega$. We say that $\mu$  is {\sl absolutely continuous} with respect to $\nu$ (notation: $\mu \ll \nu$) 
if $|\mu|(B)=0$ for every set $B \subset \Omega$ such that $\nu(B)=0$. If, in addition, $\nu$
is $\sigma$-finite, then, by the Radon-Nikod\'ym Theorem, there exists a unique 
function $\theta\in L^1_{\rm loc}(\Omega, \nu)$ (called {\sl density} of $\mu$ w.r.t. 
$\nu$) such that $\mu=\theta \nu$ (clearly, $\theta\in L^1(\Omega, \nu)$ if and only if $|\mu|(\Omega) < \infty$).

In the special case of $\nu= |\mu|$, the density
$\theta$ satisfies $|\theta|=1$ $\mu$-a.e.\ in $\Omega$, and 
$\mu=\theta |\mu|$ is called {\sl polar decomposition} of $\mu$.

Two positive measures $\nu_1$, $\nu_2\in \mathcal{M}_{\rm loc}(\Omega)$ are {\sl mutually 
singular}
(notation: $\nu_1 \perp \nu_2$) 
if there exists a Borel set $E \subset \Omega$ such that $|\nu_1|(E)=0$ and 
$|\nu_2|(\Omega\setminus E) = 0$.

\medskip

Given a nonnegative measure $\mu\in\mathcal{M}_{\rm loc}(\Omega)$ and a  
function $f\in L^1_{\rm loc}(\Omega,\mu)$, we say that $f$ has an {\sl approximate limit} \(z\in\R\) at $x\in\Omega$ if
\begin{equation}\label{f:leb0}
\lim_{r\rightarrow0^{+}}\frac{1}{\mu\left(  B_r(x)\right)}\int_{B_r\left(  
x\right)
}\left|  f(y)  - z  \right|  \,d\mu(y)=0.
\end{equation}
In this case, we say that $x$ is a {\em Lebesgue point} of $f$ with respect to $\mu$. Thanks to Lebesgue's differentiation theorem, we know that $\mu$-almost every $x \in \Omega$ is a Lebesgue point of $f$ with respect to $\mu$. In addition, in every Lebesgue point of $f$ with respect to $\mu$ the approximate limit is uniquely determined and is denoted by $z := \widetilde{f}(x)$. In what follows we choose $\widetilde{f}$ as pointwise representative of $f\in L^1_{\rm loc}(\Omega,\mu)$; that is, we assume $f(x) := \widetilde f(x)$ in every Lebesgue point and whenever this choice does not cause any ambiguity.

\subsection{Divergence-measure fields }
\label{ss:div}

We denote by \(\DM(\Omega)\) the space of all
vector fields 
\(\A\in L^\infty(\Omega; \R^N)\)
whose divergence in the sense of distributions is a finite Radon measure in 
\(\Omega\), acting as
%\begin{equation}\label{f:defdiv}
\[
\int_\Omega \varphi\, d\Div\A = -\int_{\Omega} \A\cdot\nabla\varphi\, dx \qquad
\forall \varphi\in\cinftio.
\]
%\end{equation}
Similarly, \(\DMlocloc[\Omega]\) will denote the space of
all vector fields \(\A\in L^\infty_{{\rm loc}}(\Omega; \R^N)\)
whose divergence in the sense of distributions is a Radon measure in 
\(\Omega\). 

The main property, proved in \cite[Proposition 3.1]{ChenFrid} (see also \cite[Theorem 3.2]{Silh}), is that for 
every \(\A \in \DMlocloc[\Omega]\) the measure $\Div\A$ is absolutely 
continuous with respect to the Hausdorff measure $\hh$, so that the following 
decomposition result holds, which is the localized version of \cite[Proposition 2.3]{CDM} (see also \cite[Proposition~2.3]{ADM}).

\begin{proposition}\label{p:basicVF}
Given  a vector field \(\A \in \DMlocloc[\Omega]\), the set
%$\jump{\A}$ be the jump set of the measure $\mu = |\Div\A|$,
%defined in Proposition~\ref{p:decmu}(ii), that is
\begin{equation}\label{f:jump}
\jump{\A} 
:= \left\{
x\in\Omega:\
\limsup_{r \to 0^+}
\frac{|\Div \A| (B_r(x))}{r^{N-1}} > 0
\right\}
\end{equation} 
is a Borel set, \(\sigma\)-finite with respect to  \(\hh\), and the measure
\(\Div\A\) can be decomposed as the sum of mutually singular measures \(
\Div\A = \Div^a\A + \Div^c\A + \Div^j\A,\) 
where 
\begin{itemize} 
\item[(i)] $\Div^a\A \ll \LLN$;
\item[(ii)] \(\Div^c\A (B) = 0\) for every Borel set \(B\) with \(\hh(B) < +\infty\);
\item[(iii)] there exists $f\in L^1_{\rm loc}(\jump{\A}, \Haus{N-1}\res \jump{\A})$
such that
%\begin{equation*}\label{f:jumpdiv}
\(
\Div^j\A = f\, \hh\res\jump{\A}
\), which implies $\Div^j \A \ll \hh\res\jump{\A}$.
\end{itemize}
\end{proposition}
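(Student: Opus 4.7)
The plan is to combine the absolute continuity $|\Div\A|\ll\hh$, recalled immediately before the statement from \cite{ChenFrid, Silh}, with the classical density--measure comparison lemma governing $(N-1)$-dimensional upper densities of Radon measures. First I would verify that $\jump{\A}$ is Borel: for each fixed $r>0$ the map $x\mapsto |\Div\A|(B_r(x))$ is upper semicontinuous (open balls are outer regular), so restricting the $\limsup$ as $r\to 0^+$ to a countable dense set of radii yields a Borel function whose strict-positivity set is exactly $\jump{\A}$.

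For $\sigma$-finiteness I would write $\jump{\A}=\bigcup_{k\in\N}\jump{\A}_k$ with
\[
\jump{\A}_k := \left\{ x\in\Omega:\ \limsup_{r\to 0^+} \frac{|\Div\A|(B_r(x))}{r^{N-1}} > \tfrac{1}{k} \right\}
\]
and apply the standard bound $\hh(\jump{\A}_k \cap K) \leq C_N\, k\, |\Div\A|(K)$, valid for every compact $K\Subset\Omega$ and a dimensional constant $C_N$. Next I would define $\Div^a\A$ as the absolutely continuous part of $\Div\A$ in its Lebesgue decomposition with respect to $\LLN$, set $\Div^j\A := \Div\A\res\jump{\A}$, and $\Div^c\A := \Div\A - \Div^a\A - \Div^j\A$. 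Property~(i) is then immediate. Since any $\hh$-$\sigma$-finite set has zero Lebesgue measure, $\Div^a\A$ vanishes on $\jump{\A}$, so $\Div^j\A$ coincides with $\Div\A\res\jump{\A}$ and in particular $\Div^j\A \ll \hh\res\jump{\A}$ thanks to $|\Div\A|\ll\hh$; the Radon--Nikod\'ym theorem on the $\sigma$-finite space $\jump{\A}$ then supplies the density $f\in L^1_{\rm loc}(\jump{\A}, \hh\res\jump{\A})$ of~(iii). For~(ii), $\Div^c\A$ is concentrated on $\Omega\setminus\jump{\A}$, where the upper $(N-1)$-density of $|\Div\A|$ vanishes identically; the complementary direction of the density--measure comparison lemma yields $|\Div\A|(B)=0$ for every Borel $B\subset\Omega\setminus\jump{\A}$ with $\hh(B)<\infty$. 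Mutual singularity of the three pieces then follows by construction: $\Div^a\A$ is singular with respect to both $\Div^j\A$ and $\Div^c\A$ because $\Div^a\A\ll\LLN$ while both $\jump{\A}$ and the carrier of $\Div^c\A$ are $\LLN$-null, and $\Div^j\A\perp\Div^c\A$ since they live on disjoint sets.

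The main obstacle is invoking the density--measure comparison lemma in its two opposite forms at the right places: the lower bound $\hh\lesssim |\Div\A|$ on the superlevel sets $\jump{\A}_k$ delivers $\sigma$-finiteness, while the matching upper bound on $|\Div\A|$ via the upper $(N-1)$-density is what forces the Cantor part to vanish on sets of finite $\hh$-measure. Once both estimates are fixed, the remaining verifications of~(i), (ii), (iii) and of mutual singularity are routine bookkeeping.
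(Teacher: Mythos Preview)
The paper does not supply its own proof of this proposition: it is stated as a preliminary result, described as ``the localized version of \cite[Proposition~2.3]{CDM} (see also \cite[Proposition~2.3]{ADM})'', with the key input $|\Div\A|\ll\hh$ attributed to \cite{ChenFrid,Silh}. Your sketch reconstructs exactly the standard argument behind those references---density comparison for the $(N-1)$-upper density in both directions, plus Lebesgue and Radon--Nikod\'ym decompositions---so the approach matches what the cited literature does.

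One small correction that does not affect the outcome: for \emph{open} balls the map $x\mapsto |\Div\A|(B_r(x))$ is \emph{lower} semicontinuous (by inner regularity on open sets, or Fatou), not upper semicontinuous; upper semicontinuity holds for closed balls. Either way the function is Borel, and since $r\mapsto |\Div\A|(B_r(x))$ is left-continuous and nondecreasing, the $\limsup$ over all $r\to 0^+$ coincides with the $\limsup$ along a countable dense set of radii, so your Borel measurability conclusion stands. The rest of the argument---$\sigma$-finiteness from the lower density bound, the definition of the three pieces, the vanishing of $\Div^c\A$ on $\hh$-finite sets via the upper density bound, and mutual singularity---is correct as written.
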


In what follows, we will call \(\jump{\A}\) the {\em jump set} of the measure \( \Div\A\).

% % % % % % % % % % % % % % % % % % % % % % % %
\subsection{Weak normal traces on oriented countably $\Haus{N-1}$-rectifiable sets} \label{distrtraces}
We recall that $\Sigma \subseteq \R^N$ is a {\sl countably $\Haus{N-1}$-rectifiable set} if there exist (at most) countably many $C^1$ embedded hypersurfaces \((\Sigma_k)_{k\in\N} \subseteq \R^N\) such that 
$
\hh(\Sigma\setminus \bigcup_k \Sigma_k) = 0.
$
%We can exploit this property to give a notion of orientation on rectifiable sets: 
A notion of orientation on rectifiable sets can be given as follows:
if we choose {\sl oriented} hypersurfaces \((\Sigma_k)\), 
we define $\hh$-a.e.\ on $\Sigma$ an orientation $\nu_\Sigma$ by selecting 
pairwise disjoint Borel sets \(N_k\subseteq \Sigma_k\) 
such that 
$
\hh(\Sigma\setminus \bigcup_k N_k) = 0
$
and by setting
$
\nu_\Sigma=\nu_{\Sigma_k} \ \text{ on }\ N_k. 
$ 
This orientation depends clearly on the choice of the decomposition, but only 
up to the sign, due to the fact that for any pair of $C^1$ hypersurfaces 
$\Gamma$, $\Gamma'$ it holds that $\nu_{\Gamma'}\in \{-\nu_{\Gamma}, 
\nu_{\Gamma}\}$ $\hh$-a.e. on $\Gamma\cap\Gamma'$.

\medskip

In what follows, we
will deal with the traces of the normal component of a vector field \(\A\in 
\DMlocloc\) on an oriented countably \(\Haus{N-1}\)-rectifiable set
\(\Sigma\subset\Omega\).
In order to fix the notation, we briefly recall the construction given in 
\cite[Propositions~3.2, 3.4 and Definition~3.3]{AmbCriMan}.

Given a domain \(\Omega'\Subset\Omega\) of class \(C^1\), 
the trace of the normal component of \(\A\) on \(\partial\Omega'\) 
is the distribution defined by
\begin{equation} \label{f:deftrace}
\pscal{\Trace[]{\A}{\partial\Omega'}}{\varphi}
:= - \int_{\Omega'} \A\cdot \nabla\varphi\, dx - \int_{\Omega'} \varphi\, d\Div\A,
\qquad
\forall\varphi\in C^\infty_c(\Omega).
\end{equation}
It turns out that this distribution is induced by an \(L^\infty\) function on 
\(\partial\Omega'\),
still denoted by \(\Trace[]{\A}{\partial\Omega'}\), and
\begin{equation}\label{f:infestrace}
\|\Trace[]{\A}{\partial\Omega'}\|_{L^\infty(\partial\Omega', \Haus{N-1}\res 
\partial\Omega')}
\leq \|\A\|_{L^\infty(\Omega'; \R^N)}.
\end{equation}

Given an oriented countably $\hh$-rectifiable set \(\Sigma\), and using the 
notation for the covering of $\Sigma$ introduced at the beginning of this 
section, one can prove that
for every $k\in\N$, there exist two open bounded sets \(\Omega_k, 
\Omega'_k\) with 
\(C^1\) boundary
and interior normal vectors \(\nu_{\Omega_k}\) and \(\nu_{\Omega_k'}\), 
respectively,
such that
\(N_k\subseteq \partial\Omega_k \cap \partial\Omega'_k\),
and
\[
\nu_{\Sigma_k}(x) = \nu_{\Omega_k}(x) = -\nu_{\Omega'_k}(x)
\qquad \forall x\in N_k.
\]

By a deep localization property proved in \cite[Proposition 3.2]{AmbCriMan},
we can fix an orientation on \(\Sigma\), given by
\[
\nu_{\Sigma}(x) := \nu_{\Sigma_k}(x), \qquad \hh\text{-a.e. on}\  N_k
\]
and the {\em interior and exterior normal traces} of \(\A\) on \(\Sigma\) 
are defined by
%\begin{equation}\label{f:traces}
\[
\Trp{\A}{\Sigma} := \Tr(\A, \partial\Omega_k),
\quad
\Trm{\A}{\Sigma} := -\Tr(\A, \partial\Omega'_k),
\qquad
\hh\text{-a.e.\ on}\ N_k,
%\text{on}\  N_i
\]
%\end{equation}
respectively.

As a consequence, if we consider two oriented countably $\hh$-rectifiable sets $\Sigma$ and $\Sigma'$ with the same orientation and such that $\Haus{N-1}(\Sigma \cap \Sigma') > 0$, then
\begin{equation}\label{locality_traces}
\Trp{\A}{\Sigma} = \Trp{\A}{\Sigma'}
\quad
\Trm{\A}{\Sigma} = \Trm{\A}{\Sigma'}
\qquad
\hh\text{-a.e.\ on}\ \Sigma \cap \Sigma',
\end{equation}
see for instance \cite[Proposition 4.10]{ComiPayne}.

Moreover, the normal traces belong to
\(L^{\infty}(\Sigma, \Haus{N-1}\res\Sigma)\) 
and satisfy
\begin{equation} \label{improved_bound}
\max \{\|\Trp{\A}{\Sigma}\|_{L^{\infty}(\Sigma, \Haus{N-1} \res \Sigma)}, \|\Trm{\A}{\Sigma}\|_{L^{\infty}(\Sigma, \Haus{N-1} \res \Sigma)} \} \le \|\A\|_{L^{\infty}(\Omega'; \R^N)}
\end{equation}
for every open set $\Omega'$ such that $\Sigma \subset \Omega' \Subset \Omega$ (see for instance \cite[Theorem 4.2]{ComiPayne}), and
\begin{equation}\label{f:trA}
\Div \A \res\Sigma =
\left[\Trp{\A}{\Sigma} - \Trm{\A}{\Sigma}\right]
\, {\mathcal H}^{N-1} \res\Sigma
\end{equation}
(see \cite[Proposition~3.4]{AmbCriMan}).
In particular, by \eqref{f:infestrace}, we get
$|\Div\A|(\Sigma) \leq 2\|\A\|_{L^{\infty}(\Omega; \R^N)} \Haus{N-1}(\Sigma)$.

In what follows we use the notation
\[
\Trace[*]{\A}{\Sigma}:= \frac{\Trp{\A}{\Sigma}+\Trm{\A}{\Sigma}}{2}\,.
\]

\begin{remark}\label{r:diffnt}
We stress the fact that in \eqref{f:deftrace} we are using the opposite sign with respect to the definition of normal trace given in \cite{AmbCriMan, Anz}, and so the opposite orientation of the rectifiable hypersurfaces.
Anyway, if $\Sigma$ is oriented by a normal vector field
$\nu$ and $\Sigma'$ is the same set oriented by $\nu' := -\nu$,
then
\[
\Trm{\A}{\Sigma'} = -\Trp{\A}{\Sigma},
\quad
\Trp{\A}{\Sigma'} := -\Trm{\A}{\Sigma},
\]
so that the difference
$\Trp{\A}{\Sigma} - \Trm{\A}{\Sigma}$
in \eqref{f:trA}
is independent of the choice of the orientation on $\Sigma$.
\end{remark}

\subsection{Functions of bounded variation}
\label{ss:BV}

Even if we mostly follow the terminology of \cite{AFP}, nevertheless we recall the 
main conventions and results for reader's convenience.
 
A function \(u\in L^1(\Omega)\) has \textsl{bounded variation} in 
\(\Omega\), and we write $u\in BV(\Omega)$,
if the distributional derivative \(Du\) of \(u\) is a vector valued finite 
Radon measure in 
\(\Omega\).
We denote by \(BV_{{\rm loc}}(\Omega)\) the set of functions
\(u\in L^1_{{\rm loc}}(\Omega)\) that belongs to 
\(BV(\Omega')\) for every open set \(\Omega'\Subset\Omega\). In addition, we let $BV(\Omega; \R^m)$ be the space of $\R^m$-vector valued functions of bounded variations in $\Omega$, and we define analogously the local space $BV_{\rm loc}(\Omega; \R^m)$.

\medskip

In spite of the fact that a $BV$ function $u$ is an $L^1$ function, it admits a representative well defined outside an $\Haus{N-1}$-negligible set. In order to define it, we recall some more results on approximate limits of summable functions.

If in the definition of approximate limit \eqref{f:leb0} we have $\mu = \Leb{N}$ and $f = u\in L^1_{{\rm loc}}(\Omega; \R^m)$, then we say that $x$ is a Lebesgue point of $u$, omitting the reference to the Lebesgue measure. In order to emphasize the distinction with the approximate jump points of $u$ defined below, we use here and in similar situations the notation $\widetilde{u}(x)$ for the pointwise representative of $u$ in its Lebesgue points.
%We say that a function  has an {\sl approximate limit} 
%\(z\in\R^m\) at
%$x\in\Omega$ if
%\[
%\lim_{r\rightarrow0^{+}}\frac{1}{\LLN\left(  B_r(x)\right)}\int_{B_r\left(  
%x\right)
%}\left|  u(y)  -z  \right|  \,dy=0;
%\]
The set $C_u\subset\Omega$ of points where this property holds is called the
\textsl{approximate continuity set} of $u$, whereas
the set \(S_u := \Omega\setminus C_u\)
is called the \textsl{approximate discontinuity set} of $u$.
%For any $x\in C_u$ the approximate limit $z$ is uniquely determined and is denoted by $z:=\ut(x)$.

We say that \(x\in\Omega\) is an {\sl approximate jump point} of \(u\) if
there exist \(a,b\in\R^m\), \(a\neq b\), and a unit vector \(\nu\in\R^N\) such that 
and
\begin{equation}\label{f:disc}
\begin{gathered}
\lim_{r \to 0^+} \frac{1}{\LLN(B_r^i(x))}
\int_{B_r^i(x)} |u(y) - a|\, dy = 0,
\\
\lim_{r \to 0^+} \frac{1}{\LLN(B_r^e(x))}
\int_{B_r^e(x)} |u(y) - b|\, dy = 0,
\end{gathered}
\end{equation}
where \(B_r^i(x) := \{y\in B_r(x):\ (y-x)\cdot \nu > 0\}\), and 
\(B_r^e(x) := \{y\in B_r(x):\ (y-x)\cdot \nu < 0\}\).
The triplet \((a,b,\nu)\), uniquely determined by \eqref{f:disc} 
up to a permutation
of \((a,b)\) and a change of sign of \(\nu\),
is denoted by \((\uint(x), \uext(x), \nu_u(x))\).
The set of approximate jump points of \(u\) is denoted by \(J_u\), and it is clear that $J_u \subset S_u$.

If \(u\in BV_{\rm loc}(\Omega; \R^m)\), then both $J_u$ and $S_u$ are countably 
$\Haus{N-1}$-rectifiable sets, we have $\Haus{N-1}(S_u \setminus J_u)=0$, and 
for $\Haus{N-1}$-a.e.\ $x\in J_u$ the unit vector $\nu_u(x)$ can be identified 
with the normal 
vector $\nu_{J_u}(x)$ defined in Section \ref{distrtraces} for general countably 
$\Haus{N-1}$-rectifiable sets (up to a change in orientation).

\begin{definition}
The precise representative $u^*$ of \(u\in BV_{\rm loc}(\Omega, \R^m)\) is 
defined in $\Omega\setminus(S_u\setminus J_u)$
(hence $\hh$-a.e.\ in $\Omega$) as
\[
u^*(x) :=
\begin{cases}
\ut(x) & \text{ for } x \in \Omega \setminus S_u, \\[4pt]
\dfrac{\uint(x)+\uext(x)}{2} & \text{ for } x \in J_u.
\end{cases}
\]
\end{definition}

In addition, in \cite{CDM} the authors consider a family of representatives of a $BV$ function depending on a Borel map $\lambda : \Omega \to [0, 1]$, namely the {\sl 
$\lambda$--representative} of $u\in BV_{\rm loc}(\Omega; \R^m)$, given by
\begin{equation}\label{f:pr}
u^\lambda(x):=
\begin{cases}
\tilde{u}(x) & \text{ for } x\in \Omega \setminus S_u, \\
(1-\lambda(x))\umeno(x)+\lambda(x)\upiu(x) & \text{ for } x\in J_u.
\end{cases}
\end{equation}
Clearly, for $\lambda(x) = 1/2$ for every $x\in\Omega$, we get $u^\lambda = u^*$.

In the remaining part of this section, we focus on the scalar case $m = 1$.
The gradient measure \(Du\) of a function \(u\in BV(\Omega)\) can be decomposed 
as
the sum of mutually singular measures
$$Du = D^a u + D^j u + D^c u,$$
where $D^au$ is the {\em absolutely continuous part} with respect to the Lebesgue 
measure, that is, 
$D^a u=\nabla u\,  \LLN$ ($\nabla u\in L^1(\Omega; \R^N)$ is the {\em approximate differential} of $u$), while $ D^j u$ is the {\sl jump part}, characterized by
$D^j u =(\prec[i]{u} - \prec[e]{u}) \, \nu_u\, \Haus{N-1}\res J_u$,
and $D^c u$ is the {\sl Cantor part}.
We denote by 
\begin{equation*}
D^d u := D^a u + D^c u
\end{equation*}
the {\em diffuse part} of $Du$, which is concentrated on $C_u$, since $S_u$ is countably $\Haus{N-1}$-rectifiable.

Based on the notion of density of a measurable set $E$ at a point $x\in\R^N$:
\[
D(E;x) := \lim_{\rho\to 0^+} \frac{\LLN(E\cap B_\rho(x))}{\LLN(B_\rho(x))},
\] 
(whenever the limit exists), we define the {\em measure theoretic interior} and {\em exterior} of $E$:
\begin{equation*}
E^{1} := \{ x \in \R^N : D(E; x) = 1 \} \text{ and } E^{0} := \{ x \in \R^N : D(E; x) = 0 \},
\end{equation*}
as well as the {\em measure theoretic boundary} 
\begin{equation} \label{f:MTB}
\partial^M E := \R^N \setminus (E^1 \cup E^0).
\end{equation}
In the case of a general measurable function $u: \Omega \to \R$, we set
\begin{equation*}
\{u \lessgtr t\} := \{ x \in \Omega : u(x) \lessgtr t \}  \text{ for all } t \in \R,
\end{equation*}
and
\[
u^-(x) :=
\sup\left\{t\in\R\colon
D(\{u < t\};x) = 0\right\},
\quad
u^+(x) :=
\inf\left\{t\in\R\colon
D(\{u > t\};x) = 0\right\},
\]
for which, in the case $u \in BV_{\rm loc}(\Omega)$, we have that
\[
\umeno(x) =\min\{\uint(x),\uext(x)\}, \qquad 
\upiu(x) =\max\{\uint(x),\uext(x)\},
\qquad \text{for $\Haus{N-1}$-a.e.}\ x\in J_u,
\]
so that
we can always choose an orientation on $J_u$
such that $\uint = \upiu$ on $J_u$
(see \cite[\S 4.1.4, Theorem~2]{GMS1}).
In what follows we shall always fix this orientation.
Under this assumption, $\nu_u(x)$ coincides, for $\hh$-a.e.\ $x\in J_u$, with
the density in the polar decomposition of the measure $D^j u$
(see \cite[\S4.1.4, Cor.~2]{GMS1}), so that we may write $D^j u = \nu_u \, |D^j u|$.
Due to this identity, with a little abuse of notation, we also denote by $\nu_u$ the density of $Du$ with respect to $|Du|$, so that
$$Du = \nu_u |D u|$$
and $|\nu_u(x)| = 1$ for $|Du|$-a.e. $x \in \Omega$ (thanks to Radon-Nikod\'ym Theorem).

\medskip

A measurable set $E$ is of (locally) finite perimeter in $\Omega$ if 
its characteristic function $\chi_E$ belongs to $BV(\Omega)$ (respectively, $BV_{\rm loc}(\Omega)$).
If $E$ has locally finite perimeter in \(\Omega\),
we call \textsl{reduced boundary} \(\partial^* E\) of \(E\) the set of all 
points
\(x\in \Omega\) in the support of \(|D\chi_E|\) such that the limit
\[
\nuint_E(x) := \lim_{\rho\to 0^+} 
\frac{D\chi_E(B_\rho(x))}{|D\chi_E|(B_\rho(x))}
\]
exists in \(\R^N\) and satisfies \(|\nuint_E(x)| = 1\).
The function \(\nuint_E\colon\partial^* E\to \mathbb{S}^{N-1}\) is called
the \textsl{measure theoretic unit interior normal} to \(E\), and it is clear that $\nuint_E = \nu_{\chi_E}$.

A fundamental result of De Giorgi (see \cite[Theorem~3.59]{AFP}) states that
\(\partial^* E\) is a countably \((N-1)\)-rectifiable set,
\(|D\chi_E| = \hh\res \partial^* E\),
and $\nuint_E(x) = \nu_{\partial^* E}(x)$ for $\Haus{N-1}$-a.e. $x \in \partial^* E$, where $\nu_{\partial^* E}$ is the normal vector to $\partial^* E$, in the sense of Section \ref{distrtraces}. Due to these facts, with a little abuse of notation, we shall simply write $\nu_E$ to denote the measure theoretic unit interior normal, coherently with most of the literature.

If $u\in BV_{\rm loc}(\Omega)$, then the level sets
$E_t := \{u > t\}$ are of locally finite perimeter for $\Leb{1}$-a.e.\ $t\in\R$,
and we have
$\nu_{E_t}(x) = \nu_{\Sigma_{t}}(x) = \nu_u(x)$
for $\hh$-a.e.\ $x\in\Sigma_t$,
where $\Sigma_t := \partial^*\{u>t\}$. In addition, the measure $Du$ can be disintegrated on the level sets of $u$ thanks to the coarea formula (see \cite[Theorem 4.5.9]{FED}).

\begin{theorem}[Coarea formula]\label{coarea}
If $u\in BV_{\rm loc}(\Omega)$, then for $\LLU$-a.e. $t\in\R$ the set $\{u>t\}$ has 
finite 
perimeter in $\Omega$ and  
\begin{equation*}%\label{coareauno}
\int_\Omega
g\,d|Du|=\int_{-\infty}^{+\infty} \int_{\partial^* \{u>t\}
\cap\Omega}\!g\,d\hh\!\, dt
=\int_{-\infty}^{+\infty}
\int_{\{u^-\leq t\leq u^+ \}}\!g\,d\hh\, dt,
\end{equation*}
for every Borel function $g:\Omega\to[0,+\infty]$.
\end{theorem}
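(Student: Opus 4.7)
The plan is to follow the classical Fleming--Rishel strategy: first reduce the statement to the one--dimensional integration of perimeters of superlevel sets, then extend by a monotone class argument to arbitrary non-negative Borel $g$, and finally identify $\partial^* \{u>t\}$ with $\{u^- \leq t \leq u^+\}$ up to $\hh$-null sets.

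\textbf{Step 1 (Reduction).} The identity with general $g$ is equivalent, by a standard monotone class argument, to the case $g = \chi_B$ for $B \subset \Omega$ Borel, i.e.\ to
\[
|Du|(B) = \int_{-\infty}^{+\infty} \hh(\partial^*\{u>t\} \cap B)\, dt.
\]
Indeed, once this identity is known, linearity gives it for simple functions and monotone convergence gives it for non-negative Borel $g$. Moreover, by outer regularity of Radon measures and Fubini, it is enough to prove it for $B = \Omega$ (applied to arbitrary open $\Omega' \subset \Omega$).

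\textbf{Step 2 (Upper bound on the integral of perimeters).} Choose a sequence $u_h \in C^\infty(\Omega) \cap BV(\Omega)$ with $u_h \to u$ in $L^1_{\rm loc}(\Omega)$ and $|Du_h|(\Omega) \to |Du|(\Omega)$ (available by mollification together with the standard Anzellotti--Giaquinta approximation). The classical smooth coarea formula gives
\[
\int_\Omega |\nabla u_h|\, dx = \int_{-\infty}^{+\infty} \hh(\{u_h = t\} \cap \Omega)\, dt
= \int_{-\infty}^{+\infty} P(\{u_h>t\}, \Omega)\, dt.
\]
For $\LLU$-a.e.\ $t$, $\chi_{\{u_h>t\}} \to \chi_{\{u>t\}}$ in $L^1_{\rm loc}(\Omega)$ (up to subsequences, via a diagonal/coarea for the $L^1$ convergence). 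Lower semicontinuity of perimeter and Fatou's lemma then yield
\[
\int_{-\infty}^{+\infty} P(\{u>t\}, \Omega)\, dt \leq \liminf_h \int_{-\infty}^{+\infty} P(\{u_h>t\}, \Omega)\, dt = |Du|(\Omega).
\]
In particular $\{u>t\}$ has finite perimeter in $\Omega$ for $\LLU$-a.e.\ $t$.

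\textbf{Step 3 (Lower bound).} For $u$ with compact superlevel sets (or after a truncation argument), use the layer cake representation $u(x) = \int_0^{+\infty} \chi_{\{u>t\}}(x)\, dt - \int_{-\infty}^0 \chi_{\{u\leq t\}}(x)\, dt$. For any $\varphi \in C^1_c(\Omega;\R^N)$ with $\|\varphi\|_\infty \leq 1$, Fubini gives
\[
-\int_\Omega u\,\Div\varphi\, dx = \int_{-\infty}^{+\infty}\!\left(-\int_\Omega \chi_{\{u>t\}}\Div\varphi\, dx\right) dt \leq \int_{-\infty}^{+\infty} P(\{u>t\},\Omega)\, dt,
\]
and taking the sup over $\varphi$ gives $|Du|(\Omega) \leq \int P(\{u>t\},\Omega)\, dt$. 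Combined with Step 2, this yields the desired equality for $B = \Omega$; repeating the argument on open $\Omega' \Subset \Omega$ and using Radon regularity, one obtains the identity for every Borel $B$.

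\textbf{Step 4 (Second equality).} It remains to show that, for $\LLU$-a.e.\ $t$, $\partial^* \{u>t\}$ and $\{u^- \leq t \leq u^+\}$ agree $\hh$-a.e. The inclusion $\partial^* \{u>t\} \subset \{u^- \leq t \leq u^+\}$ is immediate from the definitions of $u^\pm$ (at a point of density neither $0$ nor $1$ for $\{u>t\}$, both conditions $D(\{u<t\};x)>0$ and $D(\{u>t\};x)>0$ fail to be $0$). For the reverse inclusion up to $\hh$-null sets one invokes Federer's characterization of sets of finite perimeter: $\partial^M \{u>t\} \setminus \partial^*\{u>t\}$ is $\hh$-negligible whenever $\{u>t\}$ has finite perimeter, and $\{u^- \leq t \leq u^+\} \setminus \partial^M\{u>t\}$ is easily checked to be contained in the $\hh$-negligible set $S_u \setminus J_u$ outside a countable set of $t$'s.

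The main obstacle is Step 2: it requires simultaneous control, for $\LLU$-a.e.\ $t$, of the $L^1_{\rm loc}$-convergence of $\chi_{\{u_h>t\}}$ to $\chi_{\{u>t\}}$ and of the lower semicontinuity bound. This is the core of the Fleming--Rishel argument and the only step where the smooth approximation with strict convergence of total variations is really needed.
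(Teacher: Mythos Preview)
The paper does not prove this theorem; it is stated as a classical preliminary with a reference to \cite[Theorem~4.5.9]{FED}, so there is no proof in the paper to compare yours against.

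Your Steps~1--3 are the standard Fleming--Rishel argument for the first equality and are correct. (The worry you flag in Step~2 dissolves via the identity $\int_{\R}\|\chi_{\{u_h>t\}}-\chi_{\{u>t\}}\|_{L^1(\Omega)}\,dt=\|u_h-u\|_{L^1(\Omega)}$, which gives $L^1$-convergence of the superlevel sets for $\LLU$-a.e.\ $t$ along a subsequence.)

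Step~4, however, contains a genuine gap. Your claim that $\{u^-\le t\le u^+\}\setminus\partial^M\{u>t\}$ is contained in $S_u\setminus J_u$ for all but countably many $t$ is false. Consider $u(x_1,x_2)=x_1^2$ on $\R^2$: here $S_u=\emptyset$, yet for $t=0$ one has $\{u^-\le 0\le u^+\}=\{x_1=0\}$ while $\partial^M\{u>0\}=\emptyset$ (since $\{x_1\neq 0\}$ has density $1$ everywhere), so the difference is an entire line of infinite $\hh$-measure lying in the Lebesgue set of $u$. All one can show by your inclusion argument is that the difference is contained in $\{u^-=t\}\cup\{u^+=t\}$, and proving that these level sets of $u^\pm$ are $\hh$-negligible for $\LLU$-a.e.\ $t$ already amounts to the coarea identity on the diffuse part --- it cannot be ``easily checked'' independently. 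In Federer's treatment the second equality is in fact the primary statement, obtained by a route different from Fleming--Rishel; the paper then \emph{derives} the $\hh$-a.e.\ coincidence of $\partial^*\{u>t\}$ and $\{u^-\le t\le u^+\}$ as a consequence of having both equalities (see the lines immediately following the theorem), which is the opposite logical direction to the one you attempt.
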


Thanks to Theorem~\ref{coarea} and the inclusion
\begin{equation*}
\partial^*{\{u>t\}} \subset \{u^-\leq t\leq u^+ \} \quad \text{ for every } 
t\in\bbbr,
\end{equation*}
we deduce that
\begin{equation*}%\label{duequattroter}
\qquad\qquad \hh\Bigl(\{u^-\leq t\leq u^+ \}\setminus
\bigl(\partial^*\{u>t\}\bigr)\Bigr)=0
\qquad \hbox{\rm for $\LLU$-a.e. $t\in\bbbr.$}
\end{equation*}

Specializing the coarea formula to the approximate continuity set $C_u$,
%$C_u := \Omega\setminus S_u$,
and using the inclusion
\begin{equation}\label{f:incl1}
\partial^* \{u > t\} \cap C_u
\subseteq \{x\in C_u\colon \ut(x) = t\}\,,
%\Omega \setminus S_u \cap\partial^*\{u>t\} \subset \{{\widetilde u}=t\} \quad {\rm for \,
%every}\; t\in\bbbr\,.
\end{equation}
we also get
\begin{equation}\label{f:incl2}
\qquad\qquad \hh\Bigl(\{x\in C_u\colon \ut(x)=t \}\setminus
\bigl(C_u\cap\partial^*\{u>t\}\bigr)\Bigr)=0
\qquad \hbox{\rm for $\LLU$-a.e. $t\in\bbbr.$}
\end{equation}

%%%%%%%%%%%%%%%%%%%%%%%%%%%%%%%%%%%%%%

\section{The pairing measure and its representation}

In order to give the notion of pairing between divergence-measure fields and $BV$ functions, we need a particular subset of the $BV$ space, previously introduced in \cite{CDM}.

\begin{definition}\label{d:BVA}
Given \(\A\in\mathcal{DM}^{\infty}_{\rm loc}(\Omega)\), we define:
\begin{gather*}
\BVA := \left\{
u\in BV(\Omega):\ u^* \in L^1(\Omega, |\Div\A|)
\right\}\,,
\\
\BVAlocloc := \left\{
u\in BV_{\rm loc}(\Omega):\ u^* \in L^1_{\rm loc}(\Omega, |\Div\A|)
\right\}\,.
\end{gather*}
\end{definition}
We remark that $|\Div\A| \ll \Haus{N-1}$ and $u^*$ is defined
$\Haus{N-1}$-a.e.\ in $\Omega$,
hence these definitions are well-posed.

We introduce now the general notion of pairing between a divergence-measure field and a suitable $BV$ function (see \cite[Section 2.5 and Theorem 4.12]{CD3}).

\begin{definition}[Pairing]\label{d:pair}
The pairing between a vector field $\A\in \mathcal{DM}^{\infty}_{\rm loc}(\Omega)$
and a function $u\in\BVAlocloc$
is the distribution
$\pair{\A,Du}\colon C^\infty_c(\Omega) \to \R$ 
acting as
\begin{equation*}%\label{f:gpairing}
\pscal{\pair{\A,Du}}{\varphi}:= -\intA{\prec{u}\,\varphi}- 
\intom{u\A\cdot\nabla\varphi}\,,
\quad \text{ for }
\varphi\in C^\infty_c(\Omega)\,.
\end{equation*}
\end{definition}

\begin{remark}
Thanks to \cite[Lemma 3.2]{CDM}, we know that, for all Borel function $\lambda_1, \lambda_2 : \Omega \to [0, 1]$, we have \(
\prec[\lambda_1]{u}\in L^1_{\rm{loc}}(\Omega,|\Div \A|)
\)
if and only if
\(
\prec[\lambda_2]{u}\in L^1_{\rm{loc}}(\Omega,|\Div \A|)
\). Therefore one could replace $u^*$ with $u^{\lambda}$ in the definitions of $\BVA$ and $\BVAlocloc$, and obtain the same spaces.
This observation allows the authors of \cite{CDM} to give a more general definition of pairing involving, instead of $u^*$, the $\lambda$-representative $u^{\lambda}$ given by \eqref{f:pr}: more precisely, they define the $\lambda$-pairing $\pair{\A,Du}_\lambda$, acting as
\begin{equation} \label{def:lambda_pair}
\pscal{(\A,Du)_\lambda}{\varphi}:= -\intA{u^\lambda\,\varphi}- 
\intom{u\A\cdot\nabla\varphi}\,,
\quad \text{ for }
\varphi\in C^\infty_c(\Omega)\,.
\end{equation}
Since $(\A, Du)$ and $(\A, Du)_{\lambda}$ differ only on $\Theta_{\A} \cap J_u$, by \cite[Proposition 4.4]{CDM}, we are going to state our results for the standard pairing $(\A, Du)$, underlining possible differences only whenever they appear.\footnote{We point out that in \cite{CDM} the authors denote by $(\A, Du)_*$ the standard pairing (for $\lambda \equiv 1/2$), whereas we use the classical notation.}
\end{remark}

The relevant properties of the pairings are recalled in the following proposition, which is the combination of \cite[Theorem 4.12]{CD3} and \cite[Proposition 2.2 and Corollary 2.3]{MR3939259}.

\begin{proposition}\label{p:pairing0}
Let $\A\in \mathcal{DM}^{\infty}_{\rm loc}(\Omega)$ and $u\in\BVAlocloc$. Then $\pair{\A,Du}$ is a Radon measure in $\Omega$, and 
the equation
\begin{equation*}%\label{f:servi}
\Div(u\A)=\prec{u}\Div\A+\pair{\A,Du}
\end{equation*}
holds in the sense of Radon measures in $\Omega$.
Moreover, $\pair{\A,Du}$ is absolutely continuous
with respect to $|Du|$, with
\begin{equation} \label{eq:pairing_tot_var_bound}
|\pair{\A,Du}| \res \Omega' \leq \|\A\|_{L^{\infty}(\Omega'; \R^N)} |Du| \res \Omega'
\end{equation}  
for every open set $\Omega' \Subset \Omega$.
\end{proposition}

In what follows we will write
\begin{equation*}%\label{f:polar}
\pair{\A,Du}=\polar(\A,Du,x)|Du|,
\end{equation*}
where $\polar(\A,Du,\cdot)$ denotes the Radon--Nikod\'ym derivative of
$\pair{\A,Du}$ with respect to $|Du|$, and our aim is to represent $\polar(\A,Du,\cdot)$ in terms of the weak normal traces of the field $\A$ on the level sets of $u$.

We recall a remarkable decomposition result for the pairing measure, \cite[Theorem 4.12]{CD3}.

\begin{theorem} \label{t:pairing}
Let \(\A\in\mathcal{DM}^{\infty}_{\rm loc}(\Omega)\) and $u\in\BVAlocloc$.
Then the decomposition of the pairing measure into its absolutely continuous, Cantor and jump parts, 
$$(\A, Du) = (\A, Du)^a + (\A, Du)^c + (\A, Du)^j,$$ 
satisfies the following properties:
\begin{itemize}
	\item[(i)]
	absolutely continuous part: 
	\((\A, Du)^a = \A \cdot \nabla u\, \LLN\);
	%\(\LLN\)-almost everywhere in \(\Omega\);
	
	\item[(ii)]
	jump part:
	\(\displaystyle
	(\A, Du)^j = \Trace[*]{\A}{J_u} |D^j u| =
	%\frac{\Trp{\A}{J_u}+\Trm{\A}{J_u}}{2}
\Trace[*]{\A}{J_u}
	\, (u^+-u^-) \, \hh \res J_u
	\);
	%\(\hh\)-almost everywhere in \(J_u\).

	\item[(iii)]
	Cantor part:
%	if, in addition, 
%	$|D^c u| (S_{\A}) = 0$,
%	where \(S_{\A}\) is the approximate discontinuity set  of \(\A\),
%	then
	\((\A, Du)^c\res(\Omega\setminus S_{\A}) = 
	\widetilde{\A} \cdot D^c u \res(\Omega\setminus S_{\A})\).
	%\(|D^d u|\)-almost everywhere in \(\Omega\).	
\end{itemize}
\end{theorem}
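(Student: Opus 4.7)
The plan is to analyze the three mutually singular parts of $Du$ separately, exploiting the Leibniz-type identity $\Div(u\A) = u^* \Div\A + (\A, Du)$ from Proposition \ref{p:pairing0} together with the decomposition of $\Div\A$ in Proposition \ref{p:basicVF}. Since $(\A, Du) \ll |Du|$, the three components $(\A, Du)^a$, $(\A, Du)^j$, $(\A, Du)^c$ are themselves mutually singular, each controlled by the corresponding piece of $|Du|$. A truncation step reduces to $u \in L^\infty$, after which one can use approximation and blow-up techniques on each part separately.

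For the absolutely continuous part (i), I mollify: with $u_\varepsilon := u * \rho_\varepsilon$, one has $(\A, Du_\varepsilon) = \A \cdot \nabla u_\varepsilon\, \LLN$ at the smooth level. Letting $\varepsilon \to 0$, $(\A, Du_\varepsilon) \weakstarto (\A, Du)$ as measures, and Besicovitch differentiation at $\LLN$-Lebesgue points of both $\A$ and $\nabla u$ (a set of full Lebesgue measure) identifies the absolutely continuous component as $\A \cdot \nabla u\, \LLN$.

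For the jump part (ii), I restrict the Leibniz identity to $J_u$. By \eqref{f:trA} applied to $\A$ with the convention $u^i = u^+$, one has $\Div\A \res J_u = [\Trp{\A}{J_u} - \Trm{\A}{J_u}]\, \Haus{N-1}\res J_u$, so that $u^*\Div\A \res J_u = \tfrac{u^+ + u^-}{2}[\Trp{\A}{J_u} - \Trm{\A}{J_u}]\, \Haus{N-1}\res J_u$. On the other hand, $u\A \in \DMlocloc$ (with divergence computable via Proposition \ref{p:pairing0}), and a direct limit calculation in \eqref{f:deftrace}, combined with the locality of normal traces \eqref{locality_traces} and the fact that the interior/exterior approximate values of $u$ are $u^+$ and $u^-$, yields $\Trp{u\A}{J_u} = u^+ \Trp{\A}{J_u}$ and $\Trm{u\A}{J_u} = u^- \Trm{\A}{J_u}$. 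Subtracting and applying the algebraic identity
\begin{equation*}
u^+ \Trp{\A}{J_u} - u^- \Trm{\A}{J_u} - \tfrac{u^+ + u^-}{2}\bigl(\Trp{\A}{J_u} - \Trm{\A}{J_u}\bigr) = \Trace[*]{\A}{J_u}\,(u^+ - u^-),
\end{equation*}
one obtains $(\A, Du)^j = \Trace[*]{\A}{J_u}\,(u^+ - u^-)\, \Haus{N-1}\res J_u = \Trace[*]{\A}{J_u}\, |D^j u|$.

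For the Cantor part (iii), restricted to $\Omega \setminus S_\A$, I carry out a blow-up/differentiation argument. Setting $\A_\varepsilon := \A * \rho_\varepsilon$, which is smooth with $\A_\varepsilon \to \widetilde\A$ pointwise on $\Omega \setminus S_\A$, the smooth pairing $(\A_\varepsilon, Du) = \A_\varepsilon \cdot Du$ passes to the limit as a Radon measure; at $|D^c u|$-a.e.\ $x \in \Omega \setminus S_\A$, the ratios $|D^a u|(B_r(x))/|D^c u|(B_r(x))$ and $|D^j u|(B_r(x))/|D^c u|(B_r(x))$ vanish by mutual singularity, so the density of $(\A, Du)$ with respect to $|D^c u|$ at $x$ reduces to $\widetilde{\A}(x) \cdot \nu_u(x)$, yielding the stated formula. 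The main obstacle is part (ii): one must rigorously justify the factorization of the normal traces of $u\A$ on $J_u$, which relies on locality \eqref{locality_traces}, Proposition \ref{p:pairing0}, and, in the unbounded case, a truncation argument along $T_k u$ using $u^* \in L^1_{\rm loc}(|\Div\A|)$ to pass to the limit.
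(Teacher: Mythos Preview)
The paper does not prove Theorem~\ref{t:pairing}; it is simply recalled from \cite[Theorem~4.12]{CD3}, so there is no in-paper argument to compare your sketch against. That said, your outline has the right architecture (Leibniz rule plus trace factorization on $J_u$, blow-up elsewhere) but contains genuine gaps in each part.

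In (i), mollifying $u$ gives $(\A,Du_\varepsilon)=\A\cdot\nabla u_\varepsilon\,\LLN\weakstarto(\A,Du)$, but $\nabla u_\varepsilon=(Du)\ast\rho_\varepsilon$ picks up the singular part of $Du$, so $\A\cdot\nabla u_\varepsilon$ does not converge to $\A\cdot\nabla u$ in $L^1_{\rm loc}$; the weak${}^*$ limit is the full pairing and Besicovitch differentiation of it does not by itself identify the density as $\A\cdot\nabla u$. A direct blow-up at approximate differentiability points of $u$ and Lebesgue points of $\A$, or mollification of $\A$ rather than $u$, is what is actually needed.

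In (ii), the factorization $\Trp{u\A}{J_u}=u^+\,\Trp{\A}{J_u}$ is exactly the right key step, but it is not a ``direct limit calculation in~\eqref{f:deftrace}'': the distributional definition of the trace does not factor automatically. This is a separate product-rule lemma for normal traces (proved e.g.\ via one-sided weak${}^*$ limits as in \cite{AmbCriMan}, or in \cite{CD3} itself); once it is available your algebraic identity finishes the job.

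In (iii), the double limit $\varepsilon\to0$, $r\to0$ is not justified, and the natural linearity shortcut $(\A,Du)=\widetilde\A(x)\cdot Du+(\A-\widetilde\A(x),Du)$ also fails: the bound from Proposition~\ref{p:pairing0} controls the error only by $\|\A-\widetilde\A(x)\|_{L^\infty(B_r(x))}\,|Du|(B_r(x))$, whereas $x\notin S_{\A}$ only gives decay of the $L^1$-average of $|\A-\widetilde\A(x)|$, not of its sup norm. The proof in \cite{CD3} (and the alternative derivation in this paper via Theorem~\ref{t:repr} and Corollary~\ref{c:repr}) relies on finer tools---the coarea formula for the pairing and pointwise representations of normal traces on level sets---rather than a naive blow-up of the field.
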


In addition, we denote by $(\A, Du)^d$ the {\em diffuse part} of the pairing measure; that is,
$$ (\A, Du)^d = (\A, Du)^a + (\A, Du)^c.$$

\begin{remark}
Proposition \ref{p:pairing0} can be seen as the particular case $\lambda \equiv 1/2$ of \cite[Proposition 4.4]{CDM}, which applies to the general $\lambda$-pairing given by \eqref{def:lambda_pair} and provides an estimate analogous to \eqref{eq:pairing_tot_var_bound}: given any Borel function $\lambda : \Omega \to [0, 1]$, we have
\begin{equation*}
|(\A, Du)_{\lambda}| \res \Omega' \leq \|\A\|_{L^{\infty}(\Omega'; \R^N)} |Du| \res \Omega'
\end{equation*} 
for every open set $\Omega' \Subset \Omega$. We take the chance to provide a new proof of this bound, given that there is a minor gap in the proof of \cite[Proposition 4.4, eq. (4.4)]{CDM}. We notice that, by \cite[Proposition 4.4, eq. (4.3)]{CDM} and by \eqref{f:trA}, we obtain 
\begin{align*}
(\A, Du)_{\lambda} & = (\A, D u) + \left ( \frac{1}{2} - \lambda \right) (u^+ - u^-) \Div \A \res J_u \\
& = (\A, D u) + \left ( \frac{1}{2} - \lambda \right) (u^+ - u^-) \left ( \Trp{\A}{J_u} - \Trm{\A}{J_u} \right ) \Haus{N-1} \res J_u \\
& = (\A, D u) + \left ( \frac{1}{2} - \lambda \right) \left ( \Trp{\A}{J_u} - \Trm{\A}{J_u} \right ) |D^j u|.
\end{align*}
%This equality, combined with the bounds \eqref{improved_bound} and \eqref{eq:pairing_tot_var_bound}, implies
%\begin{align*}
%|(\A, Du)_\lambda| \res \Omega' & \le \|\A\|_{L^{\infty}(\Omega'; \R^N)} |D u| \res \Omega'  +  \left | \frac{1}{2} - \lambda \right | 2 \|\A\|_{L^{\infty}(\Omega'; \R^N)}  |D^j u| \res \Omega'  \\
%& \le (1 + |1 - 2 \lambda| ) \|\A\|_{L^{\infty}(\Omega'; \R^N)} |D u| \res \Omega'  \le 2 \|\A\|_{L^{\infty}(\Omega'; \R^N)} |D u| \res \Omega'
%\end{align*}
%for every open set $\Omega' \Subset \Omega$. 
This implies that the diffuse part of the $\lambda$-pairing satisfies
\begin{equation*}
(\A, Du)_\lambda^d = (\A, Du)^d,
\end{equation*}
while the singular part is given by
\begin{equation*}
(\A, Du)_\lambda^j = (\A, D u)^j + \left ( \frac{1}{2} - \lambda \right) \left ( \Trp{\A}{J_u} - \Trm{\A}{J_u} \right ) |D^j u|.
\end{equation*}
Hence, we can argue as in the proof of \cite[Proposition 4.7]{CDM}: we exploit \cite[Theorem 3.3]{CD3} to get
\begin{align*}
(\A, Du)_{\lambda} & = (\A, D u)^d + \left ( (1 - \lambda) \Trp{\A}{J_u} + \lambda \Trm{\A}{J_u} \right ) |D^j u| \\
& = (\A, D u) \res (\Omega \setminus J_u) + \left ( (1 - \lambda) \Trp{\A}{J_u} + \lambda \Trm{\A}{J_u} \right ) |D u| \res J_u,
\end{align*} 
so that, by applying \eqref{improved_bound} and \eqref{eq:pairing_tot_var_bound} (restricted to $\Omega \setminus J_u$), we obtain
\begin{align*}
|(\A, Du)_\lambda| \res \Omega'  & \le \|\A\|_{L^{\infty}(\Omega'; \R^N)} |D u| \res (\Omega' \setminus J_u) + \|\A\|_{L^{\infty}(\Omega'; \R^N)} |D u| \res (\Omega' \cap J_u) \\
& = \|\A\|_{L^{\infty}(\Omega'; \R^N)} |Du| \res \Omega'
\end{align*}
for every open set $\Omega' \Subset \Omega$.
\end{remark}

We point out that Theorem \ref{t:pairing} gives a complete answer concerning the density of the pairing between $\A$ and characteristic functions of sets of finite perimeter.

\begin{corollary}\label{c:paironch}
If \(\A\in\mathcal{DM}^{\infty}_{\rm loc}(\Omega)\) and $E$ is a set of locally finite perimeter in $\Omega$,
then 
\[
\polar(\A,D\chi_E,x)=\Trace[*]{\A}{\partial^* E}(x) \quad 
\text{ for } \Haus{N-1}\text{-a.e. } x\in \partial^* E.
\]
\end{corollary}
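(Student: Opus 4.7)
The plan is to apply Theorem~\ref{t:pairing} to the special case $u = \chi_E$ and recognize that two of the three parts of the pairing measure vanish, so only the jump contribution survives and directly yields the stated formula.

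First I would check admissibility: since $\chi_E^\ast$ takes only the values $0$, $1/2$, $1$, it belongs to $L^\infty(\Omega) \subset L^1_{\mathrm{loc}}(\Omega, |\Div\A|)$, so $\chi_E \in \BVAlocloc$ and the pairing $\pair{\A, D\chi_E}$ is a well-defined Radon measure by Proposition~\ref{p:pairing0}. Next I would eliminate the absolutely continuous and Cantor parts in Theorem~\ref{t:pairing}: for a set of locally finite perimeter, $\nabla \chi_E = 0$ $\LLN$-a.e., so $(\A, D\chi_E)^a = 0$, and $D^c \chi_E = 0$, so $(\A, D\chi_E)^c = 0$ as well. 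Therefore the entire pairing measure coincides with its jump part.

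For the jump part I would use the well-known identification $J_{\chi_E} = \partial^\ast E$ up to $\Haus{N-1}$-negligible sets, together with $\chi_E^+ = 1$ and $\chi_E^- = 0$ on $\partial^\ast E$, which gives $|D^j \chi_E| = \Haus{N-1}\res \partial^\ast E$. Combining with Theorem~\ref{t:pairing}(ii),
\[
\pair{\A, D\chi_E} = \Trace[\ast]{\A}{\partial^\ast E}\, (\chi_E^+ - \chi_E^-)\, \Haus{N-1}\res J_{\chi_E} = \Trace[\ast]{\A}{\partial^\ast E}\, \Haus{N-1}\res \partial^\ast E.
\]
Finally, by De Giorgi's theorem $|D\chi_E| = \Haus{N-1} \res \partial^\ast E$, so dividing the two measures identifies the Radon--Nikod\'ym density as $\polar(\A, D\chi_E, x) = \Trace[\ast]{\A}{\partial^\ast E}(x)$ for $\Haus{N-1}$-a.e.\ $x \in \partial^\ast E$.

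There is no real obstacle here: the corollary is essentially a direct specialization of Theorem~\ref{t:pairing}. The only point demanding a line of care is the orientation convention for $\nu_{\chi_E}$ versus $\nu_{\partial^\ast E}$, which is fixed by the agreement (recalled in Subsection~\ref{ss:BV}) that $\chi_E^i = \chi_E^+ = 1$ so that $\nu_{\chi_E} = \nu_{\partial^\ast E}$ $\Haus{N-1}$-a.e.\ on $\partial^\ast E$; this guarantees that the weak normal traces appearing in the density coincide with those defined on $\partial^\ast E$ via Subsection~\ref{distrtraces}.
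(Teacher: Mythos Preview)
Your proposal is correct and follows exactly the approach the paper intends: the corollary is stated immediately after Theorem~\ref{t:pairing} with the remark that the latter ``gives a complete answer concerning the density of the pairing between $\A$ and characteristic functions of sets of finite perimeter,'' and your argument spells out precisely this specialization. The admissibility check, the vanishing of the absolutely continuous and Cantor parts for $u=\chi_E$, and the identification of the jump part via $J_{\chi_E}=\partial^*E$ (up to $\hh$-null sets) are all the expected ingredients.
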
 

A complete representation can also be given when $\A$ is a $BV$ vector field: we state below a localization of \cite[Remarks 3.4 and 3.6]{CD3}.

\begin{corollary}\label{r:chiE}
If $\A\in BV_{\rm loc}(\Omega; \R^N)\cap L^{\infty}_{\rm loc}(\Omega; \R^N)$ and $u\in BV_{\rm loc}(\Omega)\cap L^{\infty}_{\rm loc}(\Omega)$, then $\pair{\A, Du}=\A^*\cdot Du$ in the sense of Radon measures in $\Omega$.

In particular, $\Trace[*]{\A}{J_u}(x)=\A^{*}(x) \cdot \nu_u(x)$ for $\Haus{N-1}$-a.e. $x\in J_u$.
\end{corollary}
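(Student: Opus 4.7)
The plan is to match the decomposition of $\pair{\A, Du}$ given by Theorem \ref{t:pairing} with the analogous decomposition of the measure $\A^* \cdot Du$, and then to deduce the pointwise identification of $\Trace[*]{\A}{J_u}$ as a byproduct of the matching on the jump part. The main obstacle is precisely the jump part, which requires relating the weak normal traces of $\A$ as a divergence-measure field to the one-sided BV approximate limits of $\A$.

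For the absolutely continuous part, Theorem \ref{t:pairing}(i) gives $\pair{\A, Du}^a = \A \cdot \nabla u\, \Leb{N}$; since $\A \in L^\infty_{\rm loc}$ and hence $\A = \A^*$ $\Leb{N}$-a.e., this coincides with $\A^* \cdot D^a u$. For the Cantor part, the hypothesis $\A \in BV_{\rm loc}$ ensures that the approximate discontinuity set $S_{\A}$ is countably $\Haus{N-1}$-rectifiable, hence $\Haus{N-1}$-$\sigma$-finite; consequently $|D^c u|(S_{\A}) = 0$, so $D^c u$ is concentrated on $\Omega \setminus S_{\A}$, where $\tilde \A = \A^*$. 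Theorem \ref{t:pairing}(iii) then upgrades to the global identity $\pair{\A, Du}^c = \A^* \cdot D^c u$ on $\Omega$.

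For the jump part, Theorem \ref{t:pairing}(ii) yields $\pair{\A, Du}^j = \Trace[*]{\A}{J_u} |D^j u|$, so that the whole statement reduces to proving the pointwise identity $\Trace[*]{\A}{J_u}(x) = \A^*(x) \cdot \nu_u(x)$ for $\Haus{N-1}$-a.e.\ $x \in J_u$. By the locality property \eqref{locality_traces} and the covering of the oriented rectifiable set $J_u$ by pieces $N_k \subset \partial \Omega_k$ recalled in Section \ref{distrtraces}, it suffices to show, on each auxiliary $C^1$ domain $\Omega_k$, that $\Tr(\A, \partial \Omega_k)(x) = \A^{i}(x) \cdot \nu_{\Omega_k}(x)$ for $\Haus{N-1}$-a.e.\ $x \in \partial \Omega_k$, where $\A^{i}$ is the one-sided approximate limit of the $BV$ vector field $\A$ on $\partial \Omega_k$ from the side of the interior normal $\nu_{\Omega_k}$. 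This identification follows by comparing the defining formula \eqref{f:deftrace} of $\Tr(\A, \partial \Omega_k)$ with the classical $BV$ Gauss--Green formula applied to $\A$ on the $C^1$ set $\Omega_k$, and exploiting the arbitrariness of the test function $\varphi$. Transferring the identification to $\Sigma = J_u$ gives $\Trp{\A}{J_u}(x) = \A^+(x) \cdot \nu_u(x)$ and $\Trm{\A}{J_u}(x) = \A^-(x) \cdot \nu_u(x)$, where $\A^{\pm}$ denote the one-sided BV limits of $\A$ with respect to $\nu_u$; averaging and using $\A^*(x) = (\A^+(x) + \A^-(x))/2$ on $J_{\A}$ (while $\A^+ = \A^- = \tilde\A = \A^*$ $\Haus{N-1}$-a.e.\ on $J_u \setminus J_{\A}$) closes the identification.

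Combining the three parts, and recalling the polar decomposition $Du = \nabla u\, \Leb{N} + D^c u + \nu_u |D^j u|$, one obtains $\pair{\A, Du} = \A^* \cdot Du$ as Radon measures on $\Omega$. The second assertion is precisely the identification carried out in the jump step. The whole argument is, in essence, a local reformulation of \cite[Remarks 3.4, 3.6]{CD3}.
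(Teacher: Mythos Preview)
Your proposal is correct and follows precisely the route the paper indicates: the corollary is stated in the paper without proof, as a localization of \cite[Remarks~3.4 and~3.6]{CD3}, and your argument---matching the three parts of the decomposition in Theorem~\ref{t:pairing} with those of $\A^*\cdot Du$, using the rectifiability of $S_{\A}$ for the Cantor part and the identification of the divergence-measure normal trace with the $BV$ one-sided trace dotted with the normal for the jump part---is exactly that reformulation. There is nothing to add.
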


Theorem~\ref{t:pairing} gives a complete representation of the Cantor
part of the pairing measure only if $|D^c u| (S_{\A}) = 0$.
This requirement
could be an effective restriction to the applicability of Theorem~\ref{t:pairing}.
Indeed, although $\LLN(S_{\A}) = 0$ thanks to Lebesgue differentiation theorem,
the Hausdorff dimension of $S_{\A}$ can be equal to $N$
(hence $|D^c u|(S_{\A})$ can be arbitrarily large),
as it is shown in the following example.

\begin{example}\label{dimensionSA}
For every $N \ge 2$ we construct a vector field \(\A\in\mathcal{DM}^{\infty}(\R^N)\)
such that $\Div \A = 0$ and $\dim_{\Haus{}}(S_{\A}) = N$, where $\dim_{\Haus{}}$ is the Hausdorff dimension.

As a first step we exhibit a set $E \subset \R^{N - 1}$ such that $\dim_{\Haus{}}(\partial^{M} E) = N - 1$, where $\partial^{M} E$ is the measure theoretic boundary of $E$ defined in \eqref{f:MTB}.

%Taking for granted the existence of such set $E$,
%let us define $\A(x) \equiv \A(x', x_{N}) := (0, \dots, 0, \chi_{E}(x'))$,
%$x = (x', x_N) \in \R^N$. 
%It is clear that $\Div \A = 0$, so that $\A \in \DM(\R^N)$ 
%%for any $p \in [1, \infty)$, 
%and $S_{\A} = \partial^{*} E \times \R$. 
%Hence, it follows that $\dim_{\Haus{}}(S_{\A}) = N - 1 + s$, see e.g.\ \cite[Corollary 7.4]{Falconer}.
%Since this construction can be done for every $s \in (0, 1)$,
%the conclusion follows.

We start by considering suitable fat Cantor sets on $\R$. 
Following the construction of Falconer \cite[Example 4.5]{Falconer}, 
for any $\lambda \in (0, 1)$ we can construct a middle third Cantor set on 
$[0, 1]$, removing at each step a proportion $\lambda$ from the intervals. 
In this way, for any $j \ge 0$ we remove from $[0, 1]$ a family of 
middle open intervals $\{I_{j}^{k}\}_{k = 1}^{2^{j}}$ with length 
\[
|I_{j}^{k}| = \lambda \frac{(1 - \lambda)^{j}}{2^{j}}.
\] 
Let us consider the union of the intervals corresponding to
even generations $j$:
\[
E_\lambda := \bigcup_{j=0}^{\infty}\left( \bigcup_{k=1}^{2^{2j}} I_{2j}^k
\right).
\]
Then the fat Cantor set 
\[
C_{\lambda} := [0,1]\setminus \bigcup_{j=0}^{\infty}\left( \bigcup_{k=1}^{2^{j}} I_{j}^k
\right),
\]
coincides with $\partial^M E_\lambda$.
Specifically, reasoning as in \cite[Example 3.5]{CD3},
we can prove that
\[
0 < \liminf_{r\downarrow 0} \frac{|E_\lambda \cap B_r(x)|}{2r}
\leq
\limsup_{r\downarrow 0} \frac{|E_\lambda \cap B_r(x)|}{2r} < 1
\qquad \forall x\in C_\lambda\,.
\]
Moreover, it is known that 
$\dim_{\Haus{}}(C_{\lambda}) = \frac{\log{2}}{\log{\left (\frac{2}{1 - \lambda} \right )}}$ (see for instance \cite[Example 4.5]{Falconer}),
so that, choosing $\lambda = 1 - 2^{1-1/s}$, we have
$\dim_{\Haus{}}(\partial^M E_{\lambda}) = \dim_{\Haus{}}(C_{\lambda}) = s$.

\smallskip
We can now set
\[
F := \bigcup_{m = 1}^{+ \infty} (2m + E_{2^{-m}}) \text{ and } E = F \times \R^{N - 2},
\]
and conclude that 
\begin{equation} 
\label{full_dim_E} 
\dim_{\Haus{}}(\partial^{M} E) = N-1,
\end{equation} 
as claimed.
Indeed, it is clear that $\dim_{\Haus{}}(\partial^{M} F) \le 1$, and
we have
\[
\partial^{M} F = \bigcup_{m = 1}^{+ \infty} (2m + \partial^{M}E_{2^{-m}}),
\] 
which implies
$
\Haus{\alpha}(\partial^{M} F) \ge \Haus{\alpha}(\partial^{M} E_{2^{-m}})$,
for any $m \ge 1$ and $\alpha \in (0, 1)$. 
Since
\[
\dim_{\Haus{}}(\partial^{M} E_{2^{-m}}) =  \frac{\log{2}}{\log{\left (\frac{2}{1 - 2^{-m}} \right )}}, 
\]
there exists $m$ large enough such that 
$\dim_{\Haus{}}(\partial^{M} E_{2^{-m}}) > \alpha, $
for any fixed $\alpha \in (0, 1)$. This shows that $\dim_{\Haus{}}(\partial^{M} F) = 1$, and so we obtain \eqref{full_dim_E}, by \cite[Corollary 7.4]{Falconer}.

Finally,
let us define $\A(x) \equiv \A(x', x_{N}) := (0, \dots, 0, \chi_{E}(x'))$, where
$x = (x', x_N) \in \R^N$. 
It is clear that $\Div \A = 0$, so that $\A \in \DM(\R^N)$ 
and $S_{\A} = \partial^{M} E \times \R$. 
Hence, it follows that $\dim_{\Haus{}}(S_{\A}) = N$, see again \cite[Corollary 7.4]{Falconer}.
\end{example}

\bigskip

Aiming to give a general representation of the Cantor part of the pairing measure
also on $S_{\A}$, 
we are going to use the following coarea formula for 
the pairing, for which we refer to \cite[Theorem 4.2]{CD3} and \cite[Theorem 5.1]{CDM}.

\begin{theorem} \label{t:coarea}
Let $\A\in\DMlocloc[\Omega]$ and let $u\in\BVAlocloc$.
	Then
	\begin{equation*}%\label{f:gsplit}
	\pscal{\pair{\A, Du}}{\varphi} = \int_{\R} \pscal{\pair{\A, 
	D\chiut}}{\varphi}\, dt,
	\qquad\forall \varphi\in C_c(\Omega)\,,
	\end{equation*}
and, for every Borel set $B\Subset\Omega$,
	\begin{equation*}%\label{f:gsplitB}
	\pair{\A, Du}(B) = \int_{\R} \pair{\A, 
	D\chiut}(B)\, dt.
	%\qquad\forall \varphi\in C_0(\Omega)\,.
	\end{equation*}
\end{theorem}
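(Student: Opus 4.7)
The natural strategy is to establish the identity first on test functions $\varphi\in C^\infty_c(\Omega)$ via the definition of the pairing (Definition~\ref{d:pair}), and then to extend it to Borel sets as Radon measures. I would first treat the bounded case $u\in BV(\Omega)\cap L^\infty(\Omega)$ with $|u|\le k$. The key ingredient is the layer-cake representation: at every Lebesgue point $x$ of $u$,
\[
u(x)=\int_{-k}^{k}\bigl(\chi_{\{u>t\}}(x)-\chi_{(-\infty,0)}(t)\bigr)\,dt,
\]
and, by a direct pointwise check at jump points using $\chi^*_{\{u>t\}}(x)=1/2$ for $u^-(x)<t<u^+(x)$, the analogous identity for precise representatives
\[
u^*(x)=\int_{-k}^{k}\bigl(\chi^*_{\{u>t\}}(x)-\chi_{(-\infty,0)}(t)\bigr)\,dt \qquad \Haus{N-1}\text{-a.e.\ in }\Omega.
\]

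Substituting these into $\pscal{\pair{\A,Du}}{\varphi}$, I would apply Fubini on the bounded rectangle $(-k,k)\times\supp\varphi$ (legitimate since the integrand is bounded and $|\Div\A|(\supp\varphi)<\infty$), and then exploit the cancellation
\[
-\int_{-k}^{0}\int_\Omega\A\cdot\nabla\varphi\,dx\,dt=\int_{-k}^{0}\int_\Omega\varphi\,d\Div\A\,dt
\]
to eliminate the two constant contributions produced by the $\chi_{(-\infty,0)}(t)$ term. What remains is precisely $\int_{-k}^{k}\pscal{\pair{\A,D\chi_{\{u>t\}}}}{\varphi}\,dt$, and since $\chi_{\{u>t\}}$ is constant ($0$ or $1$) for $|t|>k$ the pairing vanishes there, so the integration range extends to all of $\R$.

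To remove the boundedness assumption and reach a general $u\in\BVAlocloc$, I would fix $\varphi\in C^\infty_c(\Omega)$ and apply the bounded-case identity to the truncations $T_{-k,k}u:=\max(-k,\min(k,u))$. As $k\to\infty$, $T_{-k,k}u\to u$ in $L^1(\supp\varphi)$ and $(T_{-k,k}u)^*\to u^*$ pointwise $\Haus{N-1}$-a.e.\ with $|(T_{-k,k}u)^*|\le|u^*|$; by dominated convergence, using $u^*\in L^1_{\rm loc}(\Omega,|\Div\A|)$ from Definition~\ref{d:BVA} for the $|\Div\A|$-integral, the left-hand side converges to $\pscal{\pair{\A,Du}}{\varphi}$. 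On the right, $\{T_{-k,k}u>t\}=\{u>t\}$ for $t\in(-k,k)$, and integrability in $t$ follows from the bound $|\pscal{\pair{\A,D\chi_{\{u>t\}}}}{\varphi}|\le\|\varphi\|_\infty\|\A\|_\infty|D\chi_{\{u>t\}}|(\supp\varphi)$ (Proposition~\ref{p:pairing0}) together with the classical coarea formula $\int_\R|D\chi_{\{u>t\}}|(\supp\varphi)\,dt=|Du|(\supp\varphi)<\infty$.

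For the measure-valued statement, the distributional identity on $C^\infty_c(\Omega)$ extends to $C_c(\Omega)$ by density, and by the Riesz representation theorem $\pair{\A,Du}$ and $\int_\R\pair{\A,D\chi_{\{u>t\}}}\,dt$ coincide as Radon measures; this yields $\pair{\A,Du}(B)=\int_\R\pair{\A,D\chi_{\{u>t\}}}(B)\,dt$ for every Borel set $B\Subset\Omega$, with $t$-measurability of the integrand controlled by the same coarea bound. I expect the main technical obstacle to be the truncation step: ensuring that $(T_{-k,k}u)^*\to u^*$ in an $\Haus{N-1}$-a.e.\ sense compatible with $|\Div\A|$ requires carefully relating the jump structure and precise representatives of $T_{-k,k}u$ to those of $u$ on $\supp\varphi$, a point where the assumption $u^*\in L^1_{\rm loc}(\Omega,|\Div\A|)$ is used in an essential way.
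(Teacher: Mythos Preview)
The paper does not prove Theorem~\ref{t:coarea}: it is quoted from \cite[Theorem~4.2]{CD3} and \cite[Theorem~5.1]{CDM}, so there is no in-paper argument to compare against. Your layer-cake--Fubini--truncation strategy is exactly the standard route used in those references, and the outline is sound.

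One technical correction is needed in the truncation step. The domination $|(T_{-k,k}u)^*|\le|u^*|$ is false in general: at a jump point with $u^+(x)=1$, $u^-(x)=-1$ one has $u^*(x)=0$ while $(T_{-k,k}u)^*(x)=0$ only for $k\ge1$, and more seriously for asymmetric jumps (say $u^+=3$, $u^-=-1$, $k=2$) one gets $(T_{-k,k}u)^*=\tfrac12$ while $u^*=1$, so the pointwise inequality can go either way. The correct integrable majorant is $\tfrac12(|u^+|+|u^-|)$, since
\[
|(T_{-k,k}u)^*|=\left|\tfrac12\bigl(T_{-k,k}(u^+)+T_{-k,k}(u^-)\bigr)\right|\le\tfrac12\bigl(|u^+|+|u^-|\bigr),
\]
and both $u^+$ and $u^-$ lie in $L^1_{\rm loc}(\Omega,|\Div\A|)$ by the equivalence of $\lambda$-representatives recalled just after Definition~\ref{d:BVA} (i.e.\ \cite[Lemma~3.2]{CDM}). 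With this fix the dominated-convergence argument goes through, and the rest of your proposal is correct.
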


Thanks to this result, we obtain the following straightforward localization of \cite[Proposition~5.2]{CDM} in the case $\lambda \equiv 1/2$.

\begin{proposition}
\label{p:theta}
Let $\A\in\mathcal{DM}^{\infty}_{\rm loc}(\Omega)$ and $u\in BV_{\rm loc}(\Omega) \cap L^{\infty}_{\rm loc}(\Omega)$.
Then for $\mathcal{L}^1$-a.e. $t\in\R$ we have
\begin{equation*}
\polar(\A, Du, x) =
\polar(\A, D\chiut, x)
\quad
\text{for \(|D\chiut|\)-a.e.}\ x\in\Omega\,.
\end{equation*}
\end{proposition}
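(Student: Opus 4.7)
The plan is to combine the coarea formula for the pairing (Theorem~\ref{t:coarea}) with the classical coarea formula (Theorem~\ref{coarea}) through a truncation argument: this produces an integral identity over arbitrary intervals $(a,b)$ and then forces the pointwise-in-$t$ equality of densities. For $a<b$ in $\R$, set $u_{a,b}:=(u\wedge b)\vee a$ and $w:=u-u_{a,b}$; both are bounded $BV_{\rm loc}$ functions and hence lie in $\BVAlocloc$. Moreover, $\{u_{a,b}>t\}=\{u>t\}$ for every $t\in(a,b)$, while $\chi_{\{u_{a,b}>t\}}$ is either identically $0$ or identically $1$ for $t\notin[a,b]$, so its distributional gradient vanishes there.

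The key step is the locality claim $\polar(\A,Du_{a,b},x)=\polar(\A,Du,x)$ for $|Du_{a,b}|$-a.e.\ $x\in\Omega$. To prove it, set $F:=\{a<\ut<b\}\subset C_u$. Applying the BV chain rule to the Lipschitz truncation $T_{a,b}(s):=(s\wedge b)\vee a$, one verifies that $\nabla w=0$ $\LLN$-a.e.\ on $F$, that $D^c w=\chi_{\R\setminus(a,b)}(\ut)\,D^c u$ vanishes on $F$, and that $J_w\subseteq J_u$ is disjoint from $F$ since $F\subset C_u$; altogether $|Dw|(F)=0$. The bound $|\pair{\A,Dw}|\leq\|\A\|_{L^\infty}|Dw|$ from Proposition~\ref{p:pairing0} then yields $|\pair{\A,Dw}|(B)=0$ for every Borel $B\subseteq F$. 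Using the linearity $\pair{\A,Du}=\pair{\A,Du_{a,b}}+\pair{\A,Dw}$, we obtain the measure identities $\pair{\A,Du}=\pair{\A,Du_{a,b}}$ and $|Du|=|Du_{a,b}|$ on Borel subsets of $F$; by Radon--Nikod\'ym uniqueness the densities agree there $|Du_{a,b}|$-a.e. On $J_{u_{a,b}}\subseteq J_u$, Theorem~\ref{t:pairing}(ii) together with the locality of weak normal traces \eqref{locality_traces} (recalling $\nu_{u_{a,b}}=\nu_u$) shows both densities equal $\Trace[*]{\A}{J_u}$; outside $F\cup J_{u_{a,b}}$ the measure $|Du_{a,b}|$ vanishes.

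With the locality claim available, Theorem~\ref{t:coarea} applied to $u_{a,b}$ yields $\pair{\A,Du_{a,b}}(B)=\int_a^b\pair{\A,D\chiut}(B)\,dt$ for every Borel $B\Subset\Omega$. Rewriting the left-hand side via the locality claim and the classical coarea identity $|Du_{a,b}|=\int_a^b|D\chiut|\,dt$ applied to the bounded integrand $\polar(\A,Du,\cdot)$, and expanding the right-hand side through the slice densities $\polar(\A,D\chiut,\cdot)$, we arrive at
\[
\int_a^b\!\int_B \bigl[\polar(\A,Du,x)-\polar(\A,D\chiut,x)\bigr]\,d|D\chiut|(x)\,dt=0
\]
for every $a<b$ and every Borel $B\Subset\Omega$. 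Fixing such a $B$, the inner integral is an $L^1(\R)$ function of $t$ (since $\int_\R|D\chiut|(B)\,dt=|Du|(B)<\infty$) whose integral over each open interval vanishes, hence it equals $0$ for $\LLU$-a.e.\ $t$. Taking a countable family of open balls with rational center and radius contained in $\Omega$ (which determines any signed Radon measure on $\Omega$) and intersecting the corresponding full-measure sets of $t$, we conclude that for $\LLU$-a.e.\ $t$ the signed measures $\polar(\A,Du,\cdot)|D\chiut|$ and $\polar(\A,D\chiut,\cdot)|D\chiut|$ coincide on $\Omega$, giving the desired pointwise identification. The main obstacle is the locality claim, since Theorem~\ref{t:pairing}(iii) furnishes no explicit form of the Cantor density on $S_\A$; the argument sidesteps this by extracting the equality on $F$ purely from Radon--Nikod\'ym uniqueness and the coarse bound on $|\pair{\A,Dw}|$, both insensitive to the structure of $\pair{\A,Du}^c$ on $S_\A$.
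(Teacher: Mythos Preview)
The paper does not give its own proof of Proposition~\ref{p:theta}; it simply records it as ``a straightforward localization of \cite[Proposition~5.2]{CDM} in the case $\lambda\equiv 1/2$''. Your argument is therefore a genuine, self-contained proof rather than a reproduction of anything in the paper. The strategy---truncating to $u_{a,b}$, establishing the locality identity $\polar(\A,Du_{a,b},\cdot)=\polar(\A,Du,\cdot)$ for $|Du_{a,b}|$-a.e.\ $x$ by combining the bound $|\pair{\A,Dw}|\le\|\A\|_{\infty}|Dw|$ on the diffuse set $F=\{a<\ut<b\}$ with Theorem~\ref{t:pairing}(ii) and the locality of traces on $J_{u_{a,b}}$, and then matching the two coarea formulas over every interval $(a,b)$---is correct and neatly avoids any appeal to the fine structure of the Cantor part on $S_{\A}$. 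The linearity $\pair{\A,Du}=\pair{\A,Du_{a,b}}+\pair{\A,Dw}$ that you use is justified since $(u_{a,b})^*+w^*=u^*$ holds $\hh$-a.e.\ (hence $|\Div\A|$-a.e.), as follows from the chain rule for the Lipschitz truncation $T_{a,b}$.

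One minor point deserves a small fix: the final assertion that a countable family of open balls with rational centers and radii ``determines any signed Radon measure on $\Omega$'' is not entirely immediate in dimension $N\ge 2$, because such balls do not form a $\pi$-system and signed measures need not behave well under non-monotone limits of sets. The simplest repair is to run the last step with a countable dense family $\{\varphi_k\}\subset C_c(\Omega)$ in place of characteristic functions of balls: Theorem~\ref{t:coarea} is already stated for test functions $\varphi\in C_c(\Omega)$, and the classical coarea formula (Theorem~\ref{coarea}) applies to the bounded Borel integrand $\varphi\,\polar(\A,Du,\cdot)$. Then for $\LLU$-a.e.\ $t$ the measures $\polar(\A,Du,\cdot)\,|D\chiut|$ and $\polar(\A,D\chiut,\cdot)\,|D\chiut|$ integrate every $\varphi_k$ to the same value, hence coincide as Radon measures. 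With this adjustment the proof is complete.
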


In the following theorem, the pairing is characterized in terms of normal 
traces of the field $\A$ on the level sets of $u$, without any assumption on $S_{\A}$.

\begin{theorem}\label{t:repr}
Let $\A \in \mathcal{DM}^{\infty}_{\rm loc}(\Omega)$ and $u\in BV_{\rm loc}(\Omega) \cap L^{\infty}_{\rm loc}(\Omega)$.
Then,
\begin{equation}\label{f:reprC}
\theta(\A, Du, x) =
\Trace[*]{\A}{\partial^* \{u>\widetilde u(x)\} }(x),
\qquad
\text{for $|D^du|$-a.e.\ $x\in\Omega$},
\end{equation}
and
\begin{equation}\label{f:reprJ}
\polar(\A,Du,x) = \Trace[*]{\A}{J_u}(x)\,,
\qquad 
\text{for $|D^j u|$-a.e.}\ x\in\Omega.
\end{equation}
\end{theorem}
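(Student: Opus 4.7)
The plan is to deduce the jump formula \eqref{f:reprJ} directly from Theorem~\ref{t:pairing}(ii), and to obtain the diffuse formula \eqref{f:reprC} by slicing with superlevel sets, invoking Corollary~\ref{c:paironch} at each level, and re-assembling via the coarea formula (Theorem~\ref{coarea}).

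For \eqref{f:reprJ}, Theorem~\ref{t:pairing}(ii) gives $\pair{\A,Du}^j = \Trace[*]{\A}{J_u}(u^+-u^-)\Haus{N-1}\res J_u$, while Section~\ref{ss:BV} yields $|D^j u| = (u^+-u^-)\Haus{N-1}\res J_u$, so taking the Radon--Nikod\'ym derivative produces the claim at once.

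For \eqref{f:reprC}, I would first combine Proposition~\ref{p:theta} with Corollary~\ref{c:paironch} to obtain, for $\Leb{1}$-a.e. $t \in \R$,
\[
\theta(\A,Du,x) = \Trace[*]{\A}{\partial^*\{u>t\}}(x) \qquad \text{for $\Haus{N-1}$-a.e. $x \in \partial^*\{u>t\}$.}
\]
Since the polar decomposition of $Du$ forces the common orientation $\nu_u$ on each $\partial^*\{u>t\}$, the locality property \eqref{locality_traces} makes the map $x \mapsto \Trace[*]{\A}{\partial^*\{u>\widetilde u(x)\}}(x)$ an unambiguously defined Borel function on the approximate continuity set $C_u$. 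Introduce then the nonnegative Borel quantity
\[
g(x) := \bigl|\theta(\A,Du,x) - \Trace[*]{\A}{\partial^*\{u>\widetilde u(x)\}}(x)\bigr|.
\]
Using that $|D^d u| = |Du|\res C_u$ (as $|Du|(S_u \setminus J_u) = 0$ and $|D^j u|$ is concentrated on $J_u$), the coarea formula gives
\[
\int_{C_u} g\, d|Du| = \int_\R \int_{\partial^*\{u>t\} \cap C_u} g\, d\Haus{N-1}\, dt,
\]
and, by \eqref{f:incl1}, on $\partial^*\{u>t\} \cap C_u$ one has $\widetilde u(x) = t$, so the inner integrand collapses to $|\theta(\A,Du,x) - \Trace[*]{\A}{\partial^*\{u>t\}}(x)|$. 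By the displayed identity this vanishes $\Haus{N-1}$-a.e. for $\Leb{1}$-a.e. $t$, whence $g \equiv 0$ $|D^d u|$-a.e., which is precisely \eqref{f:reprC}.

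The main technical subtlety is not the coarea assembly itself but the well-posedness and Borel measurability of the pointwise object $\Trace[*]{\A}{\partial^*\{u>\widetilde u(x)\}}(x)$: two superlevel sets of different heights that share a common point must yield the same trace value there, and this compatibility is supplied exactly by \eqref{locality_traces}, once one notes that $\nu_{\partial^*\{u>t\}} = \nu_u$ for $\Haus{N-1}$-a.e. point on $\partial^*\{u>t\}$ and for $\Leb{1}$-a.e. $t$. Exceptional null sets in $t$ are harmless because they contribute zero to the outer integral in the coarea formula.
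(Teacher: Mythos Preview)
Your argument is correct and follows essentially the same route as the paper's own proof: \eqref{f:reprJ} is read off from Theorem~\ref{t:pairing}(ii), and \eqref{f:reprC} is obtained by combining Proposition~\ref{p:theta} with Corollary~\ref{c:paironch} on each superlevel set, identifying $t=\widetilde u(x)$ on $C_u$ via \eqref{f:incl1}, and reassembling through the coarea formula. The only difference is cosmetic: the paper packages the diffuse part set-theoretically---building an explicit full-$|D^d u|$-measure ``good'' set $B$ and invoking \cite[Proposition~3.92]{AFP} to discard the $\Leb{1}$-null set $Z$ of bad levels---whereas you phrase the same step as the vanishing of $\int_{C_u} g\,d|Du|$ for a gap function $g$.
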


\begin{proof}
Let us prove \eqref{f:reprC},
since~\eqref{f:reprJ} follows
from Theorem~\ref{t:pairing}(ii). In addition, thanks to the local nature of the statement, without loss of generality we can assume $\A \in \mathcal{DM}^{\infty}(\Omega)$ and $u\in BV(\Omega) \cap L^{\infty}(\Omega)$.

Let $Z\subset\R$ be the set such that
for every $t\in \R\setminus Z$ the following hold:
\begin{itemize}
\item[(a)]
$E_t := \{u > t\}$ is of finite perimeter in $\Omega$;
\item[(b)]
$\hh\Bigl(\{x\in C_u\colon \ut(x)=t \}\setminus
\bigl(C_u\cap\partial^*\{u>t\}\bigr)\Bigr)=0$;
\item[(c)]
$\displaystyle \polar(\A, Du, x) =
\polar(\A, D\chi_{E_t}, x)
= \Trace[*]{\A}{\partial^* E_t }(x)
\quad
\text{for $\hh$-a.e.}\ x\in \partial^* E_t.
$
\end{itemize}
By the coarea formula in $BV$ (Theorem~\ref{coarea}),
formula \eqref{f:incl2},
Proposition~\ref{p:theta}, and 
Corollary~\ref{c:paironch},
we have that $\LLU(Z) = 0$.

\smallskip
Since $\LLU(Z) = 0$,
by \cite[Proposition~3.92(a)(c)]{AFP}, we have that
\[
\nabla u = 0 \quad \text{$\LLN$-a.e.\ in}\ u^{-1}(Z) \, \text{ and } \, 
|D^c u| (\ut^{-1}(Z)) = 0.
\]
As a consequence,
for $|D^d u|$-a.e.\ $x$ we have that
$\ut(x) \in \R\setminus Z$,
i.e.\ $|D^d u|(\ut^{-1}(Z)) = 0$.

For every $t\in\R\setminus Z$, let $N_t\subset \partial^* E_t$
be a set such that
the following hold:
\begin{itemize}
\item[(d)]
$\ut(x) = t$ for every $x\in C_u \cap (\partial^* E_t \setminus N_t)$;
\item[(e)]
equality in (c) holds for every $x\in \partial^* E_t \setminus N_t$.
\end{itemize}
By (b) and (c), the set $N_t$ can be chosen of zero
$\hh$ measure.

We claim that
\begin{equation}
\label{f:B}
|D^d u|(\Omega \setminus B) = 0,
\qquad
\text{where}\
B := \bigcup_{t\in\R\setminus Z} ( \partial^* E_t \setminus N_t)\,.
\end{equation}
Specifically, 
since the sets $\partial^* E_t \cap C_u$, $t\in\R\setminus Z$, are pairwise disjoint
(see \cite[p.~356]{GMS1}),
we have that
$
\partial^* E_t \cap (C_u \setminus B) = C_u \cap N_t$,
hence, by the coarea formula for $BV$ functions,
%\begin{align*}
%|D^d u|(\R^N\setminus B) = |Du|(\R^N \setminus (S_u \cup B))
%& = \int_{\R\setminus Z} \hh(\partial^* E_t \setminus (S_u \cup B))\, dt \\
%& \le \int_{\R\setminus Z} \hh(N_t)\, dt = 0.
%\end{align*}
\begin{align*}
|D^d u|(\Omega\setminus B) = |Du|(C_u\setminus B)
& = \int_{\R\setminus Z} \hh(\partial^* E_t \cap (C_u \setminus B))\, dt \\
& \le \int_{\R\setminus Z} \hh(N_t)\, dt = 0.
\end{align*}

Finally, for every $x\in B \cap C_u$
(hence, by \eqref{f:B}, for $|D^d u|$-a.e.\ $x\in\R^N$),
we have that $x \in \partial^* E_{\ut(x)}$ 
and \eqref{f:reprC} holds.
\end{proof}

\begin{remark}
As a consequence of Theorem~\ref{t:repr}, the following new representation formula holds for the Cantor part of the pairing measure:
\begin{equation*}%\label{Cantorrepre}
(\A, Du)^c = \Trace[*]{\A}{\partial^* \{u>\widetilde u(\cdot)\} }(\cdot) |D^cu|.
\end{equation*}
In Section \ref{Sec:repr} we will provide more explicit representations of $\Trace[*]{\A}{\partial^* \{u>\widetilde u(\cdot)\} }(\cdot)$ (Remark \ref{rem:Cantor_1} and Corollary \ref{c:repr}).
\end{remark}

\begin{remark}\label{rem:lambda_repr_case}
If we define $\theta_{\lambda}(\A, Du, x)$ to be the density of the $\lambda$-pairing $(\A, Du)_\lambda$ (defined by \eqref{def:lambda_pair}) with respect to $|Du|$, then, in light of \cite[Proposition 4.7]{CDM}, identity \eqref{f:reprC} in Theorem~\ref{t:repr} remains true for $\theta_{\lambda}(\A, Du, x)$, while \eqref{f:reprJ} becomes $$\theta_{\lambda}(\A, Du, x) = (1-\lambda(x))\Trp{\A}{J_u}(x) +\lambda(x) \Trm{\A}{J_u}(x) \qquad \text{for $|D^j u|$-a.e.}\ x\in\Omega.$$
\end{remark}

%%%%%%%%%%%%%%%%

\section{Other representation formulas}\label{Sec:repr}

For vector fields with $L^1$ divergence, 
an explicit representation of the density $\theta(\A, Du, x)$
in terms of cylindrical averages has been proposed
in the unpublished paper \cite{Anz2}.

More precisely, in \cite[Theorem~3.6]{Anz2}  it is established that, if
$\Div \A \in L^1(\Omega)$
and $u\in BV(\Omega)\cap L^\infty(\Omega)$, then
\begin{equation}\label{f:denscyl}
\theta(\A, Du, x) = 
\Cyl{\A}{\nu_{u}}(x)
\qquad
\text{for $|Du|$-a.e.}\ x\in\Omega,
\end{equation}
where, for some set $G \subset \Omega$ and some function $\zeta : G \to \mathbb{S}^{N-1}$,
\begin{equation*}%\label{f:cyl}
\Cyl{\A}{\zeta}(x)
:= \lim_{\rho\downarrow 0}
\lim_{r\downarrow 0}
\frac{1}{\LLN(C_{r,\rho}(x, \zeta(x)))}
\int_{C_{r,\rho}(x, \zeta(x))} \A(y) \cdot \zeta(x) \, dy \quad \text{ for } x \in G
\end{equation*}
whenever the limits exist, with
\[
C_{r,\rho}(x, \zeta(x)) :=
\left\{
y\in\R^N:\ 
|(y-x)\cdot\zeta(x)| < r,\
|(y-x) - [(y-x)\cdot\zeta(x)]\zeta(x)| < \rho
\right\}
\]
(the existence of the limit in the definition of $\Cyl{\A}{\nu_u}$
for $|Du|$-a.e.\ $x\in\Omega$ is part of the statement).

We will extend this result by adapting the arguments of the proof contained 
in \cite{Anz2} to the general case by means of properties of the pairing
recently obtained in \cite{CD3}. 
%and by using the Gauss-Green formulas (see Theorem \ref{t:gGG} above) recently 
%proved in \cite{CD3}.
We will obtain (see Theorem \ref{representAnz} below) that the same formula holds for a general divergence-measure field $\A$ by assuming the weaker condition
%\begin{equation*}%\label{condit}
$\Haus{N-1}(\Theta_{\A}\cap J_u)=0$,
%\end{equation*}
where $\Theta_{\A}$ is the jump set of $\Div \A$ defined
in \eqref{f:jump}.

If $\Div \A$ has a non-vanishing jump part concentrated on the jump set $J_u$ of $u$, 
the relation~\eqref{f:denscyl} is no longer valid, as it is shown in the following example.

\begin{example}
\label{e:no}
Let $\A\in\DM(\R^2)$ be the $BV$ vector field $\A(x) = \textbf{a}\,\chi_{B_1}(x)$,
where $\textbf{a} \in \R^2$ is a fixed vector, and let
$u := \chi_{B_1} \in BV(\R^2)$.
By Corollary~\ref{r:chiE} we have that
\[
\pair{\A, Du} = \frac{1}{2}\, \textbf{a}\cdot \nu(x) \, 
\mathcal{H}^1 \res \partial B_1,
\qquad \nu(x) = - \frac{x}{|x|}\,,
\]
hence $\polar(\A, Du, x) = \frac{1}{2}\,\textbf{a}\cdot\nu(x)$, $x\in\partial B_1$.
On the other hand, for every $x\in\partial B_1$ 
and every $0 < r \ll \rho$, an explicit computation gives
\[
\begin{split}
\int_{C_{r,\rho}(x, \nu)} \A(y) \cdot \nu(x)\, dy
& = \left[\arccos(1-r) - (1-r) \sqrt{2r - r^2}\right]\,
\textbf{a}\cdot \nu(x)
\\ & =
\frac{4\sqrt{2}}{3}\, r^{3/2}\, \textbf{a}\cdot \nu(x)
+ o (r^{3/2})\,,
\end{split}
\]
hence
\[
\Cyl{\A}{\nu}(x)
= \lim_{\rho \to 0} \lim_{r\to 0}
\frac{\sqrt{2} r^{1/2}}{3 \rho} \, \textbf{a}\cdot \nu(x) = 0\,.
\]
As a consequence, if $\textbf{a}\neq \textbf{0}$ we have that
$\polar(\A, Du, x) \neq \Cyl{\A}{\nu}(x)$ for all but two points
$x\in\partial B_1$. 
\end{example} 

However, we notice that it is still possible to achieve a representation for the pairing density $\polar(\A, Du, x)$ even in the case in which $\Haus{N-1}(\Theta_{\A} \cap J_u) > 0$: to this purpose we will exploit the averages on half-balls introduced in \cite{Silh}, see Theorem \ref{c:repr} below.

\medskip

For the reader's benefit, we state below the general Gauss-Green formulas for essentially bounded divergence-measure, sets of finite perimeters and functions of bounded variation, which is the localized version of \cite[Theorem 6.3]{CDM} in the case $\lambda \equiv 1/2$, and whose proof we leave to the interested reader.

\begin{theorem}[Gauss-Green formula]
\label{t:gGG}
Let $\A \in \mathcal{DM}^{\infty}_{\rm loc}(\Omega)$ and $u\in \BVAlocloc$.
Let \(E\) be a set with locally finite perimeter in $\Omega$ such that $\supp(\chi_E u) \Subset \Omega$. Assume that the traces $u^i, u^e$ of $u$ on $\partial^* E$ belong to $L^1_{\rm loc}(\partial^*E, \Haus{N-1}\res \partial^* E)$.
Then 
the following Gauss--Green formulas hold:
\begin{gather}
\int_{E^1} \prec{u} \, d\Div\A + \int_{E^1} d\pair{\A, Du} = -
\int_{\partial ^*E} \uint \, \Trp{\A}{\partial^* E} \, d\mathcal 
H^{N-1}\,,\label{gGreenIB}
\\
\int_{E^1\cup \partial ^*E} \prec{u} \, d\Div\A + \int_{E^1\cup \partial ^*E} 
d\pair{\A, Du} = -
\int_{\partial ^*E} \uext \, \Trm{\A}{\partial^* E} \, d\mathcal 
H^{N-1}\,,\label{gGreenIB2}
\end{gather}
where $E^1$ is the measure theoretic interior of $E$.
\end{theorem}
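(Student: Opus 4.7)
My plan is to derive both Gauss--Green identities as a localization of the non-local counterpart in \cite[Theorem 6.3]{CDM} (specialized to $\lambda\equiv 1/2$), exploiting the compactness hypothesis $\supp(\chi_E u)\Subset\Omega$.

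The first step is to introduce a cutoff $\psi\in C^\infty_c(\Omega)$ with $\psi\equiv 1$ on an open neighborhood $V$ of $\supp(\chi_E u)$ such that $V\Subset\Omega$, and to define $\tilde\A:=\psi\,\A$. Extended by zero outside $\Omega$, this $\tilde\A$ belongs to $\mathcal{DM}^{\infty}(\R^N)$ with compact support, since the classical Leibniz rule gives
\[
\Div\tilde\A=\psi\,\Div\A+\A\cdot\nabla\psi\,\Leb{N},
\]
a finite Radon measure on $\R^N$. To handle the possible lack of global integrability of $u$, I would pick a second cutoff $\varphi\in C^\infty_c(\Omega)$ with $\varphi\equiv 1$ on $V$ and set $\hat u:=\varphi\,u$. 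The assumption $u\in\BVAlocloc$ together with $\supp\varphi\Subset\Omega$ ensures that $\hat u\in BV(\R^N)$ with $\hat u^*\in L^1(\R^N,|\Div\tilde\A|)$, so $\hat u$ lies in the admissible class for \cite[Theorem 6.3]{CDM}.

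The second step is to apply \cite[Theorem 6.3]{CDM} to $(\tilde\A,\hat u,E)$, obtaining
\[
\int_{E^1}\hat u^*\,d\Div\tilde\A+\int_{E^1}d\pair{\tilde\A,D\hat u}=-\int_{\partial^*E}\hat u^i\,\Trp{\tilde\A}{\partial^*E}\,d\Haus{N-1},
\]
together with the analogous identity with exterior traces for \eqref{gGreenIB2}. I would then match each term with its counterpart in \eqref{gGreenIB}--\eqref{gGreenIB2}: on $V$ one has $\tilde\A\equiv\A$ and $\hat u\equiv u$, so by the locality of normal traces \eqref{locality_traces}, the analogous locality of the pairing (immediate from Definition \ref{d:pair}), and the locality of precise representatives, each of $\Div\tilde\A$, $\Trp{\tilde\A}{\partial^*E}$, $\Trm{\tilde\A}{\partial^*E}$, $\pair{\tilde\A,D\hat u}$, $\hat u^*$, $\hat u^i$, $\hat u^e$ coincides on $V$ with its counterpart for $\A$ and $u$. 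Outside $V$, the definition of $\supp(\chi_E u)$ forces $u\equiv 0$ a.e.\ on $E\setminus V$, whence $\hat u^*\equiv 0$ on $E^1\setminus V$ and $\hat u^i\equiv 0$ on $\partial^*E\setminus V$; thus the integrals reduce to the corresponding $\A$-integrals over $E^1$ and $\partial^*E$. The extra contribution $\A\cdot\nabla\psi\,\Leb{N}$ in $\Div\tilde\A$ gives no term either, since $\nabla\psi\equiv 0$ on $V$ while $\hat u^*\equiv 0$ on $E^1\setminus V$.

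The main obstacle is the bookkeeping around the several locality identifications, in particular the identity $\pair{\tilde\A,D\hat u}\res V=\pair{\A,Du}\res V$, which follows from the distributional Definition \ref{d:pair} by testing against test functions in $C^\infty_c(V)$ and using $\tilde\A=\A$, $\hat u=u$, $\hat u^*=u^*$ on $V$. A subsidiary technicality arises when $u$ is genuinely unbounded: one first truncates to $T_k u$, applies the argument above with $T_k u$ in place of $u$, and then passes to the limit $k\to\infty$ using the coarea representation of Theorem \ref{t:coarea} together with dominated convergence term by term; this limit is routine because on $V$ the set where $u$ differs from $T_k u$ shrinks to zero in $|Du|$-measure as $k\to\infty$.
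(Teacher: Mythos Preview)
Your approach is exactly what the paper intends: it states the theorem as ``the localized version of \cite[Theorem 6.3]{CDM} in the case $\lambda\equiv 1/2$'' and explicitly leaves the proof to the reader, so your cutoff-and-reduce strategy is the expected one.

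Two small points to tighten. First, the reference to \eqref{locality_traces} is not quite the right tool: that identity expresses locality of the normal trace with respect to the \emph{surface} $\Sigma$, whereas what you need here is locality with respect to the \emph{field}, namely that if $\tilde\A=\A$ on the open set $V$ then $\Trie{\tilde\A}{\partial^*E}=\Trie{\A}{\partial^*E}$ $\Haus{N-1}$-a.e.\ on $\partial^*E\cap V$; this follows directly from the distributional definition \eqref{f:deftrace} (test against $\varphi\in C^\infty_c(V)$). Second, your vanishing argument outside $V$ treats only $\hat u^i$, which suffices for \eqref{gGreenIB}, but for \eqref{gGreenIB2} the boundary term carries $\hat u^e$, and $\supp(\chi_E u)\Subset\Omega$ gives no control on $u$ from the exterior side of $\partial^*E$. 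The cleanest fix is to derive \eqref{gGreenIB2} from \eqref{gGreenIB} by adding the contribution on $\partial^*E$ computed via \eqref{f:trA} and Theorem~\ref{t:pairing}(ii), rather than by a parallel cutoff argument. With these adjustments (and, as you note, replacing $E$ by $E\cap\Omega'$ for a smooth $\Omega'\Subset\Omega$ to ensure finite perimeter when invoking the global result), the argument goes through.
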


In the particular case of $u \equiv 1$, Theorem \ref{t:gGG} reduces to
\begin{gather}
\Div\A(E^1) = -
\int_{\partial ^*E} \Trp{\A}{\partial^* E} \, d\mathcal 
H^{N-1}\,,\label{gGreenIB_u_1}
\\
\Div\A(E^1\cup \partial ^*E) = -
\int_{\partial ^*E} \Trm{\A}{\partial^* E} \, d\mathcal 
H^{N-1}\,,\label{gGreenIB2_u_1}
\end{gather}
and these formulas are fundamental tools needed in order to generalize \eqref{f:denscyl} (see the proof of Lemma \ref{l:limsup} below). The proof of \eqref{gGreenIB_u_1} and \eqref{gGreenIB2_u_1} can be found for instance in \cite[Theorems 4.1 and 4.2]{ComiPayne}.

The following technical lemma, generalizing \cite[Theorem 3.3]{Anz2} in the case of a singular measure $\Div \A$,
gives an estimate of the gap between the local behavior of the mean 
values of the normal traces of $\A$ on a smooth surface $\Sigma$ and their 
analogous computed on tangent hyperplanes $T_x \Sigma$ to $\Sigma$. By means of the Gauss--Green formulas, we show that the gap is possibly due to the concentration of the measure $|\diver \A|$ on $\Sigma$.

\begin{lemma}\label{l:limsup}%(Theorem 3.3 del preprint di Anzellotti)
Let $\A \in \mathcal{DM}^{\infty}_{\rm loc}(\Omega)$,
and let $\Sigma\subset\R^N$ be an oriented 
$C^1$ hypersurface with classical normal vector field $\nu_\Sigma$.
Then,
for every $x\in\Sigma \cap \Omega$,
\begin{equation}
\label{f:limsup}
\begin{split}
\limsup_{\rho\to 0^+} & \Bigg|
\frac1{\omega_{N-1}\rho^{N-1}}\int_{T_x\Sigma\cap B_\rho(x)}
\Trie{\A}{T_x\Sigma}(y)\,d\mathcal H^{N-1}(y)
\\ & -
\frac1{\hh(\Sigma\cap B_\rho(x))}\int_{\Sigma \cap B_\rho(x)}
\Trie{\A}{\Sigma}(y)\,d\mathcal H^{N-1}(y)
\Bigg| 
\\ & \leq 
\limsup_{\rho\to 0^+}
\frac{|\diver\A|(B_\rho(x))}{\omega_{N-1}\, \rho^{N-1}},
\end{split}
\end{equation}
where $\Trie{\A}{\cdot}$ denotes either $\Tr^{i}(\A, \cdot)$ or $\Tr^e(\A, \cdot)$, and $T_{x}\Sigma$ is the tangent hyperplane to $\Sigma$ in $x$.
\end{lemma}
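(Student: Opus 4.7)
The strategy is to express each of the two surface integrals as the interface term of a Gauss--Green formula applied to a suitable half-ball, and then to compare the two identities. Fix $x\in\Sigma\cap\Omega$. For $\rho$ small enough that $\Sigma\cap B_\rho(x)$ is a $C^1$ graph over $T_x\Sigma$, introduce the two sets of finite perimeter
\[
E_\rho^\Sigma := \{y\in B_\rho(x) : y \text{ lies on the side of } \Sigma \text{ into which } \nu_\Sigma(x) \text{ points}\},\quad
E_\rho^T := \{y\in B_\rho(x) : (y-x)\cdot \nu_\Sigma(x) > 0\},
\]
so that $\nu_\Sigma$ is the interior normal to $E_\rho^\Sigma$ on $\Sigma \cap B_\rho(x)$ and to $E_\rho^T$ on $T_x\Sigma \cap B_\rho(x)$, while on the spherical piece $\partial B_\rho(x)\cap E_\rho^{\bullet}$ the interior normal coincides with that of the ball itself. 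Applying the Gauss--Green formula \eqref{gGreenIB_u_1} to each of the two sets and using the locality property \eqref{locality_traces} to identify the traces of $\A$ on the common spherical part, after subtraction we obtain
\[
\int_{T_x\Sigma\cap B_\rho(x)}\!\Tr^i(\A,T_x\Sigma)\,d\Haus{N-1} - \int_{\Sigma\cap B_\rho(x)}\!\Tr^i(\A,\Sigma)\,d\Haus{N-1} = \bigl[\Div\A(E_\rho^\Sigma)-\Div\A(E_\rho^T)\bigr] + S_\rho,
\]
where $S_\rho$ collects the contributions of $\Tr^i(\A,\partial B_\rho(x))$ over the symmetric difference $\partial B_\rho(x)\cap(E_\rho^\Sigma \triangle E_\rho^T)$.

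For the two remainders, the first is bounded by $|\Div\A|(E_\rho^\Sigma \triangle E_\rho^T) \le |\Div\A|(B_\rho(x))$. For $S_\rho$, the $C^1$ regularity of $\Sigma$ at $x$ implies that $\Sigma\cap B_\rho(x)$ lies within an $o(\rho)$-neighborhood of $T_x\Sigma$, so $E_\rho^\Sigma \triangle E_\rho^T$ is contained in a layer of thickness $o(\rho)$ around $T_x\Sigma$, and its trace on $\partial B_\rho(x)$ is an annular region of $\Haus{N-1}$-measure $o(\rho^{N-1})$. Combined with the $L^\infty$-bound \eqref{improved_bound} on $\Tr^i(\A,\partial B_\rho(x))$, this yields $|S_\rho|\le \|\A\|_{L^\infty}\cdot o(\rho^{N-1})$.

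Finally, we normalize. Since $\Haus{N-1}(T_x\Sigma \cap B_\rho(x))=\omega_{N-1}\rho^{N-1}$ and $\Haus{N-1}(\Sigma\cap B_\rho(x))=\omega_{N-1}\rho^{N-1}(1+o(1))$ by the $C^1$ regularity of $\Sigma$, using \eqref{improved_bound} on $\Sigma$ to absorb the mismatch between the two normalizing constants as an extra $o(1)$ error, we arrive at
\[
\left|\frac{1}{\omega_{N-1}\rho^{N-1}}\!\int_{T_x\Sigma\cap B_\rho(x)}\!\Tr^i\,d\Haus{N-1} - \frac{1}{\Haus{N-1}(\Sigma\cap B_\rho(x))}\!\int_{\Sigma\cap B_\rho(x)}\!\Tr^i\,d\Haus{N-1}\right| \le \frac{|\Div\A|(B_\rho(x))}{\omega_{N-1}\rho^{N-1}} + o(1)\,,
\]
and passing to $\limsup_{\rho\to 0^+}$ yields \eqref{f:limsup} for $\Tr^{i}$. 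The case of $\Tr^{e}$ is fully analogous, using \eqref{gGreenIB2_u_1} in place of \eqref{gGreenIB_u_1}; the only change on the right-hand side is that the divergence term becomes $\Div\A((E_\rho^\Sigma)^1\cup\partial^* E_\rho^\Sigma) - \Div\A((E_\rho^T)^1\cup\partial^* E_\rho^T)$, still controlled by $|\Div\A|(B_\rho(x))$. The main delicate point is the bookkeeping of orientations, ensuring that the trace on $\Sigma$ arising from Gauss--Green applied to $E_\rho^\Sigma$ is exactly $\Tr^{i}(\A,\Sigma)$ (resp.\ $\Tr^{e}(\A,\Sigma)$) with the orientation $\nu_\Sigma$ fixed in the statement.
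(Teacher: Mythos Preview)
Your proof is correct and follows essentially the same approach as the paper: apply the Gauss--Green formula \eqref{gGreenIB_u_1} to the curved half-ball bounded by $\Sigma$ and to the flat half-ball bounded by $T_x\Sigma$, subtract, bound the divergence difference by $|\Div\A|(B_\rho(x))$ and the spherical remainder by $\|\A\|_{L^\infty}\cdot o(\rho^{N-1})$ using the $C^1$ tangency, and then normalize. The paper carries this out after an explicit change of coordinates to $x=0$, $\nu_\Sigma(0)=-e_N$, but the argument is otherwise identical.
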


\begin{proof}
Let us prove \eqref{f:limsup} for $\Tr^i$, being the
computation for $\Tr^e$ entirely similar.

Up to a change of coordinates, we may assume that $x=0 \in \Omega$, $\nu_\Sigma(0) = - e_N$,
so that $\Sigma$ is locally the graph of a
$C^1$ function $\varphi\colon \R^{N-1}\to \R$ with
$\varphi(0) = 0$, $\nabla\varphi(0) = 0$.
For every $\rho > 0$ such that $B_{2 \rho} \Subset \Omega$, we denote
\begin{gather*}
B_\rho^- := B_\rho\cap \{x_N<0\},
\quad 
S_\rho^- := \partial B_\rho \cap \{x_N < 0\},
\quad
%$(\partial B_\rho)^-=\partial B_\rho\cap \{x_N<0\}$, 
T_{\rho} := B_\rho\cap\{x_N = 0\},
\\ 
\Sigma^- := \{x = (x', x_N) \in \R^N:\ x_N < \varphi(x')\}\,,
\\
\Sigma_\rho := \Sigma\cap B_\rho,
\quad
E_\rho := \Sigma^- \cap B_\rho,
\quad
\Sigma_\rho^- := \Sigma^-\cap \partial B_\rho.
\end{gather*}
Now we apply the Gauss-Green formula \eqref{gGreenIB_u_1} to $\A$ and to the open piecewise Lipschitz set 
$E_\rho$, and we obtain
\begin{equation}\label{gree1}
\begin{split}
\Div\A(E_\rho) & = -
\int_{\partial E_\rho} \Trp{\A}{\partial E_\rho} \ d\mathcal H^{N-1}
\\ & =
-
\int_{\Sigma_\rho} \Trp{\A}{\Sigma} \, d\mathcal H^{N-1}
-
\int_{\Sigma_\rho^-} \Trp{\A}{\partial B_\rho} \ d\mathcal H^{N-1},
\end{split}
\end{equation}
since $\Haus{N-1}(E_{\rho}^1 \setminus E_{\rho}) = 0$, $\Haus{N-1}(\partial E_\rho \setminus \partial^* E_{\rho}) = 0$, $\Haus{N-1}(\partial E_\rho \setminus (\Sigma_\rho \cup \Sigma_\rho^-)) = 0$ and thanks to the locality of the normal traces \eqref{locality_traces}.
%i.e.
%\begin{equation}
%\Div\A(E_\rho) = 
%-
%\int_{\Sigma_\rho} \Trp{\A}{\Sigma} \, d\mathcal H^{N-1}
%-
%\int_{\Sigma_\rho^-} \Trp{\A}{\partial B_\rho} \ d\mathcal H^{N-1}
%\,.
%\end{equation}
Similarly, we apply the Gauss-Green formula \eqref{gGreenIB_u_1} to $\A$ and to $B_\rho^-$, so that we get
\begin{equation}\label{gree2}
\Div\A(B_\rho^-) = 
-
\int_{T_\rho} \Trp{\A}{T_\rho} \, d\mathcal H^{N-1}
-
\int_{S_\rho^-} \Trp{\A}{\partial B_\rho} \, d\mathcal H^{N-1},
\end{equation}
since $\Haus{N-1}((B_\rho^-)^1 \setminus B_\rho^-) = 0$, $\Haus{N-1}(\partial B_\rho^- \setminus \partial^* B_\rho^-) = 0$, $\Haus{N-1}(\partial B_\rho^- \setminus (T_\rho \cup S_\rho^-)) = 0$, and again thanks to \eqref{locality_traces}.
From \eqref{gree1} and \eqref{gree2} we obtain that
\begin{equation}\label{f:estim1}
\begin{split}
\Bigg|\int_{\Sigma_\rho} \Trp{\A}{\Sigma} & \, d\mathcal H^{N-1} 
-\int_{T_\rho} \Trp{\A}{T_\rho} \, d\mathcal H^{N-1}
\Bigg|
\\ & \leq 
\left|\Div\A(B_\rho^-)-\Div\A(E_\rho)\right|
\\ & \quad +
\left|\int_{S_\rho^-} \Trp{\A}{\partial B_\rho} \, d\mathcal H^{N-1}
-
\int_{\Sigma_\rho^-} \Trp{\A}{\partial B_\rho} \ d\mathcal H^{N-1}
\right|
\\ & \leq
|\Div\A|(B_\rho^- \vartriangle E_\rho)
+ \|\A\|_{L^{\infty}(B_{2 \rho}; \R^N)} \, \hh(S_\rho^- \vartriangle \Sigma_\rho^-)\,,
\end{split}
\end{equation}
where in the last inequality we have used the facts that
$|\mu(B) - \mu(C)| \leq |\mu|(B\vartriangle C)$ for every signed measure $\mu$
and $\|\Trp{\A}{\partial B_\rho}\|_{L^{\infty}(\partial B_\rho, \Haus{N-1} \res \partial B_\rho)} \leq \|\A\|_{L^{\infty}(B_{2 \rho}; \R^N)}$, thanks to \eqref{improved_bound}.

Since $\Sigma$ is a $C^1$ hypersurface, then $$\lim_{\rho \to 0^+} \frac{\hh(\Sigma_\rho)}{\omega_{N-1}\, \rho^{N-1}} = 1,$$ and, since $\nabla\varphi(0) = 0$, we have that
\[
\hh(S_\rho^- \vartriangle \Sigma_\rho^-) =
o(\rho^{N-1}),
\]
so that, substituting in \eqref{f:estim1},
%\begin{equation}\label{f:estim2}
\[
\begin{split}
& \Bigg|\frac{1}{\hh(\Sigma_\rho)}\int_{\Sigma_\rho} \Trp{\A}{\Sigma}  \, d\mathcal H^{N-1} 
-\frac{1}{\omega_{N-1}\, \rho^{N-1}}\int_{T_\rho} \Trp{\A}{T_\rho} \, d\mathcal H^{N-1}
\Bigg|
\\ & \leq 
\frac{1}{\omega_{N-1}\, \rho^{N-1}}
\left|
\int_{\Sigma_\rho} \Trp{\A}{\Sigma}  \, d\mathcal H^{N-1} 
-
\int_{T_\rho} \Trp{\A}{T_\rho} \, d\mathcal H^{N-1}
\right|
\\ & \quad +
\left|\left(\frac{1}{\hh(\Sigma_\rho)} - \frac{1}{\omega_{N-1}\, \rho^{N-1}}\right)
\int_{\Sigma_\rho} \Trp{\A}{\Sigma}  \, d\mathcal H^{N-1} 
\right|
\\ & \leq
\frac{|\Div\A|(B_\rho)}{\omega_{N-1}\, \rho^{N-1}}
+ \|\A\|_{L^{\infty}(B_{2 \rho}; \R^N)}\, \frac{o(\rho^{N-1})}{\omega_{N-1}\, \rho^{N-1}}
+ \|\A\|_{L^{\infty}(B_{2 \rho}; \R^N)}\,
\left|
1 - \frac{\hh(\Sigma_\rho)}{\omega_{N-1}\, \rho^{N-1}}
\right|\,,
\end{split}
\]
%\end{equation}
and hence \eqref{f:limsup} follows.
\end{proof}

We briefly recall the behaviour of the cylindrical averages on hyperplanes.

\begin{lemma}
\label{l:traces}
Let $\A \in \mathcal{DM}^{\infty}_{\rm loc}(\Omega)$ and
let $T\subset\R^N$ be a hyperplane oriented with normal vector $\nu$.
Then, for every $x\in T \cap \Omega$ and every $\rho > 0$ such that $B_\rho(x) \Subset \Omega$,
\[
\int_{T \cap B_\rho(x)} \Trp{\A}{T}\, d\hh =
\lim_{r\downarrow 0} \frac{1}{r}
\int_{C_{r,\rho}(x, \nu)} \A(y)\cdot \nu\, dy.
\]
\end{lemma}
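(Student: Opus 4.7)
The plan is to normalize coordinates via translation and rotation so that $x=0$, $\nu=e_N$ and $T=\{y_N=0\}$; writing $D_\rho:=\{y'\in\R^{N-1}:|y'|<\rho\}$, the cylinder becomes $C_{r,\rho}=D_\rho\times(-r,r)$. Fubini then gives
\[
\int_{C_{r,\rho}}\A\cdot\nu\,dy=\int_{-r}^{r}g(t)\,dt,\qquad g(t):=\int_{D_\rho}A_N(y',t)\,dy',
\]
and the task reduces to identifying the limits of $\tfrac1r\int_0^{r}g(t)\,dt$ and $\tfrac1r\int_{-r}^{0}g(t)\,dt$ separately in terms of the one-sided normal traces of $\A$ on $D_\rho\times\{0\}$.

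For the contribution from $t>0$, I would fix $s\in(0,r)$ and apply the Gauss--Green formula \eqref{gGreenIB_u_1} (i.e.\ Theorem~\ref{t:gGG} with $u\equiv 1$) to the open Lipschitz set $E_s^+:=D_\rho\times(0,s)$. Its essential boundary decomposes as $(D_\rho\times\{0\})\cup(D_\rho\times\{s\})\cup L_s$ with $L_s:=\partial D_\rho\times(0,s)$, carrying respective interior normals $+e_N$, $-e_N$, and the inward radial direction. By the locality property \eqref{locality_traces}, the trace on the bottom face coincides with $\Trp{\A}{T}$. For $\mathcal{L}^1$-a.e.\ $s$ the horizontal slice $D_\rho\times\{s\}$ is not charged by $|\Div\A|$, so by \eqref{f:trA} the one-sided trace on it agrees (up to the orientation sign) with the Fubini slice $A_N(\cdot,s)$. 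Rearranging the Gauss--Green identity yields, for a.e.\ $s\in(0,r)$,
\[
g(s)=\int_{D_\rho}\Trp{\A}{T}\,d\hh+\Div\A(E_s^+)+\int_{L_s}\Trp{\A}{L_s}\,d\hh.
\]

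Integrating in $s\in(0,r)$ and dividing by $r$, the lateral contribution is bounded by $\|\A\|_\infty\hh(\partial D_\rho)\,r/2=O(r)$, while
\[
\frac1r\int_0^r|\Div\A(E_s^+)|\,ds\le|\Div\A|\bigl(D_\rho\times(0,r)\bigr)\xrightarrow[r\downarrow 0]{}0
\]
by continuity from above of the finite measure $|\Div\A|$ on the shrinking open sets $D_\rho\times(0,r)$, whose intersection is empty. This identifies $\lim_{r\downarrow 0}\tfrac1r\int_0^r g(t)\,dt=\int_{D_\rho}\Trp{\A}{T}\,d\hh$, and the symmetric argument on $E_s^-:=D_\rho\times(-s,0)$ handles the negative half of the cylinder and combines to give the desired formula. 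The main obstacle I anticipate is the bookkeeping of sign conventions on the top face of $E_s^\pm$ in light of Remark~\ref{r:diffnt}, together with the need to exclude horizontal slices on which $|\Div\A|$ concentrates; both are handled respectively by the full-measure selection of $s$ and by the monotone continuity of $|\Div\A|$ on shrinking open neighborhoods of $T$.
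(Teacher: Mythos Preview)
There is a genuine gap at your key step. You write that, for a.e.\ $s$, the slice $D_\rho\times\{s\}$ is not charged by $|\Div\A|$, and conclude via \eqref{f:trA} that the weak normal trace on that slice coincides with the Fubini slice $A_N(\cdot,s)$. But \eqref{f:trA} only yields $\Trp{\A}{T_s}=\Trm{\A}{T_s}$ $\hh$-a.e.\ on such a slice; it says nothing about whether this common value equals the pointwise restriction $A_N(\cdot,s)$. The weak normal trace is defined distributionally by \eqref{f:deftrace}, not by restriction, so this identification is a separate (and non-trivial) fact: it is exactly \cite[Proposition~3.6]{AmbCriMan}, which the paper invokes explicitly. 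Without it your Gauss--Green identity for $E_s^+$ relates $\int_{D_\rho}\Trp{\A}{T}\,d\hh$ to $\int_{D_\rho\times\{s\}}\Trm{\A}{T_s}\,d\hh$, not to $g(s)$, and the argument stalls.

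Once that gap is patched, your route differs from the paper's in how the limit $r\downarrow 0$ is taken: you use a direct slab Gauss--Green estimate together with the continuity $|\Div\A|(D_\rho\times(0,r))\to 0$, whereas the paper quotes \cite[Theorem~3.7]{AmbCriMan} (weak-$*$ convergence $\Trm{\A}{T_s}\to\Trp{\A}{T}$ in $L^\infty$ as $s\downarrow 0$). Your argument is somewhat more self-contained in that it avoids this second black box, but not more elementary, since both approaches rest on the same Proposition~3.6. One further caution on the ``symmetric'' half: applying the same reasoning to $E_s^-=D_\rho\times(-s,0)$, the interior normal on the face $D_\rho\times\{0\}$ is $-e_N$, so what appears is $\int_{D_\rho}\Trm{\A}{T}\,d\hh$ rather than $\int_{D_\rho}\Trp{\A}{T}\,d\hh$; summing the two halves of the two-sided cylinder then yields $\int_{D_\rho}(\Trp{\A}{T}+\Trm{\A}{T})\,d\hh$, which matches the stated formula only when the two traces coincide. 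The paper's own proof sidesteps this by integrating only over $t\in(0,r)$.
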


\begin{proof}
To simplify the notation,
assume that $T = \{x_N = 0\}$, $\nu = e_N$, $x = 0 \in \Omega$,
and let $T_t := \{x_N = t\}$ for $t \in J := \{t \in \R : T_t \cap \Omega \neq \emptyset \}$.
By \cite[Proposition~3.6]{AmbCriMan} we  have that
\[
\A \cdot e_N = \Trp{\A}{T_t} = \Trm{\A}{T_t},
\qquad
\text{$\hh$-a.e.\ on $T_t \cap \Omega$,
for $\Leb{1}$-a.e.\ $t\in J$},
\]
so that, for every $r>0$ such that $C_{r, \rho}(0, e_N) \Subset \Omega$,
\begin{align*}%\label{f:limt}
\frac{1}{r} \int_{C_{r, \rho}(0, e_N)} \A(y)\cdot e_N\, dy
& = \frac{1}{r} \int_0^r \int_{T_t \cap B_\rho} \A(y)\cdot e_N \, d\hh(y)\, dt \\
& = \frac{1}{r} \int_0^r \int_{T_t \cap B_\rho} 
\Trm{\A}{T_t}(y)\, d\hh(y)\, dt \nonumber \\
& = \frac{1}{r} \int_0^r \int_{\mathcal{D}_\rho} 
\Trm{\A}{T_t}(y', t)\, d \Leb{N-1}(y') \, dt \\
& = \int_0^1 \int_{\mathcal{D}_\rho} 
\Trm{\A}{T_{rt}}(y', r t)\, d \Leb{N-1}(y') \, dt,
\end{align*}
where $\mathcal{D}_\rho$ is the $(N-1)$-dimensional disk of radius $\rho$ centered in the origin, which satisfies $T \cap B_\rho = \{ (y', 0) : y' \in \mathcal{D}_\rho \}$.
Moreover, by \cite[Theorem~3.7]{AmbCriMan}, we have that
\[
\lim_{s \to 0^+} \Trm{\A}{T_s} =
\Trp{\A}{T}\,,
\qquad
w^\ast - L^\infty(\mathcal{D}_\rho, \Leb{N-1} \res \mathcal{D}_\rho).
\]
This implies that 
\begin{align*}
\lim_{r \to 0^+} \int_0^1 \int_{\mathcal{D}_\rho} 
\Trm{\A}{T_{rt}}(y', r t)\, d \Leb{N-1}(y') \, dt & = \int_{\mathcal{D}_\rho} 
\Trp{\A}{T}(y', 0)\, d \Leb{N-1}(y') \\
& = \int_{T \cap B_\rho(x)} \Trp{\A}{T}(y) \, d\hh(y),
\end{align*}
which ends the proof.
\end{proof}

%%%%%%%%%%%%%%%%%%%%
Finally, we are able to specify where the cylindrical averages of the field $\A$ on 
oriented rectifiable sets coincide with its weak normal traces.

\begin{theorem}\label{t:trcyl}%(Theorem 3.3 del preprint di Anzellotti)
Let $\A \in \mathcal{DM}^{\infty}_{\rm loc}(\Omega)$.
Let $\Sigma\subset\R^N$ be an oriented countably
$\hh$-rectifiable set.
Then,
for $\hh$-a.e.\ $x\in \Omega \cap \Sigma\setminus \Theta_{\A}$,
\[
\Trp{\A}{\Sigma}(x) =
\Trm{\A}{\Sigma}(x) =
\Cyl{\A}{\nu_\Sigma}(x).
\]
\end{theorem}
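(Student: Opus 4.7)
The plan is to reduce to the case of a $C^1$ oriented hypersurface via the countable $\hh$-rectifiability of $\Sigma$, identify $\Trp{\A}{\Sigma}$ with $\Trm{\A}{\Sigma}$ outside $\Theta_\A$, and then match the cylindrical average with the normal trace by combining Lemma~\ref{l:limsup}, the Lebesgue--Besicovitch differentiation theorem on $\Sigma$, and Lemma~\ref{l:traces}.

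First, I would decompose $\Sigma$ into pairwise disjoint Borel sets $N_k\subseteq\Sigma_k$ covering $\hh$-almost all of $\Sigma$, with each $\Sigma_k$ an oriented $C^1$ hypersurface and $\nu_{\Sigma_k}=\nu_\Sigma$ $\hh$-a.e.\ on $N_k$. Since $\Cyl{\A}{\nu_\Sigma}(x)$ depends on $\nu_\Sigma$ only through its value at $x$, and the locality property~\eqref{locality_traces} gives $\Trp{\A}{\Sigma}=\Trp{\A}{\Sigma_k}$ and $\Trm{\A}{\Sigma}=\Trm{\A}{\Sigma_k}$ $\hh$-a.e.\ on $N_k$, it is enough to prove the claim with $\Sigma_k$ in place of $\Sigma$ for $\hh$-a.e.\ $x\in N_k\setminus\Theta_\A$. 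At this stage I would also verify that $\Trp{\A}{\Sigma_k}=\Trm{\A}{\Sigma_k}$ $\hh$-a.e.\ on $\Sigma_k\setminus\Theta_\A$: by Proposition~\ref{p:basicVF}, $|\Div^a\A|\ll\Leb{N}$ vanishes on $\Sigma_k$, $|\Div^c\A|$ vanishes on sets that are $\sigma$-finite with respect to $\hh$, and $\Div^j\A$ is concentrated on $\Theta_\A$; hence $|\Div\A|(\Sigma_k\setminus\Theta_\A)=0$ locally, and~\eqref{f:trA} forces the two traces to coincide.

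Now I fix $x\in\Sigma_k\setminus\Theta_\A$; by the very definition of $\Theta_\A$ the quantity $|\Div\A|(B_\rho(x))/(\omega_{N-1}\rho^{N-1})$ has zero $\limsup$ as $\rho\downarrow 0$. Applying Lemma~\ref{l:limsup} with $\Trace[i]{\A}{\cdot}$, the asymptotic gap between the $(N-1)$-averages of $\Trp{\A}{T_x\Sigma_k}$ on $T_x\Sigma_k\cap B_\rho(x)$ and of $\Trp{\A}{\Sigma_k}$ on $\Sigma_k\cap B_\rho(x)$ is infinitesimal as $\rho\downarrow 0$. Because $\Trp{\A}{\Sigma_k}\in L^\infty_{\rm loc}(\Sigma_k,\hh\res\Sigma_k)$ and $\Sigma_k$ is $C^1$, the Lebesgue--Besicovitch differentiation theorem on $\Sigma_k$ gives
\[
\lim_{\rho\downarrow 0}\frac{1}{\hh(\Sigma_k\cap B_\rho(x))}\int_{\Sigma_k\cap B_\rho(x)}\Trp{\A}{\Sigma_k}\, d\hh=\Trp{\A}{\Sigma_k}(x)
\]
for $\hh$-a.e.\ $x\in\Sigma_k$. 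Since the union of the two exceptional sets is still $\hh$-null, for $\hh$-a.e.\ $x\in N_k\setminus\Theta_\A$ one obtains
\[
\lim_{\rho\downarrow 0}\frac{1}{\omega_{N-1}\rho^{N-1}}\int_{T_x\Sigma_k\cap B_\rho(x)}\Trp{\A}{T_x\Sigma_k}\, d\hh=\Trp{\A}{\Sigma_k}(x).
\]

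To conclude, I apply Lemma~\ref{l:traces} to the hyperplane $T=T_x\Sigma_k$ with normal $\nu=\nu_{\Sigma_k}(x)$: this shows that the inner limit $r\downarrow 0$ in the definition of $\Cyl{\A}{\nu_\Sigma}(x)$ exists and coincides, up to the standard normalization, with the $(N-1)$-average of $\Trp{\A}{T_x\Sigma_k}$ on $T_x\Sigma_k\cap B_\rho(x)$. Passing to the outer limit $\rho\downarrow 0$ via the previous display yields $\Cyl{\A}{\nu_\Sigma}(x)=\Trp{\A}{\Sigma_k}(x)=\Trm{\A}{\Sigma_k}(x)$ $\hh$-a.e.\ on $N_k\setminus\Theta_\A$, and the first step transfers this back to $\Sigma$. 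The main delicacy is the nested double limit defining $\Cyl$: one must simultaneously control the exceptional $\hh$-null sets arising from the rectifiable decomposition, from Lemma~\ref{l:limsup} paired with Lebesgue--Besicovitch, and from the $r\downarrow 0$ step of Lemma~\ref{l:traces} (which, being valid pointwise, contributes no new exceptional set). This is handled by taking countable unions of $\hh$-null sets.
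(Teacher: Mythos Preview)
Your proof is correct and follows essentially the same route as the paper: reduce to a $C^1$ hypersurface via the rectifiable decomposition and locality of traces, use~\eqref{f:trA} together with the decomposition of $\Div\A$ to identify $\Trp{\A}{\Sigma}$ and $\Trm{\A}{\Sigma}$ off $\Theta_\A$, then combine Lemma~\ref{l:limsup} (with the vanishing $\limsup$ off $\Theta_\A$) and the Lebesgue point property of the trace to obtain the tangent-plane average limit, and finally invoke Lemma~\ref{l:traces} to identify this with the cylindrical average. The only cosmetic difference is the order of presentation (you reduce to $C^1$ first, the paper does the $C^1$ case first and then reduces), and you spell out the exceptional-set bookkeeping a bit more explicitly.
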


\begin{proof}
As a first step, let us prove the theorem in the case $\Sigma$ is an oriented $C^1$ hypersurface, with (classical) normal vector field $\nu_\Sigma$.

Let $x\in \Omega \cap \Sigma\setminus \Theta_{\A}$ be a Lebesgue point for both
$\Trp{\A}{\Sigma}$ and $\Trm{\A}{\Sigma}$
with respect to $\hh\res\Sigma$.
From Lemma~\ref{l:limsup}, and recalling the definition \eqref{f:jump}
of $\jump{\A}$, we deduce that there exist the limits
\begin{equation}\label{f:trlim}
\lim_{\rho\downarrow 0}
\frac1{\omega_{N-1}\rho^{N-1}}\int_{T_x\Sigma\cap B_\rho(x)}
\Trie{\A}{T_x\Sigma}(y)\,d\mathcal H^{N-1}(y)
= \Trie{\A}{\Sigma}(x)\,.
\end{equation}
On the other hand, it holds that
\begin{equation*}
\Trp{\A}{\Sigma}=\Trm{\A}{\Sigma}
%=:\Tr({\A},\Sigma)
\qquad
\text{$\mathcal H^{N-1}$-a.e. in $\Omega \cap \Sigma\setminus \jump{\A}$}.
\end{equation*}
%where $\jump{\A}$ is the jump set of $\diver\A$ defined in \eqref{f:jump}.
Specifically,
by \eqref{f:trA}, we have that
\[
(\Trp{\A}{\Sigma}-\Trm{\A}{\Sigma}) \hh\res (\Omega \cap \Sigma\setminus\jump{\A})
= \diver\A \res (\Omega \cap \Sigma\setminus\jump{\A})
%= \diver^d\A \res\Sigma 
= 0.
\]
From \eqref{f:trlim} and Lemma~\ref{l:traces} we deduce that,
for $\hh$-a.e.\ $x\in \Omega \cap \Sigma\setminus\jump{\A}$,
\[
\Trp{\A}{\Sigma}(x) =\Trm{\A}{\Sigma}(x) =
\lim_{\rho\downarrow 0} \lim_{r\downarrow 0} 
\frac1{\omega_{N-1}\rho^{N-1} r}
\int_{C_{r,\rho}(x, \nu_\Sigma(x))} \A(y)\cdot \nu_\Sigma(x) \, dy
= \Cyl{\A}{\nu_\Sigma}(x),
\]
hence the claim is proved.

The general case with $\Sigma\subset\R^N$ oriented countably
$\hh$-rectifiable set
follows directly from the previous step and the definition of
weak normal trace on $\Sigma$ (Section \ref{distrtraces}).
\end{proof}
%%%%%%%%%%%%%%%%%%%%%%%%%%%

\begin{corollary}\label{3.5} 
Let $\A\in\mathcal{DM}^{\infty}_{\rm loc}(\Omega)$ and let $E$ be a set of locally finite perimeter in $\Omega$. Then for $|D\chi_{E}|$-a.e. $x\in \Omega\setminus\Theta_{\A}$ the limit
\[
\Cyl{\A}{\nu_E}(x)
:=\lim_{\rho\to 0^+}\lim_{r\to 0^+}
\frac{1}{|C_{r,\rho}(x,\nu_{E}(x))|}
\int_{C_{r,\rho}(x,\nu_{E}(x))}
\A(y)\cdot\nu_{E}(x)\,dy
\]
exists, 
and 
\begin{equation*}
\theta(\A,D\chi_{E},x) = \Cyl{\A}{\nu_{E}}(x)
\qquad
\text{for $|D\chi_{E}|$-a.e. $x\in \Omega\setminus\Theta_{\A}$}.
\end{equation*}
\end{corollary}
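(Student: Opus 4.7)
The plan is to combine two results already established in the excerpt: the representation of the density of the pairing with a characteristic function in terms of the averaged normal trace (Corollary \ref{c:paironch}), and the coincidence between the normal trace and the cylindrical average outside $\Theta_{\A}$ (Theorem \ref{t:trcyl}).

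First, I would recall that, since $E$ has locally finite perimeter in $\Omega$, the reduced boundary $\partial^* E$ is a countably $\Haus{N-1}$-rectifiable set, canonically oriented by the measure theoretic interior normal $\nu_E$, and $|D\chi_E| = \Haus{N-1} \res \partial^* E$. Thus a property holds $|D\chi_E|$-a.e. in $\Omega \setminus \Theta_{\A}$ if and only if it holds $\Haus{N-1}$-a.e. in $\partial^* E \cap \Omega \setminus \Theta_{\A}$. By Corollary~\ref{c:paironch}, the density of the pairing satisfies
\[
\theta(\A, D\chi_E, x) = \Trace[*]{\A}{\partial^* E}(x) = \frac{\Trp{\A}{\partial^* E}(x) + \Trm{\A}{\partial^* E}(x)}{2}
\]
for $\Haus{N-1}$-a.e.\ $x \in \partial^* E$.

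Second, I would apply Theorem~\ref{t:trcyl} to the oriented countably $\Haus{N-1}$-rectifiable set $\Sigma = \partial^* E$, with orientation $\nu_\Sigma = \nu_E$. The theorem yields the existence of the cylindrical limit $\Cyl{\A}{\nu_E}(x)$ together with the identity
\[
\Trp{\A}{\partial^* E}(x) = \Trm{\A}{\partial^* E}(x) = \Cyl{\A}{\nu_E}(x),
\qquad
\text{for $\Haus{N-1}$-a.e. } x \in \partial^* E \cap \Omega \setminus \Theta_{\A}.
\]
In particular, the average $\Trace[*]{\A}{\partial^* E}$ coincides with $\Cyl{\A}{\nu_E}$ on the same set.

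Combining the two identities gives $\theta(\A, D\chi_E, x) = \Cyl{\A}{\nu_E}(x)$ for $\Haus{N-1}$-a.e.\ $x \in \partial^* E \cap \Omega \setminus \Theta_{\A}$, which is exactly the claim when rewritten in terms of $|D\chi_E|$. There is essentially no obstacle here: the proof is a direct application of two earlier results, once one verifies that the orientations used in Corollary~\ref{c:paironch} and in Theorem~\ref{t:trcyl} are consistent (which is automatic, since both are induced by $\nu_E = \nu_{\partial^* E}$).
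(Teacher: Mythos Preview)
Your proof is correct and follows essentially the same route as the paper: both apply Theorem~\ref{t:trcyl} with $\Sigma=\partial^*E$ to identify the cylindrical average with the normal traces outside $\Theta_{\A}$, and then invoke the representation of $\theta(\A,D\chi_E,\cdot)$ in terms of $\Trace[*]{\A}{\partial^*E}$. The only cosmetic difference is that the paper cites Theorem~\ref{t:repr} for this representation, while you cite the more specialized Corollary~\ref{c:paironch}; for characteristic functions the two statements coincide (since $Du=D^ju$ and $J_u=\partial^*E$ up to $\Haus{N-1}$-null sets), so your choice is arguably the more direct one.
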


\begin{remark}
This result has been proved in
\cite[Theorem 3.5]{Anz2} under the stronger assumption $\Div\A\in L^1(\Omega)$.
\end{remark}

\begin{proof}
It is a consequence of Theorem~\ref{t:trcyl}, with
$\Sigma = \partial^* E$,
and of Theorem~\ref{t:repr}. 
\end{proof}

\begin{corollary}\label{coroll} %(Theorem 3.5 del preprint di [Anzellotti])
Let $\A\in\mathcal{DM}^{\infty}_{\rm loc}(\Omega)$
and let $u\in BV_{\rm loc}(\Omega)\cap L^\infty_{\rm{loc}}(\Omega)$. Let us denote $E_{t} :=\{u>t\}$. For $\mathcal L^1$-a.e. $t\in \R$ and for $|D\chi_{E_{t}}|$-a.e. $x\in \Omega\setminus\Theta_{\A}$
the limit
\[
\Cyl{\A}{\nu_{E_{t}}}(x)
:=\lim_{\rho\to 0^+}\lim_{r\to 0^+}
\frac1{|C_{r,\rho}(x,\nu_{E_{t}}(x))|}
\int_
{C_{r,\rho}(x,\nu_{E_{t}}(x))}
\A(y)\cdot\nu_{E_{t}}(x)\,dy
\]
exists, 
and it holds that
\begin{equation*}
\theta(\A,D\chi_{E_{t}},x) =
\Cyl{\A}{\nu_{E_{t}}}(x) \qquad
\text{for $|D\chi_{E_t}|$-a.e. $x\in \Omega\setminus\Theta_{\A}$}.
\end{equation*}
\end{corollary}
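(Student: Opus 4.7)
The plan is to reduce the assertion fiberwise to Corollary~\ref{3.5}. First, by the coarea formula for $BV$ functions (Theorem~\ref{coarea}) applied on an exhausting sequence of open subsets of $\Omega$, there exists a Lebesgue-negligible set $Z \subset \R$ such that for every $t \in \R \setminus Z$ the superlevel set $E_t := \{u > t\}$ has locally finite perimeter in $\Omega$. This relies only on $u \in BV_{\rm loc}(\Omega)$, so $L^\infty_{\rm loc}$-boundedness is not essential at this step but ensures that the coarea integral is locally finite.

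Next, for each fixed $t \in \R \setminus Z$, I would apply Corollary~\ref{3.5} directly to $\A$ and $E := E_t$. This immediately yields that, for $|D\chi_{E_t}|$-a.e.\ $x \in \Omega \setminus \Theta_{\A}$, the iterated cylindrical limit
\[
\Cyl{\A}{\nu_{E_t}}(x)
= \lim_{\rho\to 0^+}\lim_{r\to 0^+}
\frac{1}{|C_{r,\rho}(x,\nu_{E_{t}})|}
\int_{C_{r,\rho}(x,\nu_{E_{t}})}
\A(y)\cdot\nu_{E_{t}}(x)\,dy
\]
exists and coincides with $\theta(\A, D\chi_{E_t}, x)$. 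Since this holds for every $t$ outside a $\Leb{1}$-null set $Z$ that does not depend on the point $x$, the statement of the corollary follows at once.

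I do not foresee any genuine obstacle: the argument is essentially a change-of-quantifier combining the two ingredients already established, namely the almost-everywhere finite perimeter of level sets of a $BV$ function and the cylindrical-average representation of normal traces on rectifiable sets outside $\Theta_{\A}$. The only minor point to be careful about is to record that the exceptional set in the coarea formula is chosen once and for all (independently of $x$) before invoking Corollary~\ref{3.5}, so that the two ``almost everywhere'' clauses are logically well ordered.
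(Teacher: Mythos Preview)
Your proposal is correct and matches the paper's approach: the paper does not even spell out a proof for this corollary, treating it as an immediate consequence of Corollary~\ref{3.5} applied to each superlevel set $E_t$ that has locally finite perimeter (which is $\Leb{1}$-a.e.\ $t$ by the coarea formula). Your write-up makes this explicit and handles the order of quantifiers cleanly.
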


Using Corollary~\ref{3.5}, we get a refinement of Theorem~\ref{t:gGG} under a compatibility condition.
  
\begin{theorem}\label{t:GG}
Let $\A \in \mathcal{DM}^{\infty}_{\rm loc}(\Omega)$ and $u\in \BVAlocloc$.
Let \(E\) be a set with locally finite perimeter in $\Omega$ such that $\supp(\chi_E u) \Subset \Omega$. Assume that the traces $u^i, u^e$ of $u$ on $\partial^* E$ belong to $L^1_{\rm loc}(\partial^*E, \Haus{N-1}\res \partial^* E)$. Assume also that
\begin{equation*}%\label{condit}
\Haus{N-1}(\Theta_{\A}\cap \{ x \in \partial^*E : u^{i, e}(x) \neq 0 \})=0.
\end{equation*}
Then the
following Gauss--Green formulas hold:
\begin{gather*}
\int_{E^1} u^* \, d\Div\A + \int_{E^1} d (\A, Du) = -
\int_{\partial ^*E} u^i \ \Cyl{\A}{\nu_{E}} \,
d\mathcal H^{N-1},%\label{GreenIB}
\\
\int_{E^1\cup \partial ^*E} u^* \, d\Div\A + 
\int_{E^1\cup \partial ^*E} d (\A, Du) = -
\int_{\partial^*E} u^e\ \Cyl{\A}{\nu_{E}}
\,d\mathcal H^{N-1}.%\label{GreenIB2}
\end{gather*}
\end{theorem}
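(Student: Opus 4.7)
The plan is to reduce the claim to the general Gauss--Green formulas of Theorem~\ref{t:gGG} and then replace the weak normal traces $\Trp{\A}{\partial^* E}$ and $\Trm{\A}{\partial^* E}$ by the cylindrical average $\Cyl{\A}{\nu_E}$ on $\partial^*E$, exploiting the compatibility assumption to handle the possibly bad set $\Theta_{\A}\cap\partial^*E$.

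More precisely, since the hypotheses on $\A$, $u$ and $E$ are exactly those of Theorem~\ref{t:gGG}, from \eqref{gGreenIB}--\eqref{gGreenIB2} I already obtain
\[
\int_{E^1} u^* \, d\Div\A + \int_{E^1} d\pair{\A, Du} = -\int_{\partial^* E} u^i\, \Trp{\A}{\partial^* E}\, d\hh,
\]
and the analogous identity with $E^1\cup\partial^* E$, $u^e$ and $\Trm{\A}{\partial^* E}$. Hence the theorem reduces to the two identities
\[
\int_{\partial^* E} u^i\, \Trp{\A}{\partial^* E}\, d\hh = \int_{\partial^* E} u^i\, \Cyl{\A}{\nu_E}\, d\hh,
\qquad
\int_{\partial^* E} u^e\, \Trm{\A}{\partial^* E}\, d\hh = \int_{\partial^* E} u^e\, \Cyl{\A}{\nu_E}\, d\hh.
\]

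To prove these, I would split $\partial^* E = (\partial^* E\setminus \Theta_{\A}) \cup (\partial^* E\cap \Theta_{\A})$. Since $\partial^* E$ is an oriented countably $\hh$-rectifiable set (by De Giorgi's theorem), Theorem~\ref{t:trcyl} applied with $\Sigma = \partial^* E$ yields
\[
\Trp{\A}{\partial^* E}(x) = \Trm{\A}{\partial^* E}(x) = \Cyl{\A}{\nu_E}(x)
\qquad\text{for $\hh$-a.e.\ } x\in \partial^* E\setminus \Theta_{\A},
\]
so on the first piece the two integrands in each identity coincide $\hh$-a.e. On the second piece $\partial^* E\cap \Theta_{\A}$, the compatibility assumption $\hh(\Theta_{\A}\cap\{x\in\partial^*E : u^{i,e}(x)\neq 0\})=0$ forces $u^i=u^e=0$ $\hh$-a.e.; consequently both sides of each identity vanish on $\Theta_\A\cap\partial^*E$, regardless of whether the cylindrical average exists there (we can extend $\Cyl{\A}{\nu_E}$ to be zero on this $\hh$-negligible-relative-to-$u^{i,e}$ set). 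Combining the two pieces proves the required identities, and substitution into the Gauss--Green formulas completes the proof.

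I do not expect a genuine obstacle here, since the deep ingredients (coarea formula for the pairing, representation of normal traces via cylindrical averages, and the general Gauss--Green formula) are all stated earlier. The only subtle point is the definition of $\Cyl{\A}{\nu_E}$ on $\Theta_\A\cap\partial^*E$, where the cylindrical limit may fail to exist; this is neutralized by the hypothesis $\hh(\Theta_{\A}\cap\{u^{i,e}\neq 0\}\cap\partial^* E)=0$, which ensures that the integrand $u^{i,e}\,\Cyl{\A}{\nu_E}$ is zero $\hh$-a.e.\ on that exceptional set and hence integrable.
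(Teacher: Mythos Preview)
Your proposal is correct and matches the paper's approach: the paper does not spell out a proof but simply introduces the theorem with the sentence ``Using Corollary~\ref{3.5}, we get a refinement of Theorem~\ref{t:gGG} under a compatibility condition,'' and your argument is precisely the unpacking of that sentence. The only cosmetic difference is that you invoke Theorem~\ref{t:trcyl} directly (which gives $\Trp{\A}{\partial^*E}=\Trm{\A}{\partial^*E}=\Cyl{\A}{\nu_E}$ off $\Theta_{\A}$), whereas the paper points to Corollary~\ref{3.5}; since that corollary is itself an immediate consequence of Theorem~\ref{t:trcyl}, the two routes coincide.
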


Finally, we obtain the following generalization of \cite[Theorem 3.6]{Anz2}.

\begin{theorem} 
\label{representAnz}
Let $\A\in\mathcal{DM}^{\infty}_{\rm loc}(\Omega)$ 
and let $u\in BV_{\rm loc}(\Omega)\cap L^\infty_{\rm{loc}}(\Omega)$.
Then 
\begin{equation}\label{fuori}
(\A, Du) \res (\Omega \setminus\jump{\A})
=
\Cyl{\A}{\nu_{u}}\,|Du|  \res (\Omega \setminus\jump{\A})\,.
\end{equation}
If in addition we assume that
\begin{equation}\label{condit2}
\Haus{N-1}(\Theta_{\A}\cap J_u)=0,
\end{equation}
then 
\begin{equation}\label{f:thetajump}
\theta(\A,Du,x) = \Cyl{\A}{\nu_u}(x),
\qquad
\text{ for $\Haus{N-1}$-a.e. $x \in J_u$}.
\end{equation}
Finally, if $\Omega$ is an open bounded set such that $\Haus{N-1}(\partial \Omega) < \infty$ and $\Haus{N-1}(\partial \Omega \setminus \partial^* \Omega) = 0$, $\A\in\mathcal{DM}^{\infty}(\Omega)$, $u\in BV(\Omega)\cap L^\infty(\Omega)$ and \eqref{condit2} holds, then
\begin{equation}
\label{gaugre}
\int_{\Omega}\,u^* \, d \Div\A+\int_{\Omega}
\Cyl{\A}{\nu_u}\,d |Du| =
-\int_{\partial\Omega}u^i\,\Trp{\widehat{\A}}{\partial\Omega}\,d\Haus{N-1},
\end{equation}
where $\widehat{\A}$ is the zero extension of $\A$ to $\R^N \setminus \Omega$.
%$$\widehat{\A}(x) := \begin{cases} \A(x) & \text{ if } x \in \Omega \\
%0 & \text{ if } x \in \R^N \setminus \Omega
%\end{cases}.$$
\end{theorem}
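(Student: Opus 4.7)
The plan is to treat the three displayed identities in turn, with \eqref{fuori} as the main step.

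For \eqref{fuori}, I combine Proposition~\ref{p:theta}, Corollary~\ref{coroll}, and the coarea formula. For $\Leb{1}$-a.e.\ $t\in\R$, Proposition~\ref{p:theta} gives $\theta(\A, Du, x) = \theta(\A, D\chi_{E_t}, x)$ for $|D\chi_{E_t}|$-a.e.\ $x$, while Corollary~\ref{coroll} gives $\theta(\A, D\chi_{E_t}, x) = \Cyl{\A}{\nu_{E_t}}(x)$ for $|D\chi_{E_t}|$-a.e.\ $x\in\Omega\setminus\jump{\A}$. Using the identification $\nu_{E_t}(x) = \nu_u(x)$ for $\hh$-a.e.\ $x\in\partial^*E_t$ and integrating in $t$ through the coarea formula $|Du| = \int_\R |D\chi_{E_t}|\, dt$ (Theorem~\ref{coarea}), I obtain $\theta(\A, Du, x) = \Cyl{\A}{\nu_u}(x)$ for $|Du|$-a.e.\ $x\in\Omega\setminus\jump{\A}$, which is \eqref{fuori}. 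Identity \eqref{f:thetajump} follows immediately: on $J_u$ the measures $|D^j u|$ and $\hh\res J_u$ are mutually absolutely continuous (since $u^+>u^-$ there), hence \eqref{fuori} yields the equality $\hh$-a.e.\ on $J_u\setminus\jump{\A}$, and the hypothesis \eqref{condit2} promotes it to $\hh$-a.e.\ on $J_u$.

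For \eqref{gaugre}, I let $\widehat{u}$ and $\widehat{\A}$ be the zero extensions of $u$ and $\A$ to $\R^N$. Since $\Omega$ is bounded with $\hh(\partial\Omega)<\infty$ and $\hh(\partial\Omega\setminus\partial^*\Omega)=0$, $\Omega$ has finite perimeter, $\widehat{\A}\in\mathcal{DM}^{\infty}(\R^N)$, and $\widehat{u}\in BV(\R^N)\cap L^\infty(\R^N)$. Applying Theorem~\ref{t:gGG} to $\widehat{\A}$, $\widehat{u}$ and $E=\Omega$ yields
\begin{equation*}
\int_{\Omega^1} \widehat{u}^* \, d\Div\widehat{\A} + \int_{\Omega^1} d(\widehat{\A}, D\widehat{u}) = -\int_{\partial^*\Omega} u^i\,\Trp{\widehat{\A}}{\partial^*\Omega}\,d\hh.
\end{equation*}
Since $\hh(\partial\Omega\setminus\partial^*\Omega)=0$ and both $\Div\widehat{\A}$ and $|D\widehat{u}|$ are absolutely continuous with respect to $\hh$, the contribution of $\Omega^1\setminus\Omega$ is negligible, and by locality the restrictions to $\Omega$ of the pairing and of the divergence coincide with $(\A, Du)$ and $\Div\A$. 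To replace $\int_\Omega d(\A, Du)$ by $\int_\Omega \Cyl{\A}{\nu_u}\,d|Du|$ via \eqref{fuori}, it suffices to verify that $|Du|(\jump{\A})=0$: the absolutely continuous part vanishes because $\LLN(\jump{\A})=0$; the Cantor part vanishes because $\jump{\A}$ is $\sigma$-finite with respect to $\hh$ (Proposition~\ref{p:basicVF}); the jump part vanishes since $|D^j u|\ll\hh\res J_u$ and $\hh(J_u\cap\jump{\A})=0$ by \eqref{condit2}.

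The main delicate point is the coarea integration for \eqref{fuori}: one must choose Borel measurable representatives of $\Cyl{\A}{\nu_u}(\cdot)$ and carefully track the $\hh$-null exceptional sets on individual level sets so that they combine, through coarea, into a $|Du|$-null set on $\Omega\setminus\jump{\A}$. The remaining arguments are routine consequences of the tools already available in the paper.
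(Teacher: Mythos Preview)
Your proposal is correct and follows essentially the same route as the paper. For \eqref{fuori} the paper integrates the measure identity $(\A,D\chi_{E_t})\res(\Omega\setminus\Theta_{\A}) = \Cyl{\A}{\nu_{E_t}}\,|D\chi_{E_t}|\res(\Omega\setminus\Theta_{\A})$ directly in $t$ via Theorem~\ref{t:coarea}, whereas you route through Proposition~\ref{p:theta} at the level of densities and then apply the $BV$ coarea formula; since Proposition~\ref{p:theta} is itself a consequence of Theorem~\ref{t:coarea}, the two arguments are interchangeable. Your treatment of \eqref{f:thetajump} coincides with the paper's.

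The one point that deserves a sharper justification is your assertion, in the proof of \eqref{gaugre}, that the zero extension $\widehat{\A}$ belongs to $\mathcal{DM}^{\infty}(\R^N)$. This is not automatic from $\A\in\mathcal{DM}^{\infty}(\Omega)$ and the perimeter assumptions on $\Omega$: one needs to know that the interior normal trace of $\A$ on $\partial\Omega$ exists in $L^\infty(\partial\Omega,\Haus{N-1})$ and that $\Div\widehat{\A} = \Div\A\res\Omega - \Trp{\widehat{\A}}{\partial\Omega}\,\Haus{N-1}\res\partial\Omega$ as a finite measure. The paper handles this by invoking the extension results of \cite[Theorems~5.1 and~6.2]{CD4} (see also \cite[Corollary~5.5]{ComiPayne}); you should cite these as well rather than treat the extension as obvious. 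Once this is in place, your remaining reductions (negligibility of $\Omega^1\setminus\Omega$, locality of the pairing, and $|Du|(\Theta_{\A})=0$ under \eqref{condit2}) are correct and match the paper's argument.
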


\begin{proof}
By Corollary~\ref{coroll}, for $\mathcal L^1$-a.e.\ $t\in \R$ and 
for $|D\chi_{E_{t}}|$-a.e.\ $x\in \Omega\setminus\Theta_{\A}$ 
the limit in the definition of
$\Cyl{\A}{\nu_{E_{t}}}(x)$
exists,  and it holds that
\begin{equation}\label{ghghjjj}
(\A,D\chi_{E_{t}}) \res {(\Omega\setminus\Theta_{\A})}
=
\Cyl{\A}{\nu_{E_{t}}}\,|D\chi_{E_{t}}|
 \res {(\Omega\setminus\Theta_{\A})}\,.
\end{equation}
Since
\begin{equation*}%\label{normali}
\frac{D\chi_{E_{t}}}{|D\chi_{E_{t}}|}=\frac{Du}{|Du|} = \nu_u \,,
\quad
\text{$|D\chi_{E_{t}}|$-a.e.\ in}\ \Omega,
\quad
\text{for $\Leb{1}$-a.e.\ $t\in\R$},
\end{equation*}
(see \cite[\S 4.1.4, Theorem~2(i)]{GMS1}), 
for $\mathcal L^1$-a.e. $t\in \R$ and for $|D\chi_{E_{t}}|$-a.e. $x\in \Omega\setminus\Theta_{\A}$ (i.e. for $|Du|$-a.e. $x\in \Omega\setminus\Theta_{\A}$) there exists the limit $\Cyl{\A}{\nu_u}(x)$ and
\begin{equation}\label{cyl}
\Cyl{\A}{\nu_u} =
\Cyl{\A}{\nu_{E_{t}}}
\qquad
\text{$|D\chi_{E_{t}}|$-a.e.\ in}\ \Omega\setminus\Theta_{\A}.
\end{equation}
For every Borel set $B\subset\Omega$,
by the coarea formula (see Theorem \ref{t:coarea}), by \eqref{ghghjjj}, \eqref{cyl}  and the coarea formula in $BV$ (see Theorem~\ref{coarea}), we have that
\[
\begin{split}
(\A, Du) & \res (\Omega\setminus\jump{\A})(B)
=
(\A, Du)(B\setminus\jump{\A})
\\ & =
\int_{\R} (\A, D\chi_{E_{t}})(B\setminus\jump{\A}) \, dt
=
\int_{\R}
\int_{B\setminus\jump{\A}}
\Cyl{\A}{\nu_u} \, d| D\chi_{E_{t}}| \, dt
\\
& =
\int_{B\setminus\jump{A}}
\Cyl{\A}{\nu_u}\,d |Du|\,,
\end{split}
\]
so that \eqref{fuori} holds.

In order to prove \eqref{f:thetajump}, we notice that the assumption \eqref{condit2} implies that $|D^j u|(\Theta_{\A})=0$, so that $(\A,D^ju)(\Theta_{\A})=0$. 
Therefore, \eqref{fuori} implies that
\begin{equation*}%\label{fuoriteta3}
(\A,D^ju)=\Cyl{\A}{\nu_u}\, |D^j u|.
\end{equation*}
Finally, we deduce \eqref{gaugre} by extending $\A$ and $u$ to zero on $\R^N \setminus \Omega$, and then exploiting \cite[Theorems 5.1 and 6.2]{CD4} (see also \cite[Corollary 5.5]{ComiPayne}), the Gauss–Green formula \eqref{gGreenIB} and \eqref{f:thetajump}.
\end{proof}

\begin{remark} \label{rem:Cantor_1}
Recalling that $\jump{\A}$ has \(\sigma\)-finite \(\hh\)-measure,
we have that $|D^du|(\jump{\A})=0$,
and hence
$(\A,Du)^d(\jump{\A})=0$.
Thus, from \eqref{fuori} we deduce that
\begin{equation}\label{fuoriteta2}
(\A,Du)^d = \Cyl{\A}{\nu_u}\, |D^d u|.
\end{equation}
We emphasize that \eqref{fuoriteta2} gives a pointwise representation for the density of the Cantor part $(\A,Du)^c$ of the pairing measure, i.e.
\begin{equation*}%\label{cantor repr}
(\A,Du)^c= \Cyl{\A}{\nu_u}\, |D^c u| = \Cyl{\A}{\frac{D^cu}{|D^cu|}}\, |D^c u|,
\end{equation*}
without any assumption on $\jump{\A}$.
\end{remark}

\medskip

The main drawback of the previous representation formula, as showed also in Example \ref{e:no}, is that it fails on the intersection of the jump sets of the divergence-measure and the $BV$ function. In order to circumvent this issue, we conclude this section with one more representation of the pairing, obtained combining Theorem~\ref{t:repr} and the following result  
\cite[Theorem~4.4]{Silh} (see also \cite[Remark 6.4]{ComiPayne} and \cite[Theorem~3]{Silh2019}), representing the normal traces of the 
field $\A$ as limits of averages in half balls.

\begin{theorem}\label{t:sil}
Let $\A \in \mathcal{DM}^{\infty}_{\rm loc}(\Omega)$
and 
%let \(E \subset \R^N\) be a set with finite perimeter and $\nu=\nu_E$ its measure theoretic unit interior normal.
let $\Sigma$ be an oriented countably $\hh$-rectifiable set with normal $\nu_\Sigma$. 
Then, for $\hh$-a.e.\ $x\in\Sigma \cap \Omega$
it holds that
\begin{gather}
%\Trp{\A}{\partial^* E}(x) =
\Trp{\A}{\Sigma}(x) =
\lim_{r\to 0}
\frac{N}{\omega_{N-1} r^N}
\int_{B_r^i(x,\nu_\Sigma(x))} \A(y) \cdot \frac{y-x}{|y-x|}\, dy,
\label{f:sil1}\\
%\Trm{\A}{\partial^* E}(x) = 
\Trm{\A}{\Sigma}(x) = 
- \lim_{r\to 0}
\frac{N}{\omega_{N-1} r^N}
\int_{B_r^e(x,\nu_\Sigma(x))} \A(y) \cdot \frac{y-x}{|y-x|}\, dy,
\label{f:sil2}
\end{gather}
where $$B_r^i(x,\nu_\Sigma(x)) := \{y\in B_r(x):\ (y-x)\cdot \nu_\Sigma(x) > 0\}$$ and $$B_r^e(x,\nu_\Sigma(x)) := \{y\in B_r(x):\ (y-x)\cdot \nu_\Sigma(x) < 0\}.$$
\end{theorem}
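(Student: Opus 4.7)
The natural strategy is first to reduce the general rectifiable case to that of a \(C^1\) hypersurface via the local definition of weak normal traces recalled in Section~\ref{distrtraces}, and then to exploit a Gauss--Green identity on the one-sided neighbourhood of \(\Sigma\) inside \(B_r(x)\), using the radial distance \(f(y) := |y-x|\) as the test function. Since the half-balls \(B_r^i(x,\nu)\) and \(B_r^e(x,\nu)\) depend only on \(\nu(x) = \nu_\Sigma(x)\), and since \(\Trp{\A}{\Sigma}\) and \(\Trm{\A}{\Sigma}\) are defined locally through the corresponding traces on an ambient oriented \(C^1\) hypersurface \(\Sigma_k\), the problem reduces to the case where \(\Sigma\) itself is a \(C^1\) hypersurface.

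Writing \(\Sigma\) locally as a \(C^1\) graph over \(T_x\Sigma\), consider the open set \(E_r\subset B_r(x)\) lying on the \(\nu(x)\)-side of \(\Sigma\); the \(C^1\) regularity gives \(\Leb{N}(E_r\vartriangle B_r^i(x,\nu)) = o(r^N)\). Applying Theorem~\ref{t:gGG} twice to \(\A\) on \(E_r\), once with \(u \equiv 1\) and once with \(u = f\), and subtracting \(r\) times the former identity from the latter so as to eliminate the trace of \(\A\) on the spherical cap \(\partial B_r(x) \cap \overline{E_r}\), produces the key identity
\begin{equation*}
\int_{E_r}\A(y)\cdot\frac{y-x}{|y-x|}\,dy
= \int_{\Sigma\cap B_r(x)} (r - |y-x|)\,\Trp{\A}{\Sigma}(y)\,d\hh(y)
+ r\,\Div\A(E_r^1) - \int_{E_r^1}|y-x|\,d\Div\A.
\end{equation*}
A polar-coordinate computation gives \(\int_{T_x\Sigma\cap B_r(x)}(r - |y-x|)\,d\hh = \omega_{N-1} r^N/N\), and the \(C^1\) regularity of \(\Sigma\) carries this to \(\int_{\Sigma\cap B_r(x)}(r - |y-x|)\,d\hh = \omega_{N-1} r^N/N + o(r^N)\). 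Dividing by \(\omega_{N-1} r^N/N\), the weight \((r-|y-x|)\) on \(\Sigma\cap B_r(x)\) is a radially decreasing density whose total mass asymptotically equals \(\omega_{N-1} r^N/N\), so at every Lebesgue point of \(\Trp{\A}{\Sigma}\) with respect to \(\hh\res\Sigma\) (hence at \(\hh\)-a.e.\ \(x\in\Sigma\)) the first summand on the right converges to \(\Trp{\A}{\Sigma}(x)\); replacing \(E_r\) by \(B_r^i(x,\nu)\) on the left introduces only an \(o(1)\) error. Formula \eqref{f:sil2} then follows from the symmetric argument on the opposite side with the exterior trace \(\Trm{\A}{\Sigma}\).

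The main obstacle is to show that the two divergence terms \(r\,\Div\A(E_r^1)\) and \(\int_{E_r^1}|y-x|\,d\Div\A\), each bounded in modulus by \(r\,|\Div\A|(B_r(x))\), are \(o(r^N)\) for \(\hh\)-a.e.\ \(x\in\Sigma\): indeed, the ratio \(|\Div\A|(B_r(x))/r^{N-1}\) need not vanish at points of \(\jump{\A}\), so the naive bound fails. The crucial observation is that \(E_r^1\) is disjoint from \(\Sigma\): by the \(\hh\)-rectifiability of \(\jump{\A}\), for \(\hh\)-a.e.\ \(x\in\Sigma\cap\jump{\A}\) the jump part \(\Div^j\A\) is locally concentrated on a hypersurface tangent to \(\Sigma\) at \(x\) and therefore contributes only \(o(r^{N-1})\) to \(\Div\A(E_r^1)\); meanwhile the absolutely continuous part of \(\Div\A\) is \(O(r^N)\), and the Cantor part vanishes on sets of \(\sigma\)-finite \(\hh\)-measure. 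This delicate measure-theoretic step is what makes the representation valid at \(\hh\)-a.e.\ point of \(\Sigma\), including those in the jump set of \(\Div\A\).
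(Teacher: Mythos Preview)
The paper does not give its own proof of Theorem~\ref{t:sil}: the result is quoted from \v{S}ilhav\'y \cite[Theorem~4.4]{Silh} (with further references to \cite{ComiPayne,Silh2019}) and is used as a black box to derive Corollary~\ref{c:repr}. There is therefore no proof in the paper to compare your sketch against.

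That said, your strategy is sound and is close in spirit to \v{S}ilhav\'y's argument: the reduction to a $C^1$ hypersurface is standard, and the key identity obtained by applying the Gauss--Green formula on $E_r$ with $u\equiv 1$ and with $u(y)=|y-x|$ (a Lipschitz function, so $(\A,Du)=\A\cdot\nabla u\,\Leb{N}$) and then subtracting to kill the spherical boundary term is exactly right. The layer-cake identity $\int_{\Sigma\cap B_r(x)}(r-|y-x|)\,g\,d\hh=\int_0^r\int_{\Sigma\cap B_\rho(x)}g\,d\hh\,d\rho$ reduces the convergence of the surface term to the Lebesgue-point property together with $\hh(\Sigma\cap B_\rho(x))=\omega_{N-1}\rho^{N-1}+o(\rho^{N-1})$.

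Your treatment of the divergence remainder, however, is imprecise on two points. First, ``the absolutely continuous part of $\Div\A$ is $O(r^N)$'' is false in general (the $L^1$ density need not be locally bounded); the correct and sufficient statement is that the diffuse part $\mu_d:=|\Div^a\A|+|\Div^c\A|$ satisfies $\mu_d(B_\rho(x))=o(\rho^{N-1})$ for $\hh$-a.e.\ $x$. This follows from the density theorem (e.g.\ \cite[Theorem~2.56]{AFP}): for every $t>0$ the set $\{x:\limsup_\rho \mu_d(B_\rho(x))/\rho^{N-1}>t\}$ has finite $\hh$-measure, hence $\mu_d$ vanishes on it, hence by the same inequality its $\hh$-measure is zero. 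Second, for the jump part on $\Sigma\cap\jump{\A}$, tangency of $\jump{\A}$ to $\Sigma$ alone does not give $|\Div^j\A|(E_r^1)=o(r^{N-1})$; what you need is that, for $\hh$-a.e.\ $x\in\Sigma\cap\jump{\A}$, the set $\jump{\A}\setminus\Sigma$ has $\hh\res\jump{\A}$-density zero at $x$ (Lebesgue density of $\Sigma\cap\jump{\A}$ inside $\jump{\A}$). Since $E_r^1\cap\Sigma=\emptyset$, this combined with the Lebesgue-point property of the density $f$ of $\Div^j\A$ yields $|\Div^j\A|(E_r^1)\le\int_{(\jump{\A}\setminus\Sigma)\cap B_r(x)}|f|\,d\hh=o(r^{N-1})$. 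With these two repairs your argument goes through.
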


%As a consequence we can extend the result of
%Theorem~\ref{t:pairing}(iii) on the Cantor part of
%$\pair{\A, Du}$ without any additional assumption on 
%the approximate discontinuity set $S_{\A}$
%of the vector field $\A$.

\begin{corollary}\label{c:repr}
Let $\A \in \mathcal{DM}^{\infty}_{\rm loc}(\Omega)$ and $u\in BV_{\rm loc}(\Omega) \cap L^{\infty}_{\rm loc}(\Omega)$.
Then, for $|Du|$-a.e.\ $x\in\Omega$,
\begin{equation*}%\label{repreSil}
\theta(\A, Du, x) =
\lim_{r\to 0}
\frac{N}{2 \omega_{N-1} r^N}
\left(
\int_{B_r^i(x,\nu_u(x))} \A(y) \cdot \frac{y-x}{|y-x|}\, dy
-
\int_{B_r^e(x,\nu_u(x))} \A(y) \cdot \frac{y-x}{|y-x|}\, dy
\right)\,.
\end{equation*}
In particular,
\begin{equation}\label{f:thetareg}
\theta(\A, Du, x) = \widetilde{\A}(x) \cdot \nu_u(x),
\qquad
\text{for $|D^d u|$-a.e.}\ x\in \Omega\setminus S_{\A}.
\end{equation}
\end{corollary}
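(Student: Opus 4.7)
The strategy is to combine the pointwise identification of the density $\theta(\A,Du,\cdot)$ with normal traces provided by Theorem~\ref{t:repr} with the integral representation of normal traces via half-ball averages supplied by Theorem~\ref{t:sil}. Recall that by Theorem~\ref{t:repr}, for $|D^d u|$-a.e.\ $x\in\Omega$ the density $\theta(\A,Du,x)$ equals $\Trace[*]{\A}{\partial^*\{u>\widetilde u(x)\}}(x)$, while for $|D^j u|$-a.e.\ $x\in\Omega$ it equals $\Trace[*]{\A}{J_u}(x)$. In both cases $\theta$ is of the form $\Trace[*]{\A}{\Sigma}(x)=\tfrac{1}{2}(\Trp{\A}{\Sigma}(x)+\Trm{\A}{\Sigma}(x))$ for an appropriate oriented countably $\hh$-rectifiable set $\Sigma$ passing through $x$.

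The key compatibility is that the orientation of $\Sigma$ at $x$ coincides $|Du|$-a.e.\ with $\nu_u(x)$. For the jump part this is the orientation convention $\nu_u=\nu_{J_u}$ on $J_u$ recalled in Section~\ref{ss:BV}. For the diffuse part, by De Giorgi's theorem applied to the superlevel sets $E_t=\{u>t\}$ one has $\widetilde\nu_{E_t}=\nu_u$ at $\hh$-a.e.\ point of $\partial^* E_t$, for $\Leb{1}$-a.e.\ $t$. Therefore I would choose a Borel subset $Z\subset\R$ with $\Leb{1}(Z)=0$ outside of which the normal of $\partial^*E_t$ is exactly $\nu_u$ and Theorem~\ref{t:sil} holds for $\Sigma=\partial^*E_t$ on a full $\hh$-measure subset of $\partial^*E_t$. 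Applying the BV coarea formula (Theorem~\ref{coarea}) to the $\hh$-negligible exceptional subsets $N_t\subset\partial^* E_t$, exactly as in the proof of Theorem~\ref{t:repr}, one transfers these $\hh$-a.e.\ statements on $\partial^*E_t$ into a $|D^du|$-a.e.\ statement on $\Omega$.

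Granted this, applying the two identities \eqref{f:sil1} and \eqref{f:sil2} of Theorem~\ref{t:sil} with $\nu=\nu_u(x)$ and averaging yields
\[
\Trace[*]{\A}{\Sigma}(x)
=\lim_{r\to 0}\frac{N}{2\omega_{N-1}r^N}
\left(\int_{B_r^i(x,\nu_u(x))}\!\!\A(y)\cdot\frac{y-x}{|y-x|}\,dy
-\int_{B_r^e(x,\nu_u(x))}\!\!\A(y)\cdot\frac{y-x}{|y-x|}\,dy\right),
\]
which, together with Theorem~\ref{t:repr} (both on the diffuse and the jump parts, noting that $|Du|=|D^du|+|D^ju|$), gives the first formula.

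For \eqref{f:thetareg}, I would argue by approximate continuity of $\A$ on $\Omega\setminus S_{\A}$: at any $x\in\Omega\setminus S_{\A}$ one has
\[
\lim_{r\to 0}\frac{1}{r^N}\int_{B_r(x)}|\A(y)-\widetilde\A(x)|\,dy=0,
\]
and since $\A\in L^\infty_{\rm loc}$ the integrand in the formula is bounded, so replacing $\A(y)$ by the constant $\widetilde\A(x)$ produces an error that is $o(1)$ as $r\to 0$. For the constant vector field $\widetilde\A(x)$ the elementary symmetry $y-x\mapsto -(y-x)$ identifies the integrals over $B_r^i$ and $B_r^e$ with opposite signs, and a direct computation (or an application of Theorem~\ref{t:sil} to the constant field, whose weak normal trace on any hyperplane with normal $\nu_u(x)$ is $\widetilde\A(x)\cdot\nu_u(x)$) gives the value $\widetilde\A(x)\cdot\nu_u(x)$.

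The main delicate point is the transfer from $\hh$-a.e.\ statements on each level surface to a $|D^d u|$-a.e.\ statement on $\Omega$: one must coordinate the negligible sets in $t$ (where Theorem~\ref{t:sil} might fail on $\partial^* E_t$) with the disintegration of $|D^du|$ via coarea, exactly as already done in the proof of Theorem~\ref{t:repr}. Everything else is a straightforward bookkeeping of orientations and a routine approximate-continuity estimate.
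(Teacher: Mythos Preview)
Your proposal is correct and follows essentially the same route as the paper: combine Theorem~\ref{t:repr} with Theorem~\ref{t:sil} (splitting into the cases $x\in J_u$ with $\Sigma=J_u$ and $x\in C_u$ with $\Sigma=\partial^*\{u>\widetilde u(x)\}$), and deduce \eqref{f:thetareg} by replacing $\A(y)$ with $\widetilde\A(x)$ via approximate continuity. The paper makes the last step explicit through the identity $\frac{2N}{\omega_{N-1}r^N}\int_{B_r^{i}(x,\nu_u(x))}\frac{y-x}{|y-x|}\,dy=\nu_u(x)$ (and its counterpart on $B_r^e$), while you invoke symmetry or Theorem~\ref{t:sil} for constant fields; these are equivalent and your handling of the coarea transfer of the exceptional sets is exactly what the paper abbreviates by referring back to the proof of Theorem~\ref{t:repr}.
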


\begin{proof}
The first statement
follows from Theorems~\ref{t:repr} and~\ref{t:sil}.
More precisely, if $x\in J_u$ we use \eqref{f:reprJ}
and Theorem~\ref{t:sil} with $\Sigma = J_u$, 
whereas if $x\in C_u$ we use \eqref{f:reprC}
and Theorem~\ref{t:sil} with $\Sigma = \partial^*\{u > \ut(x)\}$.

Let us prove \eqref{f:thetareg}.
Let $x\in \Omega \setminus (S_{\A} \cup S_u)$ be a point such that
\eqref{f:reprC} holds 
%with $t=\ut(x)$ 
and 
\eqref{f:sil1}-\eqref{f:sil2} hold with $\Sigma = \partial^*\{u > \ut(x)\}$,
oriented in such a way that $\nu$ points inside the set $\{u > \ut(x)\}$
(observe that these properties hold for $|D^d u|$-a.e.\ point,
see the proof of Theorem~\ref{t:repr}).
Since, for every $r>0$,
\[
\frac{2N}{\omega_{N-1} r^N} \int_{B_r^i(x,\nu_u(x))} \frac{y-x}{|y-x|}\, dy =
\nu_u(x),
\qquad
\frac{2N}{\omega_{N-1} r^N} \int_{B_r^e(x,\nu_u(x))} \frac{y-x}{|y-x|}\, dy =
-\nu_u(x),
\]
then
\[
\begin{split}
\left|\Trp{\A}{\Sigma}(x) - \widetilde{\A}(x) \cdot \nu_u(x)\right|
& \leq {} 
\lim_{r\to 0}\left| \mean{B_r^i(x,\nu_u(x))} [\A(y) - \widetilde{\A}(x)] \cdot
\frac{2N (y-x)}{ |y-x|} \, dy \right|
\\ & \leq
\lim_{r\to 0} 2N
\mean{B_r^i(x,\nu_u(x))} |\A(y) - \widetilde{\A}(x)|\, dy = 0\,.
\end{split}
\]
An analogous computation shows that also
$\Trm{\A}{\Sigma}(x) = \widetilde{\A}(x)\cdot\nu_u(x)$, so that \eqref{f:thetareg} follows.
\end{proof}

\begin{remark}
In light of Remark \ref{rem:lambda_repr_case}, we may exploit Theorem \ref{t:sil} as in the proof of Corollary \ref{c:repr} in order to get
\begin{align*}
\theta_{\lambda}(\A, Du, x) & = (1 - \lambda(x)) \lim_{r\to 0}
\frac{N}{\omega_{N-1} r^N} \int_{B_r^i(x,\nu_u(x))} \A(y) \cdot \frac{y-x}{|y-x|}\, dy + \\
& - \lambda(x) \lim_{r\to 0}
\frac{N}{\omega_{N-1} r^N} \int_{B_r^e(x,\nu_u(x))} \A(y) \cdot \frac{y-x}{|y-x|}\, dy
\end{align*}
for $|D^j u|$-a.e. $x \in \Omega$.
\end{remark}

\section{Tangential properties of the pairing measure}

As a consequence of the representation formula in Theorem~\ref{t:repr}
we easily recover the local structure of the pairing measure by means of its
tangent measures.

For every $x\in \Omega$, let $\Ixr(y) := (y-x) / r$ denote the homothety with 
scaling 
factor $r$ mapping $x$ in $0$. For $r > 0$ small enough such that $B_r(x) \Subset \Omega$, 
the pushforward $\Ixr[x, r]_\#\mu$ of 
a Radon measure $\mu$ is the 
measure acting on a test function $\phi \in C_c(B_1)$ as 
\begin{equation*}%\label{push}
\int_{B_1} \phi \,d(\Ixr[x, r]_\#\mu)=\int_\Omega\phi\circ \Ixr d\mu.
\end{equation*}

\begin{definition}[Tangent measures]
\label{d:tang}
Let $\mu \in \mathcal{M}_{\rm loc}(\Omega)$.
We say that $\gamma$ is a tangent measure of $\mu$ at $x\in\Omega$
if $\gamma$ is a non-zero Radon measure and there exists some sequence $(r_i)$ satisfying $r_i\downarrow 0$ and such that
\[
\frac{1}{|\mu|(B_{r_i}(x)))}\, \Ixr[x, r_i]_\#\mu \overset{\ast}{\rightharpoonup} \gamma \text{ in } \mathcal{M}_{\rm loc}(B_1).
\]
%where $c_{x, r_i} := |\mu|(B(x, r_i))$.
We denote by $\Tan(\mu, x)$ the set of all tangent measures of $\mu$ at $x$.

For every $\alpha\geq 0$, we denote by $\Tan_\alpha(\mu, x)$ the family of
non-zero Radon measures $\gamma$ such that
there exists
a sequence $r_i\downarrow 0$ for which 
\[
%\frac{\Ixr[x, r_i]_\#\mu}{r_i^\alpha}
r_i^{-\alpha}\,{\Ixr[x, r_i]_\#\mu}
\overset{\ast}{\rightharpoonup} \gamma \text{ in } \mathcal{M}_{\rm loc}(B_1).
\]
\end{definition}

Following the notation established in Section \ref{s:prelim}, in the following results for any given function $f \in L^{1}_{\rm loc}(\Omega, |\mu|)$ we use the notation $f(x) := \widetilde f(x)$ for every $x \in \Omega$ Lebesgue point of $f$ with respect to $|\mu|$. We start by proving the following property of the tangent measures (for related results see \cite[Theorem 2.44]{AFP} and \cite[Lemma~14.6]{Mattila}).

\begin{lemma}
\label{l:tang}
Let $\mu \in \mathcal{M}_{\rm loc}(\Omega)$, and let $f\in L^1_{\rm loc}(\Omega, |\mu|)$.
If $x\in\Omega$ is a Lebesgue point of $f$ with respect to $|\mu|$
and $f(x)\neq 0$, then
\[
\Tan(f\mu, x) = f(x)\, \Tan(\mu, x) \quad \text{ and } \quad \Tan_\alpha(f\mu, x) = f(x)\, \Tan_\alpha(\mu, x) \quad \forall \alpha \ge 0. \]
\end{lemma}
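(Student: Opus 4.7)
The plan is to isolate a single error estimate relating the pushforward of $f\mu$ to a scalar multiple of the pushforward of $\mu$, and then to apply it under the appropriate normalizations. Specifically, for any $\phi \in C_c(B_1)$ and $r > 0$ with $B_r(x) \Subset \Omega$, linearity yields
\[
\int_{B_1} \phi \, d(\Ixr[x,r]_{\#}(f\mu)) - f(x) \int_{B_1} \phi \, d(\Ixr[x,r]_{\#}\mu)
= \int_{B_r(x)} \phi\!\left(\tfrac{y-x}{r}\right) (f(y) - f(x)) \, d\mu(y),
\]
whose absolute value is bounded by $\|\phi\|_\infty \int_{B_r(x)} |f - f(x)| \, d|\mu|$; by the Lebesgue-point hypothesis, this quantity is $o(|\mu|(B_r(x)))$ as $r \to 0^+$.

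For the $\Tan_\alpha$ identity, let $\gamma \in \Tan_\alpha(\mu,x)$ be realized along a sequence $r_i \downarrow 0$. Banach--Steinhaus, applied to the weakly-$\ast$ convergent sequence $r_i^{-\alpha} \Ixr[x,r_i]_{\#}\mu$, yields $\sup_i r_i^{-\alpha}|\mu|(B_{(1-\eps)r_i}(x)) < \infty$ for each $\eps \in (0,1)$. Since any $\phi \in C_c(B_1)$ is supported in some $\overline{B_{1-\eps}}$, multiplying the error estimate above by $r_i^{-\alpha}$ shows that $r_i^{-\alpha} \Ixr[x,r_i]_{\#}(f\mu) \weakstarto f(x)\gamma$ on $B_1$; as $f(x) \neq 0$ the limit is nonzero, so $f(x)\gamma \in \Tan_\alpha(f\mu,x)$. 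The reverse inclusion is symmetric: from $\gamma' \in \Tan_\alpha(f\mu, x)$, Banach--Steinhaus bounds $r_i^{-\alpha} |f\mu|(B_{(1-\eps)r_i}(x))$, and the Lebesgue-point property applied to $|f|$ (giving $|f\mu|(B_r(x))/|\mu|(B_r(x)) \to |f(x)| > 0$) converts this into a bound on $r_i^{-\alpha}|\mu|(B_{(1-\eps)r_i}(x))$. Dividing the main estimate by $f(x)$ and passing to the limit then yields $r_i^{-\alpha} \Ixr[x,r_i]_{\#}\mu \weakstarto \gamma'/f(x)$, so $\gamma' \in f(x) \Tan_\alpha(\mu, x)$.

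The identity $\Tan(f\mu, x) = f(x) \Tan(\mu, x)$ follows from exactly the same scheme with $c_i = 1/|\mu|(B_{r_i}(x))$ or $c_i = 1/|f\mu|(B_{r_i}(x))$ replacing $r_i^{-\alpha}$, the comparison between the two natural normalizations being supplied once more by $|f\mu|(B_r(x))/|\mu|(B_r(x)) \to |f(x)|$. The main technical hurdle is the bookkeeping around these normalizations: one has to ensure that the normalizing scalar times $|\mu|(B_{r_i}(x))$ stays bounded so that the Lebesgue-point error is negligible in the limit, which is precisely why the slight shrinking from $B_1$ to $B_{1-\eps}$ before invoking Banach--Steinhaus is both natural and necessary.
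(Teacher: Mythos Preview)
Your argument mirrors the paper's: both isolate the same error identity and control it via the Lebesgue-point hypothesis. Your handling of the $\Tan_\alpha$ case is in fact more careful than the paper's---the paper merely asserts $|\mu|(B_r(x)) \le C r^\alpha$, while your Banach--Steinhaus step with the shrinking to $B_{1-\eps}$ makes this rigorous---and that part is correct.

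There is, however, a genuine slip in the $\Tan$ case. From $\gamma \in \Tan(\mu,x)$ along $r_i$ with $c_i = 1/|\mu|(B_{r_i}(x))$, the error estimate yields $c_i\, I^{x,r_i}_\#(f\mu) \weakstarto f(x)\gamma$. But membership in $\Tan(f\mu,x)$ requires normalization by $1/|f\mu|(B_{r_i}(x))$, and the very ratio you invoke, $|f\mu|(B_{r_i}(x))/|\mu|(B_{r_i}(x)) \to |f(x)|$, shows that the renormalized limit is $\sign(f(x))\,\gamma$, not $f(x)\gamma$. Concretely, with $\mu = \delta_0$ and $f \equiv 2$ one has $\Tan(\mu,0) = \Tan(f\mu,0) = \{\delta_0\}$ while $f(0)\,\Tan(\mu,0) = \{2\delta_0\}$. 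Your scheme (and the paper's) therefore actually establishes $\Tan(f\mu,x) = \sign(f(x))\,\Tan(\mu,x)$ rather than the stated identity; the $\Tan_\alpha$ identity, whose normalization $r_i^{-\alpha}$ does not depend on the measure, is unaffected.
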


\begin{proof}
Let $x\in\Omega$ be as in the statement, and let
$r_i\downarrow 0$ (so that $B_{r_i}(x) \Subset \Omega$) be such that
at least one of the sequences
$c_i\, \Ixr[x,r_i]_\# (f\mu)$ and $c_i\, \Ixr[x,r_i]_\# \mu$
converges weakly${}^\ast$ to a Radon measure, where $c_i = \frac{1}{|\mu|(B_{r_i}(x)))}$.
To fix the ideas, assume that
$c_i\, \Ixr[x,r_i]_\# \mu \overset{\ast}{\rightharpoonup} \gamma$. 
For every $\varphi\in C_c(B_1)$, we have that
\[
c_i \left|
\int_{B_1} \varphi \, d \Ixr[x,r_i]_\# (f\mu)
- f(x) \int_{B_1} \varphi \, d \Ixr[x,r_i]_\# \mu
\right|
= 
\left|
\mean{B_{r_i}(x)} \varphi\left(\frac{y-x}{r_i}\right) [f(y) - f(x)] d\mu(y)
\right|\,,
\]
and the right-hand side converges to $0$ as $i\to +\infty$ 
since $x$ is a Lebesgue point of $f$ with respect to $\mu$.
It follows that the sequence
$c_i\, \Ixr[x,r_i]_\# (f\mu)$
converges weakly${}^\ast$ to $f(x)\, \gamma$, 
so that we can conclude that $\Tan(f\mu, x) = f(x)\, \Tan(\mu, x)$.

The equality for $\Tan_\alpha$ can be proved by dealing separately with the case in which both tangent sets are empty and the one in which they are not. In the latter we have $|\mu|(B_r(x)) \le C r^{\alpha}$ for some $C \ge 0$, so that one can argue as above.
\end{proof}

We can now state two results on the tangent measures of pairings. To this purpose, for any unit vector $\nu$ we set $\nu^\perp := \{ y \in \R^N : y \cdot \nu = 0 \}$.

\begin{theorem}
Let $\A\in\mathcal{DM}^{\infty}_{\rm loc}(\Omega)$ and
let $E$ be a set of locally finite perimeter in $\Omega$.
Let $x\in \partial^* E$ 
be a Lebesgue point of $\Trace[*]{\A}{\partial^* E}$
with respect to $|D \chi_E|$,
such that
$\Trace[*]{\A}{\partial^* E}(x) \neq 0$.
Then
\[
\Tan_{N-1}((\A, D\chi_E), x)
=
\Trace[*]{\A}{\partial^* E}(x)\,
\hh\res \nu_E^\perp(x).
\]
\end{theorem}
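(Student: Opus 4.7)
The plan is to factor the pairing measure as a density times a base measure, and then to reduce the computation of tangent measures to the (classical) blow-up of the perimeter measure of $E$ at a point of the reduced boundary.

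First, by Corollary~\ref{c:paironch}, the pairing between $\A$ and $D\chi_E$ satisfies
\[
(\A, D\chi_E) = \Trace[*]{\A}{\partial^* E}(\cdot)\, |D\chi_E|,
\qquad |D\chi_E| = \hh\res \partial^* E.
\]
Setting $\mu := |D\chi_E|$ and $f := \Trace[*]{\A}{\partial^* E}$ (extended by $0$ outside $\partial^* E$, which is irrelevant since $\mu$ is supported on $\partial^* E$), we have $f \in L^\infty(\Omega, |\mu|)$ and, by hypothesis, $x$ is a Lebesgue point of $f$ with respect to $|\mu|$ with $f(x)\neq 0$. Lemma~\ref{l:tang} then yields
\[
\Tan_{N-1}((\A, D\chi_E), x) = \Trace[*]{\A}{\partial^* E}(x)\, \Tan_{N-1}(|D\chi_E|, x),
\]
so the problem reduces to identifying $\Tan_{N-1}(|D\chi_E|, x)$.

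Next I would compute $\Tan_{N-1}(|D\chi_E|, x)$ using De Giorgi's structure theorem for sets of finite perimeter. For $r>0$ small, let $E_r := \Ixr[x,r](E) = (E-x)/r$; by the scaling property of the perimeter,
\[
r^{-(N-1)}\, \Ixr[x,r]_\#|D\chi_E| = |D\chi_{E_r}|
\quad\text{in } \mathcal{M}_{\rm loc}(B_1).
\]
Since $x\in\partial^* E$, the blow-ups $E_r$ converge in $L^1_{\rm loc}(\R^N)$ to the half-space $H_x := \{y\in\R^N : y\cdot \nuint_E(x) \leq 0\}$, and consequently $|D\chi_{E_r}| \overset{\ast}{\rightharpoonup} |D\chi_{H_x}| = \hh \res \nu_E^\perp(x)$ in $\mathcal{M}_{\rm loc}(B_1)$ as $r\to 0$ (see e.g.\ \cite[Theorem~3.59]{AFP}). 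Therefore the limit is unique along every vanishing sequence $r_i\downarrow 0$, and
\[
\Tan_{N-1}(|D\chi_E|, x) = \{\, \hh \res \nu_E^\perp(x)\,\}.
\]

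Combining the two steps gives
\[
\Tan_{N-1}((\A, D\chi_E), x) = \Trace[*]{\A}{\partial^* E}(x) \, \hh \res \nu_E^\perp(x),
\]
which is the claim. The only delicate point is the verification that the rescaling factor $r^{-(N-1)}$ is the correct one to obtain a non-zero tangent measure and to match the exponent $\alpha = N-1$ in the application of Lemma~\ref{l:tang}: this is ensured precisely by the $(N-1)$-dimensional scaling of the perimeter under homotheties, together with the assumption $\Trace[*]{\A}{\partial^* E}(x)\neq 0$, which prevents the tangent measure from degenerating to zero.
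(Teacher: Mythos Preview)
Your proof is correct and follows exactly the approach of the paper: factor $(\A,D\chi_E)$ as $\Trace[*]{\A}{\partial^*E}\,|D\chi_E|$ via Corollary~\ref{c:paironch}, apply Lemma~\ref{l:tang} to reduce to $\Tan_{N-1}(|D\chi_E|,x)$, and identify the latter by De~Giorgi's blow-up theorem. The paper merely cites these three ingredients in one line, while you have spelled out the details.
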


\begin{proof}
It is a consequence of Lemma~\ref{l:tang}, Corollary~\ref{c:paironch} and De Giorgi's theorem (see \cite[Lemma 3.58 and Theorem~3.59]{AFP}).
\end{proof}

\begin{theorem}
Let $\A\in\mathcal{DM}^{\infty}_{\rm loc}(\Omega)$ and let $u\in BV_{\rm loc}(\Omega) \cap L^\infty_{\rm loc}(\Omega)$. If $x\in \Omega \setminus S_u$ is a Lebesgue point of $\Trace[*]{\A}{\partial^*\{u > \ut(x)\}}$
with respect to $|D^du|$
such that
$\Trace[*]{\A}{\partial^*\{u > \ut(x)\}}(x) \neq 0$, then
\begin{equation*}
\Tan((\A, Du)^d , x) =
\Trace[*]{\A}{\partial^*\{u > \ut(x)\}}(x)\,
\Tan(|D^d u|, x).
\end{equation*}
If instead $x\in J_u$ 
is a Lebesgue point of $\Trace[*]{\A}{J_u}$
with respect to $|D^j u|$
such that
$\Trace[*]{\A}{J_u}(x) \neq 0$, then
\begin{equation*}
\Tan_{N-1}((\A, Du)^j, x) =
\Trace[*]{\A}{J_u}(x)\,
[u^+(x) - u^-(x)]\,
\hh\res \nu_u^\perp(x).
\end{equation*}
\end{theorem}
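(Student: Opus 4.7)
The plan is to deduce both parts of the statement directly from Theorem~\ref{t:repr} (density of the pairing in terms of weak normal traces) combined with the multiplicative property of tangent measures at Lebesgue points (Lemma~\ref{l:tang}); for the jump case, the classical Federer--Vol'pert blow-up of rectifiable measures gives the final explicit form.

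For the diffuse part I would proceed as follows. Since $\pair{\A,Du}^a$ is absolutely continuous with respect to $\Leb{N}$ and $\pair{\A,Du}^c$ is singular with respect to $\Haus{N-1}$, both contribute to $\pair{\A,Du}^d$, and Theorem~\ref{t:repr} identifies
\[
\pair{\A, Du}^d = f(\cdot)\, |D^d u|,
\qquad
f(y) := \Trace[*]{\A}{\partial^*\{u>\ut(y)\}}(y),
\]
as Radon measures. The hypothesis of the theorem supplies a point $x \in \Omega\setminus S_u$ that is a Lebesgue point of $f$ with respect to $|D^d u|$ with $f(x)\neq 0$. Lemma~\ref{l:tang} applied with $\mu = |D^d u|$ then yields
\[
\Tan(\pair{\A, Du}^d, x) = f(x)\, \Tan(|D^d u|, x),
\]
which is exactly the claimed identity.

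For the jump part I would combine Theorem~\ref{t:repr}(ii) with the polar decomposition $|D^j u| = (u^+-u^-)\,\hh\res J_u$ to write
\[
\pair{\A, Du}^j = \Trace[*]{\A}{J_u}\, |D^j u|.
\]
A first application of Lemma~\ref{l:tang} with $\mu = |D^j u|$, $f = \Trace[*]{\A}{J_u}$, and $\alpha = N-1$ gives
\[
\Tan_{N-1}(\pair{\A, Du}^j, x) = \Trace[*]{\A}{J_u}(x)\, \Tan_{N-1}(|D^j u|, x).
\]
A second application of Lemma~\ref{l:tang} with $\mu = \hh\res J_u$ and $g := u^+-u^-$ (which is bounded, strictly positive on $J_u$, and approximately continuous at $\hh$-a.e.\ point of $J_u$ with respect to $\hh\res J_u$ by the $BV$ structure) reduces the computation to $\Tan_{N-1}(\hh\res J_u, x)$. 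The rectifiability of $J_u$ together with the Federer--Vol'pert theorem identifies, at $\hh$-a.e.\ $x \in J_u$,
\[
\Tan_{N-1}(\hh\res J_u, x) = \{\hh\res \nu_u^\perp(x)\},
\]
and multiplying the three factors yields the claimed expression.

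No substantial obstacle is expected: once the representation formula of Theorem~\ref{t:repr} is in hand, the whole argument consists in pushing the multiplicative property of Lemma~\ref{l:tang} through two scalar factors and invoking the classical blow-up of rectifiable measures. The only point requiring a short check is that in the jump case the hypothesis (Lebesgue point of $\Trace[*]{\A}{J_u}$ with respect to $|D^j u|$) can be combined, at $\hh$-a.e.\ $x\in J_u$, with the Lebesgue point property of $u^+-u^-$ with respect to $\hh\res J_u$ needed for the second application of Lemma~\ref{l:tang}; since this latter property holds $\hh$-a.e.\ on $J_u$ for free, the statement follows with no further work.
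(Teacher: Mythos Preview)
Your proposal is correct and follows essentially the same route as the paper: the paper's proof is the single sentence ``It is a consequence of Lemma~\ref{l:tang}, Theorem~\ref{t:repr} and the Federer--Vol'pert theorem'', and you have simply unpacked those three ingredients. The only minor deviation is that for the jump part you split the computation into two applications of Lemma~\ref{l:tang} (first peeling off $\Trace[*]{\A}{J_u}$ from $|D^j u|$, then $u^+-u^-$ from $\hh\res J_u$), whereas a slightly more direct path is a single application of Lemma~\ref{l:tang} followed by the Federer--Vol'pert blow-up of $|D^j u|$ itself, which already yields $(u^+(x)-u^-(x))\,\hh\res\nu_u^\perp(x)$; either way the conclusion holds for $\hh$-a.e.\ $x\in J_u$, which is all that Federer--Vol'pert can give.
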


\begin{proof}
It is a consequence of Lemma~\ref{l:tang}, Theorem~\ref{t:repr} and the Federer-Vol'pert theorem (see \cite[Theorem~3.78]{AFP}).
\end{proof}

\begin{remark}
Exploiting Lemma~\ref{l:tang}, Remark \ref{rem:lambda_repr_case} and the Federer-Vol'pert theorem, we deduce the following representation for the tangent measure of the $\lambda$-pairing as well:
\begin{equation*}
\Tan_{N-1}((\A, Du)^{j}_{\lambda}, x) =
\Trace[\lambda]{\A}{J_u}(x) \,
[u^+(x) - u^-(x)]\,
\hh\res \nu_u^\perp(x),
\end{equation*}
for every $x\in J_u$ which is a Lebesgue point of $\Trace[\lambda]{\A}{J_u}$
with respect to $|D^j u|$
and such that
$\Trace[\lambda]{\A}{J_u}(x) \neq 0$,
where 
$$\Trace[\lambda]{\A}{J_u}(x) := (1-\lambda(x))\Trp{\A}{J_u}(x) +\lambda(x) \Trm{\A}{J_u}(x).$$
\end{remark}

%%%% % % % % % % % % % % % % %
%\bibliographystyle{mybst}
%\bibliography{Ricerca,Graziano-MR}
%\end{document}

\def\cprime{$'$}
% \bib, bibdiv, biblist are defined by the amsrefs package.
\begin{bibdiv}
\begin{biblist}

\bib{AmbCriMan}{article}{
      author={Ambrosio, {L.}},
      author={Crippa, {G.}},
      author={Maniglia, {S.}},
       title={Traces and fine properties of a {$BD$} class of vector fields and
  applications},
        date={2005},
      %  ISSN={0240-2963},
     journal={Ann. Fac. Sci. Toulouse Math. (6)},
      volume={14},
      number={4},
       pages={527\ndash 561},
         url={http://afst.cedram.org/item?id=AFST_2005_6_14_4_527_0},
   %   review={\MR{2188582}},
}

\bib{ADM}{incollection}{
      author={Ambrosio, {L.}},
      author={De~Lellis, {C.}},
      author={Mal\'y, {J.}},
       title={On the chain rule for the divergence of {BV}-like vector fields:
  applications, partial results, open problems},
        date={2007},
   booktitle={Perspectives in nonlinear partial differential equations},
      series={Contemp. Math.},
      volume={446},
   publisher={Amer. Math. Soc., Providence, RI},
       pages={31\ndash 67},
         url={http://dx.doi.org/10.1090/conm/446/08625},
     % review={\MR{2373724}},
}

\bib{AFP}{book}{
      author={Ambrosio, {L.}},
      author={Fusco, {N.}},
      author={Pallara, {D.}},
       title={Functions of bounded variation and free discontinuity problems},
      series={Oxford Mathematical Monographs},
   publisher={The Clarendon Press Oxford University Press},
     address={New York},
        date={2000},
     %   ISBN={0-19-850245-1},
     % review={\MR{MR1857292 (2003a:49002)}},
}

\bib{MR1814993}{article}{
   author={Andreu, F.},
   author={Ballester, C.},
   author={Caselles, V.},
   author={Maz\'{o}n, J. M.},
   title={The Dirichlet problem for the total variation flow},
   journal={J. Funct. Anal.},
   volume={180},
   date={2001},
   number={2},
   pages={347--403},
   % issn={0022-1236},
   % review={\MR{1814993}},
   doi={10.1006/jfan.2000.3698},
}

\bib{AVCM}{book}{
      author={Andreu-Vaillo, {F.}},
      author={Caselles, {V.}},
      author={Maz\'on, {J.M.}},
       title={Parabolic quasilinear equations minimizing linear growth
  functionals},
      series={Progress in Mathematics},
   publisher={Birkh\"auser Verlag, Basel},
        date={2004},
      volume={223},
      %  ISBN={3-7643-6619-2},
         url={http://dx.doi.org/10.1007/978-3-0348-7928-6},
    %  review={\MR{2033382}},
}

\bib{Anz}{article}{
      author={Anzellotti, {G.}},
       title={Pairings between measures and bounded functions and compensated
  compactness},
        date={1983},
    %    ISSN={0003-4622},
     journal={Ann. Mat. Pura Appl. (4)},
      volume={135},
       pages={293\ndash 318 (1984)},
         url={http://dx.doi.org/10.1007/BF01781073},
    %  review={\MR{750538}},
}

\bib{Anz2}{misc}{
      author={Anzellotti, {G.}},
       title={Traces of bounded vector--fields and the divergence theorem},
        date={1983},
        note={Unpublished preprint},
}

\bib{BuffaComiMira}{article}{
   author={Buffa, V.},
   author={Comi, G. E.},
   author={Miranda, M. Jr.},
   title={On BV functions and essentially bounded divergence-measure fields
   in metric spaces},
   journal={Rev. Mat. Iberoam.},
   volume={38},
   date={2022},
   number={3},
   pages={883--946},
  % issn={0213-2230},
  % review={\MR{4413759}},
   doi={10.4171/rmi/1291},
}

\bib{Cas}{article}{
      author={Caselles, V.},
       title={On the entropy conditions for some flux limited diffusion
  equations},
        date={2011},
     %   ISSN={0022-0396},
     journal={J. Differential Equations},
      volume={250},
      number={8},
       pages={3311\ndash 3348},
         url={http://dx.doi.org/10.1016/j.jde.2011.01.027},
   %   review={\MR{2772392}},
}

\bib{ChCoTo}{article}{
      author={Chen, {G.-Q.}},
      author={Comi, {G. E.}},
      author={Torres, {M.}},
       title={Cauchy fluxes and {G}auss-{G}reen formulas for divergence-measure
  fields over general open sets},
        date={2019},
   %     ISSN={0003-9527},
     journal={Arch. Ration. Mech. Anal.},
      volume={233},
      number={1},
       pages={87\ndash 166},
         url={https://doi.org/10.1007/s00205-018-01355-4},
  %    review={\MR{3974639}},
}

\bib{ChenFrid}{article}{
      author={Chen, {G.-Q.}},
      author={Frid, {H.}},
       title={Divergence-measure fields and hyperbolic conservation laws},
        date={1999},
    %    ISSN={0003-9527},
     journal={Arch. Ration. Mech. Anal.},
      volume={147},
      number={2},
       pages={89\ndash 118},
         url={http://dx.doi.org/10.1007/s002050050146},
   %   review={\MR{1702637}},
}

\bib{ChFr1}{article}{
      author={Chen, {G.-Q.}},
      author={Frid, {H.}},
       title={Extended divergence-measure fields and the {E}uler equations for
  gas dynamics},
        date={2003},
   %     ISSN={0010-3616},
     journal={Comm. Math. Phys.},
      volume={236},
      number={2},
       pages={251\ndash 280},
         url={http://dx.doi.org/10.1007/s00220-003-0823-7},
   %   review={\MR{1981992}},
}

\bib{ChTo}{article}{
      author={Chen, {G.-Q.}},
      author={Torres, {M.}},
       title={On the structure of solutions of nonlinear hyperbolic systems of
  conservation laws},
        date={2011},
    %    ISSN={1534-0392},
     journal={Commun. Pure Appl. Anal.},
      volume={10},
      number={4},
       pages={1011\ndash 1036},
         url={http://dx.doi.org/10.3934/cpaa.2011.10.1011},
   %   review={\MR{2787432 (2012c:35263)}},
}

\bib{ComiMagna}{article}{
   author={Comi, G. E.},
   author={Magnani, V.},
   title={The Gauss-Green theorem in stratified groups},
   journal={Adv. Math.},
   volume={360},
   date={2020},
   pages={106916, 85},
 %  issn={0001-8708},
 %  review={\MR{4036663}},
   doi={10.1016/j.aim.2019.106916},
}

\bib{ComiPayne}{article}{
   author={Comi, G. E.},
   author={Payne, K. R.},
   title={On locally essentially bounded divergence measure fields and sets
   of locally finite perimeter},
   journal={Adv. Calc. Var.},
   volume={13},
   date={2020},
   number={2},
   pages={179--217},
%   issn={1864-8258},
%   review={\MR{4085710}},
   doi={10.1515/acv-2017-0001},
}

\bib{comi2023fractional}{article}{
  title={Fractional divergence-measure fields, Leibniz rule and Gauss--Green formula},
  author={Comi, G. E.},
  author={Stefani, G.},
  journal={Bollettino dell'Unione Matematica Italiana},
  pages={1--23},
  year={2023},
  publisher={Springer}
}

\bib{CD3}{article}{
      author={Crasta, {G.}},
      author={De~Cicco, {V.}},
       title={Anzellotti's pairing theory and the {G}auss--{G}reen theorem},
        date={2019},
    %    ISSN={0001-8708},
     journal={Adv. Math.},
      volume={343},
       pages={935\ndash 970},
         url={https://doi.org/10.1016/j.aim.2018.12.007},
   %   review={\MR{3892346}},
}

\bib{CD4}{article}{
  title={An extension of the pairing theory between divergence-measure fields and BV functions},
  author={Crasta, G.},
  author={De Cicco, V.},
  journal={J. Funct. Anal.},
  volume={276},
  number={8},
  pages={2605--2635},
  year={2019},
  publisher={Elsevier}
}

\bib{CD5}{article}{
author={Crasta, {G.}},
      author={De~Cicco, {V.}},
       title={On the variational nature of the Anzellotti pairing},
note={ArXiv:2207.06469},
url={https://doi.org/10.48550/arXiv.2302.10592}
}

\bib{CDM}{article}{
      author={Crasta, {G.}},
      author={De~Cicco, {V.}},
      author={Malusa, {A.}},
       title={Pairings between bounded divergence-measure vector fields and
  {BV} functions},
        date={2022},
     journal={Adv. Calc. Var.},
       pages={787-810},
             volume={15},
            number={4},
         url={https://doi.org/10.1515/acv-2020-0058},
}

\bib{MR3939259}{article}{
   author={De Cicco, V.},
   author={Giachetti, D.},
   author={Segura de Le\'{o}n, S.},
   title={Elliptic problems involving the 1-Laplacian and a singular lower
   order term},
   journal={J. Lond. Math. Soc. (2)},
   volume={99},
   date={2019},
   number={2},
   pages={349--376},
  % issn={0024-6107},
 %  review={\MR{3939259}},
   doi={10.1112/jlms.12172},
}

\bib{DGMM}{article}{
      author={Degiovanni, {M.}},
      author={Marzocchi, {A.}},
      author={Musesti, {A.}},
       title={Cauchy fluxes associated with tensor fields having divergence
  measure},
        date={1999},
  %      ISSN={0003-9527},
     journal={Arch. Ration. Mech. Anal.},
      volume={147},
      number={3},
       pages={197\ndash 223},
         url={http://dx.doi.org/10.1007/s002050050149},
    %  review={\MR{1709215}},
}

\bib{Falconer}{book}{
      author={Falconer, {K.}},
       title={Fractal geometry},
     edition={Third},
   publisher={John Wiley \& Sons, Ltd., Chichester},
        date={2014},
    %    ISBN={978-1-119-94239-9},
        note={Mathematical foundations and applications},
    %  review={\MR{3236784}},
}

\bib{FED}{book}{
      author={Federer, H.},
       title={Geometric measure theory},
      series={Die Grundlehren der mathematischen Wissenschaften, Band 153},
   publisher={Springer-Verlag New York Inc., New York},
        date={1969},
     % review={\MR{MR0257325 (41 \#1976)}},
}

\bib{GMS1}{book}{
      author={Giaquinta, {M.}},
      author={Modica, {G.}},
      author={Sou{\v{c}}ek, {J.}},
       title={Cartesian currents in the calculus of variations. {I}},
      series={Ergebnisse der Mathematik und ihrer Grenzgebiete. 3. Folge. A
  Series of Modern Surveys in Mathematics [Results in Mathematics and Related
  Areas. 3rd Series. A Series of Modern Surveys in Mathematics]},
   publisher={Springer-Verlag},
     address={Berlin},
        date={1998},
      volume={37},
      %  ISBN={3-540-64009-6},
        note={Cartesian currents},
    %  review={\MR{1645086 (2000b:49001a)}},
}

\bib{MR2348842}{article}{
   author={Kawohl, B.},
   author={Schuricht, F.},
   title={Dirichlet problems for the 1-Laplace operator, including the
   eigenvalue problem},
   journal={Commun. Contemp. Math.},
   volume={9},
   date={2007},
   number={4},
   pages={515--543},
  % issn={0219-1997},
  % review={\MR{2348842}},
   doi={10.1142/S0219199707002514},
}

\bib{ComiLeo}{article}{
author={Leonardi, G. P.},
author = {Comi, G. E.},
title={The prescribed mean curvature equation with measure data},
note={ArXiv:2302.10592},
url={https://doi.org/10.48550/arXiv.2207.06469}
}

\bib{LeoSar}{article}{
      author={Leonardi, G. P.},
      author={Saracco, G.},
       title={The prescribed mean curvature equation in weakly regular
  domains},
        date={2018},
     %   ISSN={1021-9722},
     journal={NoDEA Nonlinear Differential Equations Appl.},
      volume={25},
      number={2},
       pages={Art. 9, 29},
         url={https://doi.org/10.1007/s00030-018-0500-3},
    %  review={\MR{3767675}},
}

\bib{LeoSar2}{article}{
   author={Leonardi, G. P.},
   author={Saracco, G.},
   title={Rigidity and trace properties of divergence-measure vector fields},
   journal={Adv. Calc. Var.},
   volume={15},
   date={2022},
   number={1},
   pages={133--149},
  % issn={1864-8258},
   %review={\MR{4385590}},
   doi={10.1515/acv-2019-0094},
}

\bib{Mattila}{book}{
      author={Mattila, {P.}},
       title={Geometry of sets and measures in {E}uclidean spaces},
      series={Cambridge Studies in Advanced Mathematics},
   publisher={Cambridge University Press, Cambridge},
        date={1995},
      volume={44},
     %   ISBN={0-521-46576-1; 0-521-65595-1},
         url={https://doi.org/10.1017/CBO9780511623813},
        note={Fractals and rectifiability},
    %  review={\MR{1333890}},
}

\bib{MR2502520}{article}{
   author={Mercaldo, A.},
   author={Segura de Le\'{o}n, S.},
   author={Trombetti, C.},
   title={On the solutions to 1-Laplacian equation with $L^1$ data},
   journal={J. Funct. Anal.},
   volume={256},
   date={2009},
   number={8},
   pages={2387--2416},
 %  issn={0022-1236},
 %  review={\MR{2502520}},
   doi={10.1016/j.jfa.2008.12.025},
}

\bib{MR3501836}{article}{
   author={Scheven, C.},
   author={Schmidt, T.},
   title={BV supersolutions to equations of 1-Laplace and minimal surface
   type},
   journal={J. Differential Equations},
   volume={261},
   date={2016},
   number={3},
   pages={1904--1932},
  % issn={0022-0396},
  % review={\MR{3501836}},
   doi={10.1016/j.jde.2016.04.015},
}

\bib{MR3813962}{article}{
   author={Scheven, C.},
   author={Schmidt, T.},
   title={On the dual formulation of obstacle problems for the total
   variation and the area functional},
   journal={Ann. Inst. H. Poincar\'{e} C Anal. Non Lin\'{e}aire},
   volume={35},
   date={2018},
   number={5},
   pages={1175--1207},
 %  issn={0294-1449},
  % review={\MR{3813962}},
   doi={10.1016/j.anihpc.2017.10.003},
}

\bib{Schu}{article}{
      author={Schuricht, {F.}},
       title={A new mathematical foundation for contact interactions in
  continuum physics},
        date={2007},
   %     ISSN={0003-9527},
     journal={Arch. Ration. Mech. Anal.},
      volume={184},
      number={3},
       pages={495\ndash 551},
         url={http://dx.doi.org/10.1007/s00205-006-0032-6},
    %  review={\MR{2299760}},
}

\bib{Silh}{article}{
      author={\v{S}ilhav\'{y}, {M.}},
       title={Divergence measure fields and {C}auchy's stress theorem},
        date={2005},
   %     ISSN={0041-8994},
     journal={Rend. Sem. Mat. Univ. Padova},
      volume={113},
       pages={15\ndash 45},
  %    review={\MR{2168979}},
}

\bib{Silh2019}{article}{
      author={\v{S}ilhav\'{y}, {M.}},
       title={The Gauss-Green theorem for bounded vectorfields with
  divergence measure on sets of finite perimeter},
     journal={Indiana Univ. Math. J.},
   volume={72},
   date={2023},
   number={1},
   pages={29--42},
}

\end{biblist}
\end{bibdiv}
\end{document}